\numberwithin{equation}{section}
\newtheorem{theo}{Theorem}[section]
\newtheorem{ass}{Assumption}[section]
\newtheorem{lma}{Lemma}[section]
\newtheorem{pro}{Proposition}[section]
\newcommand{\abs}[1]{\lv #1\rv}
\newcommand{\bo}{\beta_{\sste 0}}
\newcommand{\bun}{\beta_{\sste 1}}
\newcommand{\cal}{\mathcal}
\newcommand{\cov}{{\mrm{Cov\,}}}
\newcommand{\dy}{\,{\mrm{d}}y}
\newcommand{\dz}{\,{\mrm{d}}z}
\newcommand{\dt}{\,{\mrm{d}}t}
\newcommand{\dv}{\,{\mrm{d}}v}
\newcommand{\dw}{\,{\mrm{d}}w}
\newcommand{\dn}{\delta_n}
\newcommand{\dsp}{\displaystyle}
\newcommand{\esp}{\mathds{E}}
\newcommand{\fa}{\forall\,}
\newcommand{\h}{\widehat{\beta}_n^{(p)}}
\newcommand{\indi}{\mathds{1}}
\newcommand{\p}{\mathds{P}}
\newcommand{\intd}{\int\!\!\!\int}
\renewcommand{\ll}{\mathds{L}}
\newcommand{\lv}{\left\vert}
\newcommand{\mrm}{\mathrm}
\newcommand{\n}{{\mathds{N}}}
\newcommand{\ra}{r^{\ast}}
\newcommand{\reel}{{\mathds{R}}}
\newcommand{\ro}{r_{\sste 0}}
\newcommand{\rr}{\overline r}
\newcommand{\rv}{\right\vert}
\newcommand{\sste}{\scriptscriptstyle}
\newcommand{\tv}{\xrightarrow}
\newcommand{\var}{{\mrm{Var\,}}}
\newcommand{\ve}{\varepsilon}
\newcommand{\vone}{{\mathbf 1}}
\newcommand{\vp}{\varphi}
\newcommand{\vu}{{\mathbf u}}
\newcommand{\vQ}{{\mathbf Q}}
\newcommand{\wt}{\widetilde}
\def\as{^*}
\def\bN{{\mathbb N}}
\def\T{^{\sste\top}}
\def\tX{\widetilde X}
\def\tXr{\widetilde X_r}
\newcommand{\tta}[2]{\( \begin{matrix} #1 \\[-5pt] {{\scriptstyle (#2)}} \end{matrix} \)}
\numberwithin{figure}{section}
\numberwithin{table}{section}
\title[Global smoothness estimation of a Gaussian process]{Global smoothness estimation of a Gaussian process from regular sequence designs}
\author[D. Blanke]{Delphine Blanke}
\address{Avignon Universit\'{e}, LMA EA2151, 33 rue Louis Pasteur, F-84000 Avignon,~France.}
\email{delphine.blanke@univ-avignon.fr}
\author[C. Vial]{C\'{e}line Vial}
\address{Universit\'{e} de Lyon, CNRS UMR 5208, Polytech Lyon-Universit\'{e} de Lyon 1, Institut Camille Jordan,
43 blvd du 11 novembre 1918, 69622 Villeurbanne Cedex, France.}
\email{celine.vial@univ-lyon1.fr}
\begin{document}

\begin{abstract}

We consider a real Gaussian process $X$ having a global unknown smoothness $(\ro,\bo$), $\ro\in \n_0$ and $\bo \in]0,1[$, with $X^{(\ro)}$ (the mean-square derivative of $X$ if $\ro\ge 1$) supposed to be locally stationary with index $\bo$. From the behavior of quadratic variations built on divided differences of $X$, we derive an estimator of $(\ro,\bo)$ based on  -~not necessarily equally spaced~- observations of $X$. Various numerical studies of these estimators exhibit their properties for finite sample size and different types of processes, and  are also completed by two examples of application to real data.

\end{abstract}

\keywords{Inference for Gaussian processes; Locally stationary process; Divided differences; Quadratic variations}

\maketitle

\section{Introduction}
In many areas assessing the regularity of a Gaussian process represents still and always an important issue. In a straightforward way, it allows to give accurate estimates for approximation or integration of sampled process. An important example is the kriging, which becomes more and more popular with growing number of applications. This method consists in interpolating a Gaussian random field observed only in few points. Estimating the covariance function is often the first step before plug this estimates in the Kriging equations, see \citet{St00}. Usually the covariance function is assumed to belong to a parametric family, where these unknown parameters are linked to the sampled path regularity: for example the power model, which corresponds to a Fractional Brownian motion. Actually, many applications make use of irregular sampling and \citet{St00} (chap. 6.9) gives an hint of how adding three points very near from the origin among the already twenty equally spaced observations improve drastically the estimation of the regularity parameter. In this paper, we defined an estimator of global regularity of a Gaussian process when the sampling design is regular, that is observation points correspond to quantile of some distribution, see section~\ref{Framework} for details. Taking into account a non uniform design is innovating regarding other existing estimators and makes sense as to the remark above.

A wide range of methods have been proposed to reconstruct a sample path from discrete observations. For
processes satisfying to the so-called  Sacks and Ylvisaker (SY) conditions, recent works include:
\citet[orthogonal projection, optimal designs]{MG96}, \citet[linear interpolation, optimal designs]{MGR97},
\citet[linear interpolation, adaptive designs]{MGR98}. Under H\"{o}lder type conditions, one may cite e.g. works of
\citet[linear interpolation]{Se96}, \citet[Hermite interpolation splines, optimal designs]{Se00}, \citet[best approximation order]{SB98}. Note that a more detailed survey may be found in the book by \citet{Rit00}. Another important topic, involving the knowledge of regularity and arising in above cited works, is the search of an optimal design. In time series context, \citet{Ca85}  analyzes three important problems (estimation of regression coefficients, estimation of random integrals and detection of signal in noise) for which he is looking for optimal design. The latter two problems involve approximations of integrals, where knowledge of process regularity is particularly important \citep[see, e.g.][]{Rit00}, we provide a detailed discussion on this topic  in section~\ref{AppInt} together with additional references. Applications of estimation of regularity can be also find in \citet{Ad90} where bounds of suprema distributions depend on the sample roughness, in \citet{Is92} where the regularity is involved in the choice of the best wavelet base in image analysis or more generally in prediction area, see \citet{Cu77,Li79,Bu85}.

Furthermore for a real stationary and non differentiable Gaussian process with covariance
$\mathds{K}(s,t)=\mathds{K}(\abs{t-s},0)$ such that  $\dsp \mathds{K}(t,0) = \mathds{K}(0,0) -  A
\abs{t}^{2\bo} + o(\abs{t}^{2\bo})$ as $\abs{t} \to 0$, the parameter  $\bo$, $0 < \bo< 1$,  is closely
related to  fractal dimension  of the sample paths. This relationship is developed in particular in the
works by \citet{Ad81} and \citet{TT91} and it gave rise to an important literature around estimation  of
$\bo$. Note that this relation can be extended, e.g. for non Gaussian process in \citet{HR94}.
The recent paper of \citet{Gneit2012} gives a review on estimator of the fractal dimension for times series and spatial data. They also provide a wide range of application in environmental science, e.g. hydrology, topography of sea floor.
Note that this paper is restricted to the case of $(n+1)$ equally spaced observations. In relation with our work, we refer especially to \citet{CH94}  for estimators based on
quadratic variations and their extensions developed by  \citet{KW97}. Still for this stationary framework, \citet{CHP95}
introduce a periodogram-type estimator whereas \citet{FHW94}  use the number of level crossings.

In this paper, our aim is to estimate the global smoothness  $(\ro,\bo)$ of a Gaussian process $X$,
supposed to be $\ro$-times differentiable (for some nonnegative integer $\ro$) where $X^{(\ro)}$ (the $\ro$-th mean-square derivative of $X$ for non-zero $\ro$) is supposed to be locally stationary with regularity $\bo$. The parameters $(\ro,\bo)$ being both unknown,  we improve the previous works in several ways:
\begin{itemize}
\item[-] not necessarily equally spaced  observations of $X$ over some finite interval $[0,T]$ are considered,
\item[-] X is not supposed to be stationary not even with stationary increments,
\item[-] X has an unknown degree of differentiability, $\ro$,  to be estimated,
\item[-] for $\ro \ge 1$, the coefficient of smoothness $\bo$  is related to the unobserved derivative $X^{(\ro)}$.
\end{itemize}
Our methodology is based  on an estimator of $\ro$, say $\widehat{\ro}$, derived from quadratic variations of divided differences of $X$ and consequently, generalize the estimator studied  by \citet{BV11} for the equidistant case. In a second step, we proceed to the estimation of $\bo$, with an estimator $\widehat{\beta}_0$ which can be viewed as a simplification of that studied,  in the case $\ro=0$,  by \citet{KW97}.  Also for processes with stationary increments and using a linear regression approach, \citet{IL97} have proposed and studied an estimator of $H=2(\ro+\bo)$ for equally spaced observations.  As far as we can judge, our two steps procedure seems to be simpler and more competitive. We obtain an upper bound for $\p(\widehat{\ro} \not = \ro)$ as well as the mean square error of $\widehat{\ro}$  and almost sure rates of convergence of $\widehat{\bo}$. Surprisingly, these almost sure rates  are comparable to those obtained in the case of $\ro$ equal to 0: by this way the preliminary estimation of $\ro$ does not affect that of $\bo$, even if $X^{(\ro)}$ is not observed. Next, in section~\ref{AppInt}, we derive theoretical and numerical results concerning the important application of approximation and integration.  We complete this work with an extensive computational study: we compare different estimators of $\ro$ and $\ro+\bo$ for processes with various kinds of smoothness, derive properties of our estimators for finite sample size,  an example of consequence of the misspecification of $\ro$ is given, and an example of process with trend is also study. To end this part, we apply our global estimation to two well-known real data sets: Roller data~\citet{La94} and Biscuit data~\citet{BFV01}.

\section{The framework}\label{Framework}

\subsection{The process and design}
We consider a Gaussian process $ X=\{X(t), \, t\in [0,T] \}$ observed at $(n+1)$ instants on $[0,T]$, $T>0$, with covariance function $\mathds{K}(s,t)=\cov(X(s),X(t))$. We shall assume the following conditions on regularity of $X$.

\begin{ass}[A\ref{h21}]\label{h21}
$X$ satisfies the following conditions.

\begin{itemize}
\item[(i)]There exists some nonnegative integer $\ro$, such that $X$ is $\ro$-times differentiable in quadratic mean, denote $X^{(\ro)}$.
\item[(ii)]The process $X^{(\ro)}$ is supposed to be locally stationary:
\begin{equation} \label{e21}
\lim_{h\to 0} \sup_{s,t\in[0,T],\\\abs{s-t} \le h,\\s\not=t} \abs{\frac{\esp\big(X^{(\ro)}(s)
-X^{(\ro)}(t)\big)^2}{\abs{s-t}^{2\bo}} - d_0(t)} = 0
\end{equation}
where $\bo\in ]0,1[$ and $d_0$ is a positive continuous function on $[0,T]$.
\item[(iii-{$ p$})]For either $p=1$ or $p=2$, $\mathds{K}^{(\ro+p,\ro+p)}(s,t)$
exists  on $[0,T]^2\big\backslash\{s=t\}$ and satisfies  for some $D_p>0$:
\begin{equation*}
\abs{\mathds{K}^{(\ro+p,\ro+p)}(s,t)} \le D_p\abs{s-t}^{-(2p- 2\bo)}.
\end{equation*}
Moreover, we suppose that  $\mu\in C^{\ro+1}([0,T])$.
\end{itemize}
\end{ass}
Note that the local stationarity makes reference to \citet{Ber74}'s sense. The condition A\ref{h21}-(i) can be translated in terms of the covariance function. In particular the function $\mathds{K}$ is continuously differentiable with derivatives $\mathds{K}^{(r,r)}(s,t)= \cov (X^{(r)}(s), X^{(r)}(t)) $, for
$r=1,\dotsc,\ro$. Also, the mean of the process $\mu(t):=\esp X(t)$ is a
$\ro$-times continuously differentiable function with $\esp X^{(r)}(t)= \mu^{(r)}(t)$, $r=0,\dotsc,\ro$. Conditions~A\ref{h21}-(iii-$p$) are more technical but classical ones when estimating regularity parameter, see \citet{CH94,KW97}.

These assumptions are satisfied by a wide range of examples, e.g. the $\ro$-fold integrated fractional Brownian motion or the Gaussian process with Mat\'ern covariance function, i.e. $\mathds{K}(t,0)=\frac{\pi^{1/2}\phi}{2^{(\nu)-1}\Gamma(\nu+1/2)}(\alpha|t|)^{\nu}K_{\nu}(\alpha|t|)$, where $K_{\nu}$, is a modified Bessel function of the second kind of order $\nu$. The latter process gets a global smoothness equal to $(\lfloor\nu\rfloor,\nu-\lfloor\nu\rfloor)$, see \citet{St00} p.31. Detailed examples, including different classes of stationary processes, can be found in~\citet{BV08,BV11}.

Note that, for processes with stationary increments, the function $d_0$ is reduced to a constant. Of course, cases with non constant $d_0(\cdot)$ are allowed as well as processes  with a smooth enough trend. In particular, for some sufficiently smooth functions $a$ and $m$ on $[0,T]$, the process $Y(t)=a(t)X(t)+m(t)$ will also fulfills Assumption~A\ref{h21}, see lemma~\ref{l61} for details.

Let us turn now to the description of observation points. We consider that the process $X$ is observed at $(n+1)$ instants, denoted by $$0=t_{0,n} < t_{1,n} < \dotsb < t_{n,n}\le T$$
where the $t_k := t_{k,n}$ form a regular sequence design. That is, they  are defined as quantiles of a fixed positive and continuous density $\psi$ on $[0,T]$:
\begin{equation*}
\int_{0}^{t_k} \psi(s) \mathrm{d}s =  \frac{k\dn}{T}, \;\;\; k=0,\dotsc,n,
\end{equation*}
for $\dn$  a positive sequence such that $\dn\to 0$ and $n\dn\to T(-)$. Clearly,  if $\psi$ is the uniform density on $[0,T]$, one gets the equidistant case. Some further assumptions on $\psi$ are needed to get some control over the $t_k$'s.
\begin{ass}[A\ref{h22}]\label{h22} The density $\psi$ satisfies:
\begin{itemize}
\item[(i)] $\dsp \inf_{t\in[0,T]} \psi(t)>0$,
\item[(ii)]  $\fa (s,t)\in [0,T]^2$, $\abs{\psi(s) - \psi(t)} \le L \abs{s-t}^{\alpha}$, for some $\alpha \in ]0,1]$.
\end{itemize}
\end{ass}

These hypothesis ensure a controlled spacing between two distinct points of observation, see Lemma~\ref{l62}. From a practical point of view, this flexibility may allow to recognize inhomogeneities in the process (e.g. presence of pics in environmental
pollution monitoring, see \citet{Gi87} and references therein) or else to describe situations where data are collected at equidistant
times but become irregularly spaced after some screening (see for example the wolfcamp-aquifer data in \citet{Cr93} p. 212).

\subsection{The methodology}
In this part, we want to give background ideas about the construction of our estimate of the global regularity $(\ro,\bo)$, when the process is observed on a non equidistant grid.
The main idea is to introduce divided differences, quantities generalizing the finite differences, studied by \citet{BV11,BV12}. Let us first recall that the unique polynomial of degree $r$ that interpolates a function $g$ at $r+1$ points $t_k,\dotsc,t_{k+ur}$ (for some positive integer $u$) can be written as:
\begin{multline}
g[t_k] + g[t_k,t_{k+u}](t-t_k) + g[t_k,t_{k+u},t_{k+2u}](t-t_k)(t-t_{k+u})  \\+  \dotsb + g[t_k,\dotsc,t_{k+ru}](t-t_k)\dotsm (t-t_{k+(r-1)u})
\label{e22}\end{multline}
where the divided differences $g[\dots]$ are defined by
$g[t_k] = g(t_k)$ and for $j=1,\dotsc,r$ (using the Lagrange's representation):
$$
g[t_k,\dotsc,t_{k+ju}] = \sum_{i=0}^j \frac{g(t_{k+iu})}{\prod_{m=0,m\not=i}^j (t_{k+iu} - t_{k+mu})}.
$$
In particular, we  write $g[t_k,\dotsc,t_{k+ru}] = \sum_{i=0}^r b_{ikr}^{(u)} g(t_{k+iu})$ with
\begin{equation} \label{e23}
b_{ikr}^{(u)} := \frac{1}{\prod_{m=0,m\not=i}^r (t_{k+iu} - t_{k+mu})}.
\end{equation}

These coefficients are of particular interest. In fact their first non-zero moments are of order $r$. We can also derive an explicit bound and  an asymptotic expansion for $b_{irk}^{(u)}$, see lemma~\ref{l63} for details.

Then, for positive integers $r$ and $u$, we  consider the $u$-dilated divided differences of order $r$ for $X$:
\begin{equation} \label{e24}
D_{r,k}^{(u)}\,X=\sum_{i=0}^{r} b_{ikr}^{(u)}X(t_{k+iu}), \;\;\; k=0,\dotsc,n- u r
\end{equation}
with $b_{ikr}^{(u)}$ defined by \eqref{e23}. Note that, if  $\psi(t) = T^{-1}\indi_{[0,T]}(t)$, the sequence of designs is equidistant, that is $t_{k,n}=k\delta_n$, and divided differences turn to be finite differences. More precisely, for the sequence $a_{i,r}= \binom{r}{i} (-1)^{r-i}$, let define the finite differences $\Delta_{r,k}^{(u)} = \sum_{i=0}^r a_{i,r} X((k+iu)\dn)$. Noticing that in the case of equally spaced observations,
$$b_{ikr}^{(u)} = \frac{(u\dn)^{-r}}{\prod_{m=0,m\not= i}^{r} (i-m)} = \frac{a_{i,r}}{r!}(u\dn)^{-r},$$
 we deduce the relation $D_{r,k}^{(u)}\,X= \frac{(u\dn)^{-r}}{r!} \Delta_{r,k}^{(u)}\,X$.
From now on, we set the $u$-dilated quadratic variations of $X$ of order $r$: $\dsp \overline{\big( D_r^{(u)}X\big)^2}=  \frac{\sum_{k=0}^{n_r} \big( D_{r,k}^{(u)}\,X\big)^2}{n_r+1}$ with $n_r := n-ur$. Construction of our estimators is based on following asymptotic properties concerning the mean behavior  of $\overline{\big( D_r^{(u)}X\big)^2}$:

\medskip
$$(P)\left\{\begin{tabular}{l}\textrm{the quantity $\esp\, \overline{\big(D_{r} ^{(u)}X\big)^2}$ is of order $(u/n)^{-2(p-\bo)}$ for $r=\ro+p$} \\
\textrm{for $p=1,2$, and gets a finite non zero limit when $r\leq \ro$.}
\end{tabular}\right.
$$

See proposition~\ref{p61} for precise results.
These results imply that a good  choice of $r$ (namely $r=\ro+1$ or $\ro+2$) could provide an estimate of $\bo$, at least with an adequate combination of $u$-dilated quadratic variations of $X$. To this end, we propose a two steps
procedure:
\begin{itemize}
\item[Step 1:]\textit{Estimation of $\ro$.}\\ Based on $D_{r,k}^{(1)}X$, we estimate $\ro$ with
\begin{equation} \label{e25}
\widehat{\ro} =\min\Big\{r\in\{2,\dotsc,m_n\} : \overline{\big( D_r^{(1)}X\big)^2}\geq
 n^2b_n\Big\}-2.
\end{equation}
If the above set is empty, we fix
$\widehat{\ro}= l_0$ for an arbitrary value $l_0\not\in {\n}_{\sste 0}$.  Here, $m_n\to\infty$ but if an upper bound $B$ is known for
$\ro$, one has to choose $m_n=B+2$. The threshold $b_n$ is a positive sequence chosen such that : $n^{-2(1-\bo)}b_n \rightarrow 0$ and $n^{2\bo}b_n\to \infty$
for all $\bo \in ]0,1[$. For example, omnibus choices are given by $b_n=(\ln n)^{\alpha}$,
$\alpha\in\reel$.

\item[Step 2:]\textit{Estimation of $\bo$.}\\
 Next, we derive two families of estimators for $\bo$, namely $\widehat{\beta}_n^{(p)}$, with either $p=1$ or
    $p=2$ and $u,v$ $(u<v)$  given integers:
$$
\widehat{\beta}_n^{(p)}:= \widehat{\beta}_n^{(p)}(u,v)= p+ \frac{1}{2}\frac{ \ln\Big(\overline{\big(
D_{\widehat{\ro}+p}^{(u)}X\big)^2}\Big) -\ln\Big( \overline{ \big( D_{\widehat{\ro}+p} ^{(v)}X\big)^2}\Big)}{\ln(u/v)}.
$$
\end{itemize}
{\rmrk[The case of $\ro=0$ with equally spaced observations] \label{RqKW}  \citet{KW97} proposed estimators of $2\bo=\alpha$ based on ordinary and generalized least squares on the logarithm of the quadratic variations versus logarithm of a vector of values $u$, more precisely
$$
\hat{\alpha}^{(p)}=\frac{(\vone\T W \vone)(\vu\T W \vQ^{(p)})-(\vone\T W \vu)(\vone\T W \vQ^{(p)})}{(\vone\T W \vone)(\vu\T W \vu)-(\vone\T W \vu)^2}
$$
where $\vone$ is the $m$-vector of $1$s, $\vu=(\ln(u),u=1,\ldots, m)\T$, $\vQ^{(p)}=(\ln(\overline{\big(\Delta_{p}^{(u)}X\big)^2})$, $u=1,\ldots, m)\T$ and $W$ is either the identity matrix $I_m$ of order $m\times m$ or a matrix depending on $(n,\bo)$ which converges to the asymptotic covariance of $n^{1/2}\big(\overline{(\Delta_{p}^{(u)}X)^2}-\esp\overline{(\Delta_{p}^{(u)}X)^2}\big)$. The ordinary least square estimator -- corresponding to $W=I_m$-- is denoted by $\widehat{\alpha}_{OLS}^{(p)}$, where $p$ is adapted to the regularity of the process (supposed to be known in their work). The choice $p=1$, with the sequence $(-1,1)$, leads to the estimator studied by \citet{CH94}. Remark that, for $(u,v)=(1,2)$, one gets $\widehat{\beta}_n^{(1)}= \widehat{\alpha}^{(0)}_\text{OLS}$ and $\widehat{\beta}_n^{(2)}= \widehat{\alpha}^{(1)}_\text{OLS}$ but, even in this equidistant case, new estimators may be derived with other choices of $(u,v)$ such as $(u,v)=(1,4)$ (which seems to perform well, see Section~\ref{sub52}).}

\section{Asymptotic results}\label{Asymptres}
In \citet{BV11}, an exponential bound is obtained for $\p( \widehat{\ro} \not= \ro)$ in the equidistant case, implying that, almost surely for $n$ large enough, $\widehat{\ro}$  is equal to $\ro$. Here, we generalize this result to regular sequence designs but also,  we complete it  with the average behavior of $\widehat{\ro}$.
\begin{theo} \label{t31}
Under Assumption A\ref{h21} (fulfilled with $p=1$ or $p=2$) and A\ref{h22}, we have
$\dsp \p(\widehat{\ro} \not= \ro) = {\cal O} \Big( \exp\big( - \vp_n(p)\big)\Big)$  and $\dsp\esp(\widehat{\ro} - \ro)^2 = {\mathcal O} \Big( m_n^3 \exp\big(- \vp_n(p) \big)\Big)$, where, for some positive constant $C_1(\ro)$, $\vp_n(p)$ is defined by
$$\vp_n(p) =  C_1(\ro)\times \begin{cases}    \dsp n\indi_{]0,\frac{1}{2}[}(\bo)+ n(\ln n)^{-1}
\indi_{\{\frac{1}{2}\}}(\bo)+n^{2-2\bo}\indi_{]\frac{1}{2},1[}(\bo) \text{ if } p=1\\ \dsp n \text{ if } p=2.\end{cases}$$
 \end{theo}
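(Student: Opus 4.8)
The plan is to reduce the event $\{\widehat{\ro}\neq\ro\}$ to large deviations of the Gaussian quadratic variations $\overline{(D_r^{(1)}X)^2}$ about the means supplied by property $(P)$ (Proposition~\ref{p61}). Writing $r^{\ast}=\min\{r\in\{2,\dots,m_n\}:\overline{(D_r^{(1)}X)^2}\ge n^2 b_n\}$, one has $\{\widehat{\ro}\neq\ro\}\subseteq A_n\cup B_n$, with the failure-to-cross event $A_n=\{\overline{(D_{\ro+2}^{(1)}X)^2}<n^2 b_n\}$ (which also covers the case where the defining set is empty and $\widehat{\ro}=l_0$) and the premature-crossing event $B_n=\bigcup_{r=2}^{\ro+1}\{\overline{(D_r^{(1)}X)^2}\ge n^2 b_n\}$. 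Since $\ro$ is a fixed integer, $B_n$ is a union of at most $\ro+1$ events, so a union bound reduces matters to a fixed number of one-sided tails. Proposition~\ref{p61} then turns each tail into a genuine deviation from the mean: for $A_n$ the target $n^2 b_n$ sits a factor $n^{2-2\bo}/b_n\to\infty$ below $\esp\overline{(D_{\ro+2}^{(1)}X)^2}\asymp n^{4-2\bo}$, while for the terms of $B_n$ it sits a factor $n^{2\bo}b_n\to\infty$ (for $r=\ro+1$) or $\asymp n^2 b_n$ (for $r\le\ro$) above the mean; the threshold conditions $n^{-2(1-\bo)}b_n\to0$ and $n^{2\bo}b_n\to\infty$ are exactly what produce these separations.

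Each tail concerns a Gaussian quadratic form $S_r=\sum_{k=0}^{n_r}(D_{r,k}^{(1)}X)^2=\|Y_r\|^2$, where the mean part $D_{r,k}^{(1)}\mu$ is negligible for $r\ge\ro+1$ by the coefficient bounds of Lemma~\ref{l63} together with $\mu\in C^{\ro+1}$, and $Y_r$ has covariance matrix $\Sigma_r=(\cov(D_{r,k}^{(1)}X,D_{r,\ell}^{(1)}X))_{k,\ell}$. I would apply the standard exponential (Hanson--Wright / Laurent--Massart) bound $\p(|S_r-\esp S_r|>t)\le 2\exp(-c\min(t^2/\norm{\Sigma_r}_F^2,\,t/\lambda_{\max}(\Sigma_r)))$, whose three ingredients are $\operatorname{tr}\Sigma_r=\esp S_r$ (Proposition~\ref{p61}), $\norm{\Sigma_r}_F^2=\sum_{k,\ell}\cov(D_{r,k}^{(1)}X,D_{r,\ell}^{(1)}X)^2$, and $\lambda_{\max}(\Sigma_r)$. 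Both norms are governed by the off-diagonal decay $\rho_r(j):=\cov(D_{r,k}^{(1)}X,D_{r,k+j}^{(1)}X)/\sigma_{r,k}^2$, which I would estimate from Assumption~A\ref{h21}-(iii-$p$): expanding the divided differences via Lemma~\ref{l63} and the controlled spacing of Lemma~\ref{l62}, the bound on $\mathds{K}^{(\ro+p,\ro+p)}$ gives $|\rho_{\ro+p}(j)|\lesssim |j|^{2\bo-2p}$, and by Gershgorin $\lambda_{\max}(\Sigma_r)\lesssim\sigma_{r}^2\sum_{|j|\le n}|\rho_r(j)|$.

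The way $\vp_n(p)$ emerges is then transparent, and this is where the real work lies. For $p=2$ the exponent $2\bo-2p=2\bo-4<-1$ for every $\bo\in\,]0,1[$, so $\sum_j|\rho_{\ro+2}(j)|$ and $\sum_j\rho_{\ro+2}(j)^2$ are bounded; the binding deviation is of relative order one, $t\asymp n\,\sigma_{\ro+2}^2$, with $\lambda_{\max}\asymp\sigma_{\ro+2}^2$, so the exponential term yields $\exp(-c\,t/\lambda_{\max})=\exp(-c\,n)$, i.e. $\vp_n(2)\asymp n$. For $p=1$ one only controls order-$(\ro+1)$ covariances, and the order-$(\ro+2)$ event $A_n$ must be routed through the recursion $D_{\ro+2,k}^{(1)}X\propto D_{\ro+1,k+1}^{(1)}X-D_{\ro+1,k}^{(1)}X$, so the binding quantity becomes $\sum_{|j|\le n}|\rho_{\ro+1}(j)|\asymp\sum_{|j|\le n}|j|^{2\bo-2}\asymp\max(1,n^{2\bo-1})$, which is bounded for $\bo<\tfrac12$, of order $\ln n$ at $\bo=\tfrac12$, and of order $n^{2\bo-1}$ for $\bo>\tfrac12$; carrying $t\asymp n\,\sigma_{\ro+1}^2$ through $\exp(-c\,t/\lambda_{\max})$ then gives $\vp_n(1)\asymp n/\max(1,n^{2\bo-1})$, precisely the stated three-way split. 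The main obstacle is establishing these sharp correlation-decay estimates for the divided differences and, in the long-range-dependent regime $\bo>\tfrac12$ under $p=1$, correctly placing the deviation in the operator-norm (exponential) regime rather than the Frobenius (Gaussian) one, so that the weaker rate $n^{2-2\bo}$ is obtained and not over-claimed; the premature-crossing tails in $B_n$, by contrast, are comfortably faster ($\exp(-c\,n^{1+2\bo}b_n)$ or $\exp(-c\,n^2 b_n)$) and never bind.

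For the mean-square error I would bound $\p(\widehat{\ro}=v)$ uniformly in $v$: for every $v>\ro$ the event $\{\widehat{\ro}=v\}$ forces non-crossing at $\ro+2$, hence $\p(\widehat{\ro}=v)\le\p(A_n)={\cal O}(\exp(-\vp_n(p)))$, while for $v<\ro$ (and for the default $v=l_0$) it forces one of the finitely many premature crossings, again ${\cal O}(\exp(-\vp_n(p)))$. Then $\esp(\widehat{\ro}-\ro)^2=\sum_{v}(v-\ro)^2\p(\widehat{\ro}=v)\le{\cal O}(\exp(-\vp_n(p)))\sum_{v=0}^{m_n}(v-\ro)^2={\cal O}(m_n^3\exp(-\vp_n(p)))$, the factor $m_n^3$ arising from $\sum_{v\le m_n}(v-\ro)^2\asymp m_n^3$.
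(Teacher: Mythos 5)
Your proposal is correct and follows essentially the same route as the paper: your $A_n$/$B_n$ split is exactly the paper's $T_{2n}/T_{1n}$ decomposition, your Hanson--Wright bound is the paper's Proposition~\ref{p62}, your correlation-decay estimates reproduce Lemma~\ref{l64} (including the $\bo=\tfrac12$ and $\bo=\tfrac34$ thresholds and the fact that the operator-norm term binds for $p=1$, $\bo>\tfrac12$), and the $m_n^3$ factor comes from the same weighted union bound. The only blemish is a harmless slip in your final paragraph, where you say $\widehat{\ro}=l_0$ forces a premature crossing; as your own first paragraph correctly states, it forces failure-to-cross at order $\ro+2$, so $\p(\widehat{\ro}=l_0)\le\p(A_n)$, and the stated bound is unchanged.
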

Remark that one may choose $m_n$ tending to infinity arbitrary slowly. Indeed, the unique restriction is that $\ro$ belongs to the grid $ \{1,\dotsc,m_n\}$ for $n$ large enough. From a practical point of view, one may choose a preliminary fixed bound $B$, and, in the case where the estimator return the non-integer value $l_0$, replace $B$ by $B'$ greater than  $B$.

\bigskip

The bias of $\h$ will be controlled by a second-order condition of local stationarity, more specifically  we
have to strengthen the relation \eqref{e21} in:
\begin{equation} \label{e31}
\lim_{h\to 0} \sup_{\substack{s,t\in[0,T],\\\abs{s-t} \le h,\\s\not=t}} \abs{\,
\abs{s-t}^{-\bun}\Big(\frac{\esp\big(X^{(\ro)}(s)
-X^{(\ro)}(t)\big)^2}{\abs{s-t}^{2\bo}} - d_0(t)\Big)-d_1(t)} = 0
\end{equation}
for a positive $\bun$ and continuous function $d_1$.

\begin{theo}\label{t32} If relation \eqref{e31}, Assumption A\ref{h21} with $p=1$ or $p=2$, and A\ref{h22} are fulfilled, we obtain
$$\limsup_{n\to\infty} V_n^{(p)} \abs{\h - \bo} \le C_1(p)\;\;\;\text{a.s.}$$
where $C_1(p)$ is some positive constant and
\begin{align*}
V_n^{ (1) }&= \min\Big(n^{\bun},\sqrt{\frac{n}{\ln n}} \indi_{]0,\frac{3}{4}[}(\bo)
+\frac{\sqrt{n}}{\ln n}\indi_{\{\frac{3}{4}\}}(\bo)+ \frac{n^{2(1-\bo)}}{\ln
n}\indi_{]\frac{3}{4},1[}(\bo)\Big),\\
V_n^{ (2) }&= \min\Big(n^{\bun},\sqrt{\frac{n}{\ln n}} \Big).
\end{align*}
\end{theo}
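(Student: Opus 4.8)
The plan is to reduce to the case of a known order and then split the error into a deterministic bias and a stochastic fluctuation. First, Theorem~\ref{t31} yields $\sum_n \p(\widehat\ro\neq\ro)<\infty$, so by Borel--Cantelli, almost surely $\widehat\ro=\ro$ for all $n$ large enough; on this event $\h$ coincides with the same statistic built from the true order $\ro$, and it suffices to control the latter. Writing $A_w:=\overline{\big(D_{\ro+p}^{(w)}X\big)^2}$ for $w\in\{u,v\}$, I would use the exact decomposition
$$\h-\bo=\underbrace{\Big(p+\frac{\ln\esp A_u-\ln\esp A_v}{2\ln(u/v)}-\bo\Big)}_{=:B_n}+\underbrace{\frac{1}{2\ln(u/v)}\Big(\ln\frac{A_u}{\esp A_u}-\ln\frac{A_v}{\esp A_v}\Big)}_{=:F_n},$$
so that $V_n^{(p)}\abs{\h-\bo}\le V_n^{(p)}\abs{B_n}+V_n^{(p)}\abs{F_n}$, with the two terms matched separately against the two entries of the $\min$ defining $V_n^{(p)}$.

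For the bias $B_n$ I would feed Proposition~\ref{p61} together with the second-order local stationarity \eqref{e31}. The leading term of $\esp A_w$ is of the form $c_0\,w^{-2(p-\bo)}(n/T)^{2(p-\bo)}(1+o(1))$, and the factor $c_0\,(n/T)^{2(p-\bo)}$, being independent of $w$, cancels in $\ln\esp A_u-\ln\esp A_v$, so the leading part of $B_n$ vanishes identically. Relation \eqref{e31} governs the first correction and gives $B_n={\cal O}(n^{-\bun})$, the contribution of the trend $\mu\in C^{\ro+1}$ being absorbed here via Lemma~\ref{l61}. Since $V_n^{(p)}\le n^{\bun}$ by construction, $V_n^{(p)}\abs{B_n}$ stays bounded.

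The heart of the matter is the fluctuation $F_n$. A first-order Taylor expansion of the logarithm reduces $F_n$ to $\tfrac{1}{2\ln(u/v)}\big(\tfrac{A_u-\esp A_u}{\esp A_u}-\tfrac{A_v-\esp A_v}{\esp A_v}\big)$ up to a quadratically smaller remainder, valid once $A_w/\esp A_w\to1$ a.s. Now $A_w$ is a quadratic form in the centred Gaussian vector $(D_{\ro+p,k}^{(w)}X-\esp D_{\ro+p,k}^{(w)}X)_k$ plus a Gaussian linear term, so I would compute $\var(A_w)$ through $\sum_{k,\ell}\cov\big(D_{\ro+p,k}^{(w)}X,D_{\ro+p,\ell}^{(w)}X\big)^2$, bounding each covariance by Assumption~A\ref{h21}-(iii-$p$) and the spacing and coefficient estimates of Lemmas~\ref{l62}--\ref{l63}. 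The normalized covariances $r_{k\ell}$ decay like $\abs{k-\ell}^{2(\bo-p)}$, hence $\sum_\ell r_{k\ell}^2$ behaves like $\sum_j\abs{j}^{4(\bo-p)}$, which is summable exactly when $\bo<p-\tfrac14$: for $p=1$ this is the threshold $\bo=3/4$, while for $p=2$ it always holds. Consequently $\var(A_w/\esp A_w)$ is of order $n^{-1}$ for $\bo<3/4$ (with a $\ln n$ correction at $\bo=3/4$) and of order $n^{-4(1-\bo)}$ for $\bo>3/4$ when $p=1$, and always of order $n^{-1}$ when $p=2$ --- precisely the dichotomy in $V_n^{(p)}$. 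The almost sure rate then follows from a concentration inequality for Gaussian quadratic forms (of Hanson--Wright type) combined with Borel--Cantelli, the summability over $n$ producing the logarithmic factor: a Gaussian-tail $\sqrt{\ln n}$ in the short-range regime and the heavier $\ln n$ governed by the largest eigenvalue in the long-range regime.

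I expect the main obstacle to be this fluctuation analysis in the long-range-dependent regime $\bo>3/4$ (for $p=1$): pinning down the exact variance order $n^{-4(1-\bo)}$ and the correct logarithmic factor requires a uniform summation of the squared covariances of the divided differences on a \emph{non-equidistant} grid, where the coefficients $b_{ikr}^{(w)}$ and the spacings $t_{k+iw}-t_{k+mw}$ vary with $k$ and must be handled through Lemmas~\ref{l62}--\ref{l63}. The remaining steps --- the Taylor reduction of $F_n$, the bias bound, and the final assembly of $V_n^{(p)}\abs{B_n}+V_n^{(p)}\abs{F_n}$ against the $\min$ --- are comparatively routine once these estimates are secured.
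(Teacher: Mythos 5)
Your proposal is correct and follows essentially the same route as the paper's proof: reduction to the true order via Theorem~\ref{t31} and Borel--Cantelli, a deterministic bias term controlled at order $n^{-\bun}$ through the second-order condition \eqref{e31}, and a stochastic fluctuation term handled by a Hanson--Wright-type exponential bound (Proposition~\ref{p62}) together with the covariance-sum dichotomy at $\bo=3/4$ of Lemma~\ref{l64}, followed by Borel--Cantelli to get the almost sure rate matched against the $\min$ in $V_n^{(p)}$. The only cosmetic differences are that the paper centers the quadratic variations at a deterministic sequence $l_n(p,\ro,\bo)$ rather than at $\esp\,\overline{\big(D_{\ro+p}^{(w)}X\big)^2}$ (avoiding your logarithmic Taylor step), and that the trend is absorbed through the linear term $v_n(r)$ in Proposition~\ref{p62} and the Taylor expansion of $\mu$, not via Lemma~\ref{l61}.
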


{\rmrk[Rates of convergence with equally spaced observations] For stationary gaussian processes, \citet{KW97} give the mean square error and convergence in distribution of their estimator described in remark~\ref{RqKW}.  They obtained the same rate up to a logarithmic order, due here to almost sure convergence, for both families $p=1$ and $p=2$. The asymptotic distribution is either of Gaussian or of Rosenblatt type depending on $\bo$ less or greater than $3/4$. \citet{IL97} introduced an estimator of $H=2(\ro+\bo)$ (for stationary increment processes)  with a global linear regression approach, based on an asymptotic equivalent of the quadratic variation and using some adequate family of sequences. For $\ro=0$, their approach matches with the previous one, with $p=1$, in using dilated sequence of type $a_{jr}= \binom{r}{j} (-1)^{r-j}$. Assuming a known upper bound on $\ro$, they derived convergence in distribution to a centered Gaussian variable with rate depending on $\bo$--root-$n$ for $\bo\leq3/4$ and $n^{1/2-\alpha(2\bo-3/2)}$, where $\delta_n=n^{-\alpha}$ and $\alpha<1$, for $\bo>3/4$. In this last case, to obtain a gaussian limit they have to assume that $\ro$ is known, the observation interval is no more bounded, and the rate of convergence is lower than root-$n$.}

\section{Approximation and integration }\label{AppInt}
\subsection{Results for plug-in estimation}

A classical and interesting topic is  approximation and/or integration  of a sampled path. An extensive literature may be found on these topics with a detailed overview in the recent monograph of \citet{Rit00}. The general framework is as follows : let
$ X=\{X_t, \, t\in [0,1] \}$,  be observed at sampled times $t_{0,n},\dotsc,t_{n,n}$ over
[a,b], more simply denoted by $t_{0},\dotsc,t_{n}$. Approximation  of $X(\cdot)$ consists in interpolation of the  path on $[a,b]$,  while weighted integration is the calculus of $\mathcal{I}_{\rho}=\int_a^b X(t) \rho(t) \dt$ for some positive and continuous weight function $\rho$. These problems are closely linked, see e.g. \citet{Rit00} p. 19-21. Closely to our framework of local stationary derivatives, we may refer more specifically to works of \citet{PRW04} for approximation and \citet{Be98} for integration. For sake of clarity, we give a brief summary of their obtained results. In the following, we denote by ${\cal H}(\ro,\bo)$ the family of Gaussian processes having $\ro$ derivatives in quadratic mean and $\ro$-th derivative with H\"{o}lderian regularity of order $\bo\in]0,1[$.  For
measurable  $g_i(\cdot)$,  we consider the approximation  ${\cal A}_{n, g} (t) = \sum_{i=0}^n
X(t_i) g_i(t)$ and the corresponding weighted and integrated $L^2$-error $e_{\rho}({\cal A}_{n,g})$  with $e_{\rho}^2({\cal A}_{n,g}) = \int_a^b \esp\abs{ X(t) - {\cal A}_{n, g}(t)}^2 \rho(t)\dt$. For $X\in{\cal H}(\ro,\bo)$ and known $(\ro,\bo)$, \citet{PRW04} have shown that
\begin{multline*}0< c(\ro,\bo)  \le \varliminf_{n\to\infty} n^{\ro+\bo}\inf_{g} e_{\rho}({\cal A}_{n,g})
\\ \le \varlimsup_{n\to\infty} n^{\ro+\bo} \inf_{g}  e_{\rho}({\cal A}_{n,g}) \le C(\ro,\bo)<+\infty
\end{multline*}
for equidistant sampled times $t_1,\dotsc,t_n$ and Gaussian processes defined and observed on the half-line $[0,+\infty[$. Of course, optimal choices of  functions $g_i$, giving a minimal error, depend on the unknown covariance function of $X$.

For weighted integration, the quadrature is denoted by ${\cal Q}_{n, d}= \sum_{i=0}^n
X(t_i) d_i$ with well-chosen constants $d_i$ (typically, one may take $d_i = \int_a^b g_i(t)\dt$). For known $(\ro,\bo)$, a short list of references could be:
\begin{itemize}
\item[-] \citet{SY68,SY70} with $\ro=0$ or 1, $\bo=\frac{1}{2}$ and known covariance,
\item[-] \citet{BC92} for arbitrary $\ro$ and $\bo=\frac{1}{2}$,
\item[-] \citet{St95} for stationary processes and $\ro+\bo<\frac{1}{2}$,
\item[-] \citet{Ri96} for minimal error, under Sacks and Ylvisaker's  conditions, and with arbitrary $\ro$.
\end{itemize}
Let us set $e^{2}_{\rho}({\cal
Q}_{n,d})=\esp\abs{ I_{\rho} - {\cal Q}_{n, d}}^2 $, the  mean square error of integration. In the stationary case and for known $\ro$, \citet{Be98} established the following exact behavior: If $\rho \in C^{\ro+3}([a,b])$ then for some given quadrature ${\cal Q}_{n,d^{\ast}(\ro)}$ on $[a,b]$,
$$n^{\ro+\bo+\frac{1}{2}}\,  e_{\rho}({\cal Q}_{n,d^{\ast}(\ro)})\xrightarrow[n\to\infty]{}c_{\ro,\bo} (\int_a^b \rho^2(t) \psi^{-(2(\ro+\bo)+1)}(t)\dt)^{\frac{1}{2}}$$
where $\psi$ is the density relative to the regular sampling $\{t_{1},\dotsc,t_n\}$. Moreover, following \citet{Ri96}, it appears that this last result is optimal under Sacks and Ylvisaker's conditions. Finally, \citet{IL97b} have proposed a quadrature, requiring only an upper bound on $\ro$, also with an error of order $ {\cal O} \big( n^{-(\ro+\bo+\frac{1}{2})} \big)$.

\medskip\

All these results shown the importance of well estimating $\ro$ and motivate ourself to focus on plugged-in interpolators, namely those using
 Lagrange polynomial of
order estimated by $\widehat{\ro}$. More precisely, Lagrange interpolation of order  $r\ge 1$ is defined by
\begin{equation}\label{e41}
\wt{X}_r(t)=\sum_{i=0}^{r} L_{i,k,r}(t) X \big( t_{kr+i} \big),\text{ with } L_{i,k,r}(t) =
\prod_{\substack{j=0\\j\not=i}}^{r}\frac{(t- t_{kr+j})}{t_{kr+i} - t_{kr+j}},
\end{equation}
for $\dsp t\in {\cal I}_k := \Big[ t_{kr}, t_{kr+r}\Big]$, $\dsp k=0,\dotsc,\lfloor \frac{n}{r}\rfloor-2$ and $\dsp{\cal I}_{\lfloor \frac{n}{r}\rfloor-1} = \Big[ \lfloor t_{\lfloor(\frac{n}{r}\rfloor-1)r},T\Big]$.

Our plugged method will consist in the approximation given by ${\cal A}_{n,L}(t) = \wt{X}_{\max(\widehat{\ro},1)}(t)$, $t\in [0,T]$, and quadrature  by ${\cal Q}_{n,L} = \int_0^T\wt{X}_{\widehat{\ro}+1}(t)\,\rho(t)\dt$.  Indeed, Lagrange polynomials are of easy implementation and by the result of \citet{PRW04} with known $\ro$,  they reach the optimal rate of approximation, $n^{-(\ro+\bo)}$ without requiring knowledge of covariance. Our following result shows also that the associate quadrature has the expected rate $n^{-(\ro+\bo+\frac{1}{2})}$.  Indeed, in the weighted case and for $T>0$, we  obtain the following asymptotic bounds in the case of a regular design.

\begin{theo} \label{t41} Suppose that conditions A\ref{h21}(i)-(ii) and A\ref{h22} hold, choose a logarithmic order for $m_n$ in \eqref{e25} and consider a positive and continuous weight function $\rho$.
\begin{itemize}
\item[(a)] Under condition A\ref{h21}(iii-1),  we have
$$
e_{\rho} ({\rm{app}}\big(\widehat{\ro})\big) := \Big(\int_0^T \esp\abs{X(t) - \tX_{\max(\widehat{\ro},1)}(t)}^2 \rho(t) \dt\Big)^{1/2}
={\cal O}\big(n^{-(\ro+\bo)}\big),
$$

\item[(b)] if condition  A\ref{h21}(iii-2) holds:
$$
e_{\rho} ({\rm{int}}\big(\widehat{\ro})\big):= \Big(\esp\abs{\int_0^T  (X(t) - \tX_{\widehat{\ro}+1}(t) ) \rho(t) \dt}^2\Big)^{1/2} ={\cal O} \big(n^{-(\ro+\bo +\frac{1}{2})}\big).
$$
\end{itemize}
\end{theo}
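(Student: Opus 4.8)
The plan is to use the exponential bound of Theorem~\ref{t31} to replace the data-driven order $\widehat{\ro}$ by the true $\ro$ at negligible cost, reducing both claims to the corresponding estimates for a \emph{deterministic} interpolation order. Split the sample space into the good event $\{\widehat{\ro}=\ro\}$ and the bad event $\{\widehat{\ro}\neq\ro\}$. On the good event one has $\max(\widehat{\ro},1)=\max(\ro,1)$ and $\widehat{\ro}+1=\ro+1$, so $\tX_{\max(\widehat{\ro},1)}=\tX_{\max(\ro,1)}$ and $\tX_{\widehat{\ro}+1}=\tX_{\ro+1}$ there; since the integrands are nonnegative, bounding the indicator by $1$ gives for part~(a)
\begin{align*}
\int_0^T \esp\big[\abs{X(t)-\tX_{\max(\widehat{\ro},1)}(t)}^2\indi_{\{\widehat{\ro}=\ro\}}\big]\rho(t)\dt
&\le \int_0^T \esp\abs{X(t)-\tX_{\max(\ro,1)}(t)}^2\rho(t)\dt,
\end{align*}
and, with $R_0:=\int_0^T(X(t)-\tX_{\ro+1}(t))\rho(t)\dt$, the good-event part of (b) is $\esp[R_0^2\indi_{\{\widehat{\ro}=\ro\}}]\le\esp R_0^2$. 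So the whole good-event problem is the \emph{fixed-order} rates $\int_0^T\esp\abs{X-\tX_{\max(\ro,1)}}^2\rho=\cal O(n^{-2(\ro+\bo)})$ and $\esp R_0^2=\cal O(n^{-(2(\ro+\bo)+1)})$.

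For the bad event I would decouple the random-order residual from the indicator by Cauchy--Schwarz, e.g. for (a)
\begin{align*}
\esp\big[\abs{X-\tX_{\max(\widehat{\ro},1)}}^2\indi_{\{\widehat{\ro}\neq\ro\}}\big]
&\le \big(\esp\abs{X-\tX_{\max(\widehat{\ro},1)}}^4\big)^{1/2}\big(\p(\widehat{\ro}\neq\ro)\big)^{1/2},
\end{align*}
and analogously for $R$ in (b). Since $\widehat{\ro}\le m_n$ with $m_n$ of logarithmic order (with the convention that the non-integer fallback value $l_0$ triggers some fixed order, also folded into the bad event), the fourth moment can be bounded by the finite sum over the possible deterministic orders $r\le m_n$; for each such $r$ the residual is Gaussian, so its fourth moment is three times the square of its variance, and the variances are polynomial in $n$ once the Lagrange coefficients are controlled through Lemma~\ref{l62} and Lemma~\ref{l63} (the associated Lebesgue constants being $\cal O(2^{m_n})$, at most a power of $n$). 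Because the exponent in Theorem~\ref{t31} satisfies $\vp_n(p)/\ln n\to\infty$, the factor $\p(\widehat{\ro}\neq\ro)^{1/2}$ is $o(n^{-K})$ for every $K$, so the bad-event contribution is smaller than any power of $n$ and in particular negligible against both target rates.

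It remains to establish the two fixed-order rates, which is the analytic core. For (a) I would write the residual on each subinterval $\cal I_k$ in Newton form, $X(t)-\tX_{\max(\ro,1)}(t)=X[t_{kr},\dots,t_{kr+r},t]\prod_{j=0}^{r}(t-t_{kr+j})$, and bound its mean square by the product of $\esp\big(X[t_{kr},\dots,t_{kr+r},t]\big)^2$ — controlled, exactly as in Proposition~\ref{p61}, by local stationarity \eqref{e21} and A\ref{h21}(iii-1) — and of the node polynomial $\cal O(h_k^{2(r+1)})$; since $h_k\asymp r/n$ by Lemma~\ref{l62}, this yields $\cal O(h_k^{2(\ro+\bo)})$ per point and, after integration against $\rho$, the rate $n^{-2(\ro+\bo)}$, thus transporting the equidistant half-line bound of \citet{PRW04} to the present regular designs. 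For (b) I would expand
\[
\esp R_0^2=\int_0^T\int_0^T\cov\big(X(s)-\tX_{\ro+1}(s),\,X(t)-\tX_{\ro+1}(t)\big)\rho(s)\rho(t)\dd s\dt,
\]
and split the domain along the subinterval partition: the diagonal blocks, each of two-dimensional measure $\asymp h^2$ with integrand $\asymp h^{2(\ro+\bo)}$, add up over the $\asymp n/(\ro+1)$ blocks to $\asymp n^{-(2(\ro+\bo)+1)}$, producing precisely the extra half power relative to (a); the degree-$(\ro+1)$ interpolant together with $\mu\in C^{\ro+1}$ keeps the deterministic mean-bias of smaller order.

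The hard part will be the off-diagonal blocks of this last double integral: one must prove that the cross-covariances of interpolation residuals living on distinct subintervals decay fast enough that their doubly-integrated contribution does not exceed the diagonal $n^{-(2(\ro+\bo)+1)}$ term. This is exactly where the stronger second-order bound A\ref{h21}(iii-2) on $\mathds{K}^{(\ro+2,\ro+2)}$ enters — it quantifies the near-orthogonality of the high-frequency residual across blocks and forces the required cancellation — and it explains why (b) needs condition (iii-$2$) whereas (a) uses only (iii-$1$). Carrying this estimate out for a non-equidistant regular design, rather than merely invoking the stationary computation of \citet{Be98}, is the most delicate step of the proof.
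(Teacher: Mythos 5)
Your reduction to a deterministic interpolation order is exactly the paper's own argument: the split over the events $\{\widehat{\ro}=r\}$ (with the fallback value $l_0$ folded in), Cauchy--Schwarz against $\big(\p(\widehat{\ro}\neq\ro)\big)^{1/2}$, the Gaussian moment bound $\esp(Y^4)\le 3\big(\esp (Y^2)\big)^2$, and the remark that the exponential bound of Theorem~\ref{t31} beats the growth of the fixed-order moments over $r\le m_n$ when $m_n$ is logarithmic. Your Newton-form treatment of the fixed-order rate in (a) is a legitimate variant of the paper's computation, which instead expands $\esp\big(X(t)-\tX_r(t)\big)^2$ through lemma~4.1 of \citet{BV08} and then applies the H\"{o}lder-type bound \eqref{e69}; both routes give $\sup_{t}\esp\big(X(t)-\tX_{\max(\ro,1)}(t)\big)^2={\cal O}\big(\dn^{2(\ro+\bo)}\big)$, and the cost of non-equidistant nodes is absorbed by Lemmas~\ref{l62} and~\ref{l63} either way.

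The genuine gap is in part (b): the off-diagonal part of $\esp R_0^2$ is declared to be ``the most delicate step'' and then not carried out. Saying that A\ref{h21}(iii-2) ``quantifies the near-orthogonality of the residual across blocks and forces the required cancellation'' asserts the desired conclusion; it does not produce a decay rate in $\abs{k-\ell}$, and without an explicit rate you cannot conclude that the off-diagonal sum stays at or below the diagonal order $n^{-(2(\ro+\bo)+1)}$. The paper's proof of precisely this point is: for $\abs{k-\ell}\ge 2$, write $\esp\big(X(t)-\tX_{\ro+1}(t)\big)\big(X(s)-\tX_{\ro+1}(s)\big)$ via the BV08 expansion as Lagrange-weighted differences of $\ll^{(\ro,\ro)}$, then add and subtract terms so that condition (iii-2) can be applied four times --- each difference of $\ll^{(\ro,\ro)}$ being rewritten as a double integral of $\mathds{K}^{(\ro+2,\ro+2)}$, bounded by $D_2\abs{s-t}^{-(4-2\bo)}$ --- while the Lagrange weights are controlled through the reproduction identity $\sum_{i,j}L_{i,k,\bar r}(t)L_{j,\ell,\bar r}(s)(t_{k\bar r+i}-t_{k\bar r})^{r_1}(t_{\ell\bar r+j}-t_{\ell\bar r})^{r_2}=(t-t_{k\bar r})^{r_1}(s-t_{\ell\bar r})^{r_2}$, $\bar r=\ro+1$. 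This yields a cross-covariance bound ${\cal O}\big(\dn^{2(\ro+\bo+1)}\,(\abs{k-\ell}-1)^{-2(2-\bo)}\big)$; since $2(2-\bo)>1$, the inner sum over $\ell$ is $O(1)$ for each $k$, so the off-diagonal double integral is ${\cal O}\big(\dn^{2(\ro+\bo)+1}\big)$, and the adjacent blocks $\abs{k-\ell}\le 1$ follow from Cauchy--Schwarz and the bound of part (a). Producing this (or an equivalent) quantitative estimate is the actual mathematical content of part (b), and it is exactly what your proposal leaves out.
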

In conclusion, expected rates for approximation and integration are reached by plugged Lagrange piecewise polynomials. Of course if  $\ro$ is known,  this last result holds true with $\widehat{\ro}$ replaced by $\ro$.

\subsection{Simulation results}

The figure~\ref{figinterpol} is obtained using $1000$ simulated sample paths observed in equally spaced points on $[0,1]$. This figure illustrates results of approximation for different processes. The logarithm of empirical integrated mean square error (in short IMSE), i.e. $e_{1}^2(\rm{app}\big(\widehat{\ro})\big)$,  is drawn as a function of $\ln(n)$ with a range of sample size from $ 25$ to $1000$. We may notice that we obtain straight lines with slope very near to $-H=-2(\ro+\bo)$. Since the Ornstein-Uhlenbeck process is a scaled time-transformed Wiener process, intercepts are different contrary to stationary versus non-stationary continuous ARMA processes.

\medskip

\begin{figure}[!htm]
\begin{center}
{\includegraphics[width=9cm]{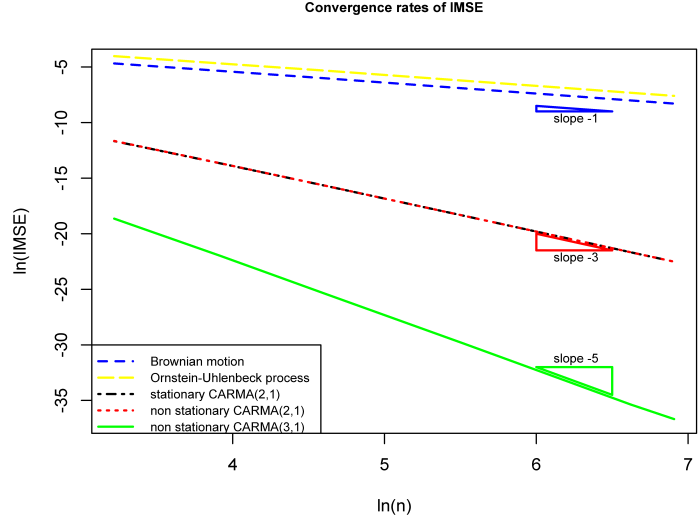}}
\end{center}
\caption{Logarithm of $e_{1}^2(\rm{app}\big(\widehat{\ro})\big)$, i.e. the IMSE in function of $\ln(n)$, for different  processes.
The dashed line corresponds to Brownian motion, long dashed line to O.U., dashed line to non stationary CARMA(2,1), dotted-dashed line to stationary CARMA(2,1) and solid line to non stationary CARMA(3,1).
The small triangles near lines are here to indicate the theoretic slope.} \label{figinterpol}
\end{figure}

%

\section{Numerical results}
In this section, to numerically compare our estimators with existing ones, we restrict ourselves to the equidistant case with the choice $\psi(t) = \frac{1}{T} \indi_{[0,T]}(t)$. As noticed before, we get  for $a_{i,r}= \binom{r}{i} (-1)^{r-i}$ and $\Delta_{r,k}^{(u)} = \sum_{i=0}^r a_{i,r} X((k+iu)\dn)$,  the relation $D_{r,k}^{(u)}\,X= \frac{(u\dn)^{-r}}{r!} \Delta_{r,k}^{(u)}\,X$ implying in turn that
\begin{equation} \label{e51}
\widehat{H}_n^{(p)} = \frac{ \ln\Big(\overline{\big(
\Delta_{\widehat{\ro}+p}^{(u)}X\big)^2}\Big) -\ln\Big( \overline{ \big( \Delta_{\widehat{\ro}+p}
^{(v)}X\big)^2}\Big)}{\ln(u/v)}
\end{equation}
is a consistent estimator of $H=2(\ro +\bo)$. All the simulation results are obtained by simulation of trajectories using two different methods : for stationary processes or with stationary increments we use the procedure described in~\citet{WC94} and for CARMA (continuous ARMA) processes, we use~\citet{TC00}. Each of them consists in $n$ equally spaced observation points on $[0,1]$ and 1000 simulated sample paths. All computations have been performed with the R software \citep{R12}.

\subsection{Results for estimators of $\ro$}

This section is dedicated to the numerical properties of two estimators of $\ro$.
We consider the estimator introduced by \citet{BV11}, derived from~\eqref{e25} in the equidistant case. An alternative, says $\tilde r_n$, based on Lagrange interpolator polynomials  was proposed by \citet{BV08}. More precisely, for $\dn=n^{-1}$ et $T=1$, $\tilde r_n$ is  defined by
\begin{multline*}
\tilde{r}_n =\min\Big\{r\in\{1,\ldots,m_n\} :
\frac{1}{r\wt{n}_r}\sum_{k=0}^{r\wt{n}_r -1} \big(X\Big( {
\frac{2k+1}{n}}\Big)-\tXr\Big({\frac{2k+1}{n}}\Big)\big)^2 \\ \geq n^{-2r} b_n\Big\}-1
\end{multline*}
where $\wt{n}_r=  \lfloor \frac{n}{2r} \rfloor$ and $\tXr(s)$ is defined for all $s \in [0,1]$ and each $r\in \{1,\dotsc,m_n\}$ in the following way : there
exist $k=0,\dotsc, \wt{n}_r-1$ such that for  $t\in
{\mathcal I}_{2k} := [\frac{2kr}{n}, \frac{2(k+1)r}{n}]$, the piecewise Lagrange interpolation of $X(t)$,  $\wt{X}_r(t)$, is given by
$\wt{X}_r(t)=\sum\limits_{i=0}^{r} L_{i,k,r}(t) X \big( (kr+i)n^{-1} \big)$, with  $L_{i,k,r}(t)=
\prod_{\substack{j=0\\j\not=i}}^{r}\frac{(t- (kr+j)n^{-1})}{(i-j)n^{-1}}$.

Both estimators use the critical value $b_n$ which is involved in detection of the jump. Here, due to convergence properties, we make the choice $b_n = (\ln n)^{-1}$. Table~\ref{tabNS} illustrates the strong convergence of both estimators and shows that this convergence is
valid even for small number of observation points $n$, up to 10 for the estimator $\widehat{\ro}$. We may noticed that, in the case of bad estimation, our estimators overestimate the number of derivatives. Remark also that, for identical sample paths, $\widehat{\ro}$ seems to be more robust than $\tilde r_n$. This behavior was expected as the latter uses only half of the observations for the detection of the jump in quadratic mean.  In these first results,  processes have fractal index $\bo$ equals to $1/2$, but alternative choices of $\bo$ are  of interest, so we consider  the fractional Brownian motion (in short fBm) and the integrated fractional Brownian motion (in short ifBm), with respectively $\ro=0$ and $\ro=1$ and various values of $\bo$.

\begin{table}[t]
\caption{{\small Value of the empirical probability that $\widehat{\ro}$ or $\tilde r_n$ equals $\ro$ or $\ro+1$
 with $n=10$ or $25$.
\label{tabNS}}}

\medskip

\centering
\begin{tabular}{|c|cc|cc|cc|}
\cline{2-7}
\multicolumn{1}{c|}{}&\multicolumn{2}{|c|}{Wiener process, $\ro=0$}&\multicolumn{2}{|c|}{CARMA(2,1), $\ro=1$}&\multicolumn{2}{|c|}{CARMA(3,1), $\ro=2$}\\
\hline
&\multicolumn{6}{|c|}{Number of equally spaced observations $n$}\\
event &10&25&10&25&10&25 \\  \hline
$\tilde r_n=\ro$&0.995&1.000&0.913&1.000&0.585&0.999\\
$\tilde r_n=\ro+1$&0.005&0.000&0.087&0.000&0.415&0.001\\\hline
$\widehat{\ro}=\ro$&1.000&1.000&1.000&1.000&0.999&1.000\\
$\widehat{\ro}=\ro+1$&0.000&0.000&0.000&0.000&0.001&0.000\\
\hline
\end{tabular}
\end{table}

Table \ref{tabfBmifBm} shows that $\widehat{\ro}$ succeeds in estimating the true regularity for $\bo$ up to 0.9. Of course the number of observations must be large enough and, even more important for large values of $\ro$ when $\bo\ge 0.95$. This latter result is clearly apparent when one compares the errors obtained for  an ifBm with $\bo=0.95$ and a fBm with $\bo=0.95$. Finally, we can see once more that $\tilde r_n$ is less robust against increasing $\bo$, whereas our simulations have shown that, for $n=2000$ and each simulated path, the estimator $\widehat{\ro}$ is able to distinguish processes with regularity $(0,0.98)$ and $(1,0.02)$, an almost imperceptible difference!

\begin{table}
\caption{{\small Value of the empirical probability that  $\widehat{\ro}$ or $\tilde r_n$ equals $\ro$  for a fractional Brownian motion or an integrated one with fractal index $2\bo$.  \label{tabfBmifBm}}}

\medskip

\centering
\scalebox{0.92}{\begin{tabular}{|c|ccccc|ccccc|}
\cline{2-11}
\multicolumn{1}{c|}{}&\multicolumn{5}{|c| }{$\widehat{\ro}=\ro$}&\multicolumn{5}{|c|}{$\tilde r_n=\ro$}\\\cline{2-11}
\multicolumn{1}{c|}{}&\multicolumn{10}{|c|}{number of equally spaced observations $n$}\\
\multicolumn{1}{c|}{}&50&100&500&1000&1200&50&100&500&1000&1200\\
\hline
fBm $\bo$&&&&&&&&&&\\
0.90&1.000&1.000&1.000&1.000&1.000&0.655&0.970&1.000&1.000&1.000\\
0.95&0.969&0.999&1.000&1.000&1.000&0.002&0.002&0.004&0.134&0.331\\
0.97&0.242&0.521&1.000&1.000&1.000&0.000&0.000&0.000&0.000&0.000\\
0.98&0.019&0.015&0.0420&0.5258&0.759&0.000&0.000&0.000&0.000&0.000\\
\hline
\hline
ifBm $\bo$&&&&&&&&&&\\
0.02&1.000&1.000&1.000&1.000&1.000&1.000&1.000&1.000&1.000&1.000
\\\hline
0.90&1.000&1.000&1.000&1.000&1.000&0.000&0.000&0.645&0.999&1.000\\
0.95&0.305&0.888&1.000&1.000&1.000&0.000&0.000&0.000&0.000&0.000\\
0.97&0.000&0.000&0.292&0.993&1.000&0.000&0.000&0.000&0.000&0.000\\
\hline
\end{tabular}}
\end{table}

\subsection{Estimation of $H$ and $\bo$}\label{sub52}
This part is dedicated to the numerical properties of estimators $\widehat H_n^{(p)}$, for $p=1$ or $2$ using the values $u=1$ and $v=4$ (giving more homogeneous results than $u=1$ and $v=2$). It ends with real data examples.

\subsubsection{Quality of estimation}
For the numerical part, we focus on the study of fBm, ifBm and, CARMA(3,1) with $\ro=2$, $\bo=0.5$.  Table~\ref{tabHp} illustrates the performance of our estimators when $\bo$, $\ro$ are increasing: we compute the empirical mean-square error from our 1000 simulated sample paths and $n=1000$ equally spaced observations are considered. It appears that, contrary to $\widehat H_n^{(2)}$,  the estimator $\widehat H_n^{(1)}$ slightly deteriorates for values of $\bo$ greater than 0.8. This result is in agreement with the rate of convergence of Theorem~\ref{t32}, that depends on $\bo$ for this estimator. The bias is negative and seems to be unsensitive to the value of $\ro$ but the mean-square error is slightly deteriorated from $\ro=0$ to $\ro=1$ in both cases. Finally, for $\bo<0.8$, $H_n^{(1)}$ seems preferable to $\widehat H_n^{(2)}$, possibly due to a lower variance of this estimator. Nevertheless, both estimators perform globally well on these numerical experiments.

\begin{table}[t]
\caption{{\footnotesize Values of mean square error and bias (between brackets) for estimators $\widehat H_n^{(p)}$, for $p=1$ or $2$ and $n=1000$.
\label{tabHp}}}

\medskip

\centering{
{\normalfont \footnotesize
\begin{tabular}{|c|ccccc|}
\cline{2-6}
\multicolumn{1}{c|}{}&\multicolumn{5}{|c|}{fBm $\bo$}\\
\multicolumn{1}{c|}{}&0.2&0.5&0.8&0.9&0.95\\ \hline
$\widehat H_n^{(1)}$&\tta{0.0017}{-0.0020}&\tta{0.0019}{-0.0045}&\tta{0.0034}{-0.0120}&\tta{0.0054}{-0.0295}&\tta{0.0065}{-0.0448}\\
$\widehat H_n^{(2)}$&\tta{0.0030}{-0.0026}&\tta{0.0039}{-0.0038}&\tta{0.0040}{-0.0057}&\tta{0.0039}{-0.0069}&\tta{0.0039}{-0.0084}\\
\hline\hline
\multicolumn{1}{c|}{}&\multicolumn{5}{|c|}{ifBm $\bo$}\\
\multicolumn{1}{c|}{}&0.2&0.5&0.8&0.9&0.95\\\hline
$\widehat H_n^{(1)}$&\tta{0.0032}{-0.0072}&\tta{0.0026}{-0.0047}&\tta{0.0041}{-0.0150}&\tta{0.0061}{-0.0342}&\tta{0.0073}{-0.0488}\\
$\widehat H_n^{(2)}$&\tta{0.0055}{-0.0106}&\tta{0.0051}{-0.0060}&\tta{0.0046}{-0.0060}&\tta{0.0044}{-0.0061}&\tta{0.0043}{-0.0061}\\
\hline\hline
\end{tabular}}
}
\end{table}

\subsubsection{Asymptotic properties}

Results of Theorem~\ref{t32} are also illustrated in Table~\ref{tabvitesse} where we have computed the regression of $\ln(\esp|\widehat{H}_n^{(p)} - H|)$ on $\ln n$  for various values of $n$ and $\esp|\widehat{H}_n^{(p)} - H|$ estimated from our 1000 simulated sample paths. As expected, the slope (corresponding to our arithmetical rate of convergence) is constant and approximatively  equal to 0.5 for $\widehat{H}_n^{(2)}$ while, for $\widehat{H}_n^{(1)}$, the decrease is apparent for high values of $\bo$. Finally, Figure~\ref{figboxplot} illustrates the behavior of the estimators $\widehat H_n^{(p)}$ with $p=1$ or $2$, for different values of the regularity parameter $\bo$. As we can see, boxplots deteriorates only slightly for $n=100$ and $250$ when $\bo$ increases from $0.5$ to $0.8$ but
the dispersion for $\widehat H_n^{(2)}$ is quite larger. For $\bo=0.95$, $\widehat H_n^{(2)}$ clearly outperforms $\widehat H_n^{(1)}$ with $n=500$ observations. Estimation appears more difficult for smaller values of $n$, but it is a quite typical behavior in our considered framework.
\begin{figure}
\vspace*{0.2cm}\begin{center}
\begin{minipage}{.45\linewidth}
\includegraphics[height=4.2cm,width=5.5cm]{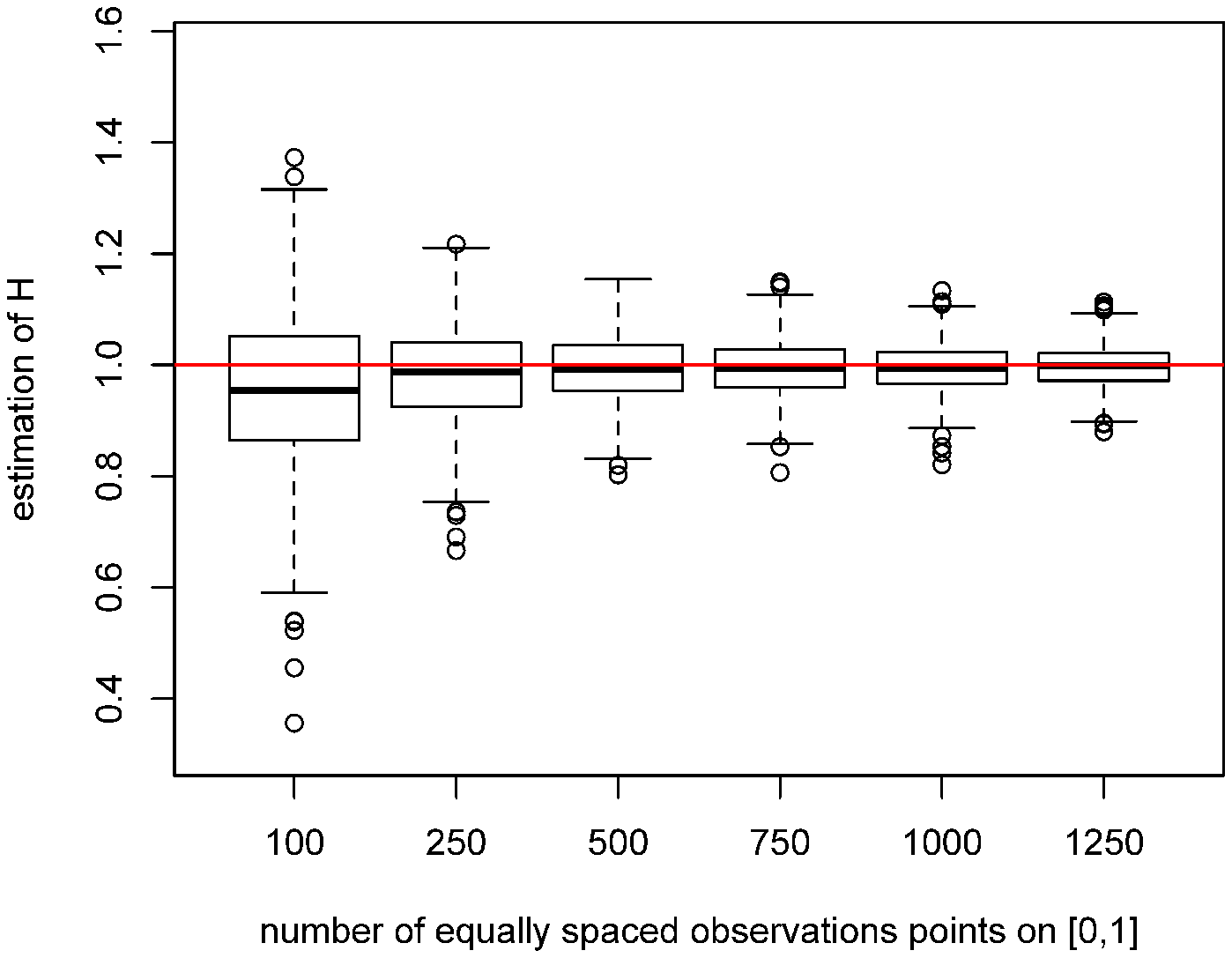}
\end{minipage}
\hspace*{0.3cm}\begin{minipage}{.45\linewidth}
{\includegraphics[height=4.2cm,width=5.5cm]{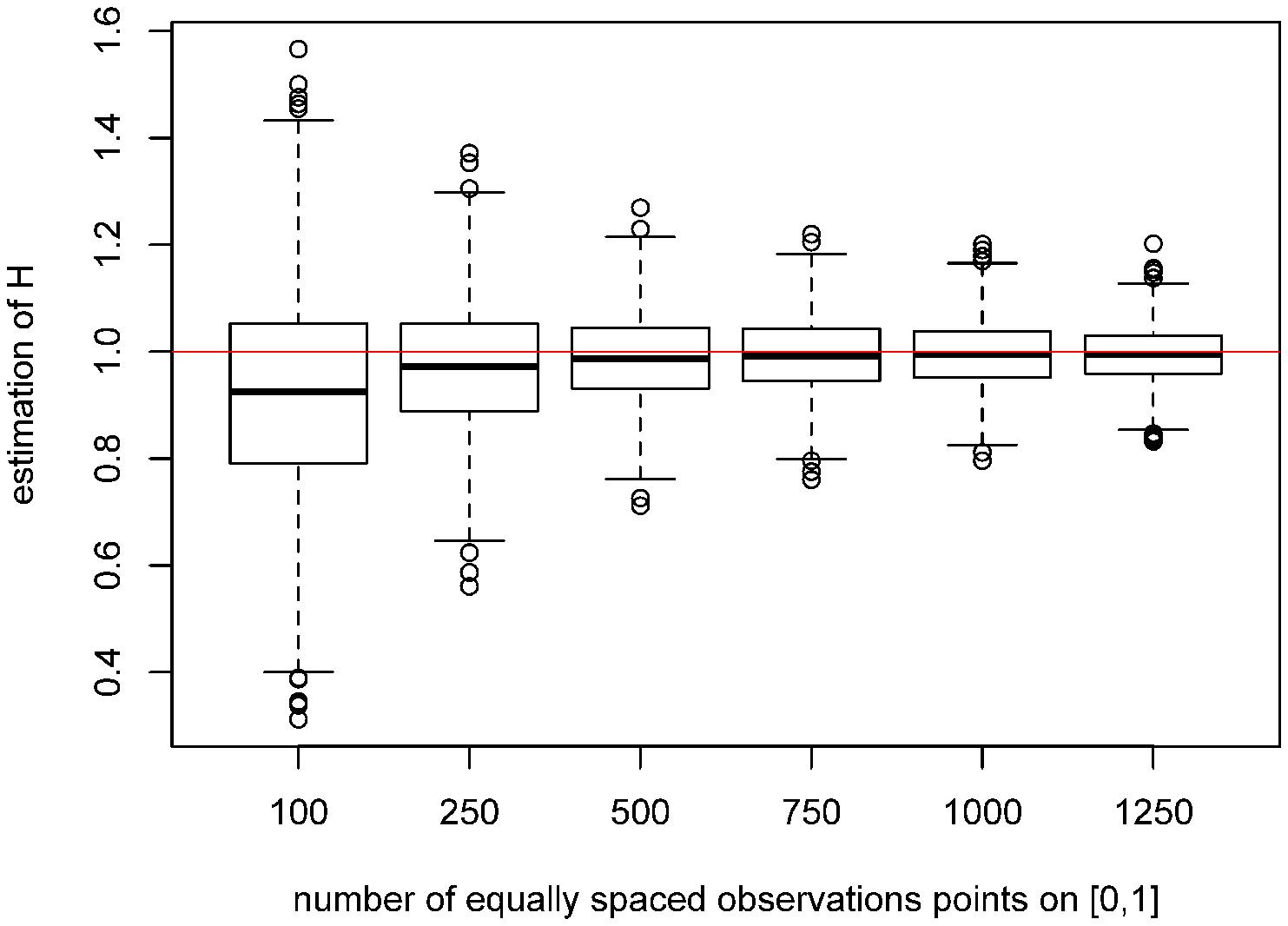}}
\end{minipage}
\\[20pt]
\begin{minipage}{.45\linewidth}
{\includegraphics[height=4.2cm,width=5.5cm]{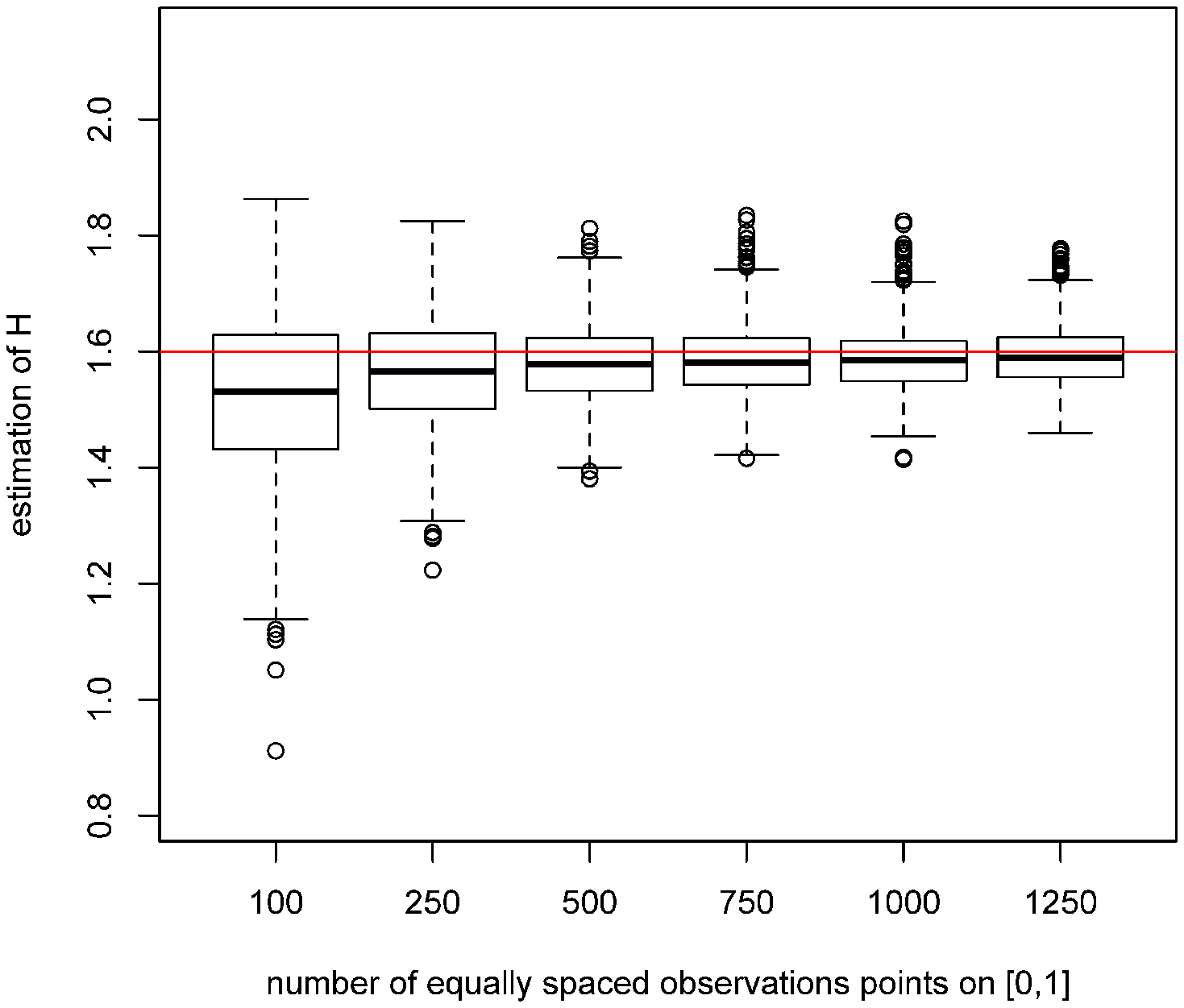}}
\end{minipage}
\hspace*{0.3cm}\begin{minipage}{.45\linewidth}
{\includegraphics[height=4.2cm,width=5.5cm]{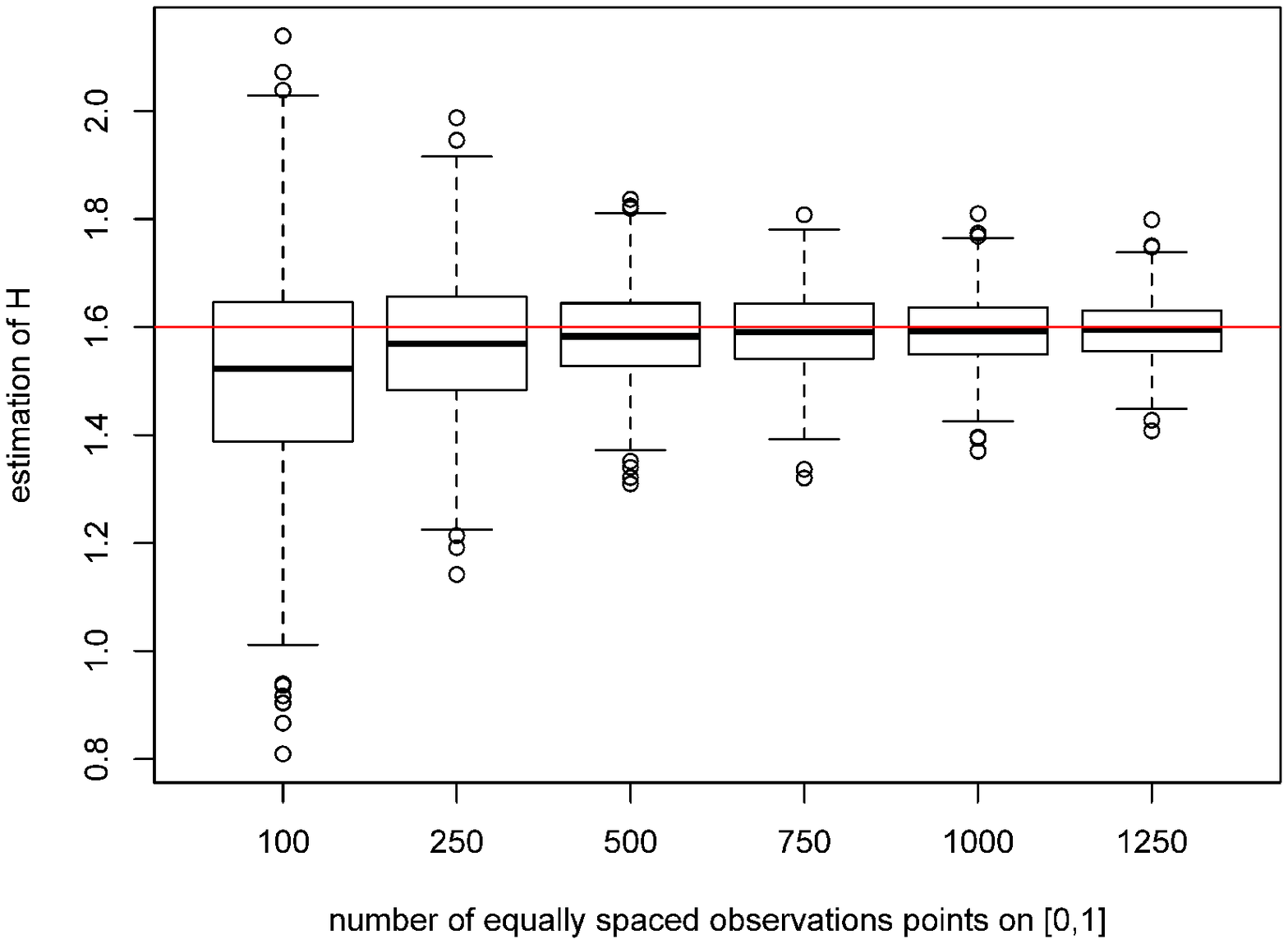}}
\end{minipage}
\\[20pt]
\begin{minipage}{.45\linewidth}
\includegraphics[height=4.2cm,width=5.5cm]{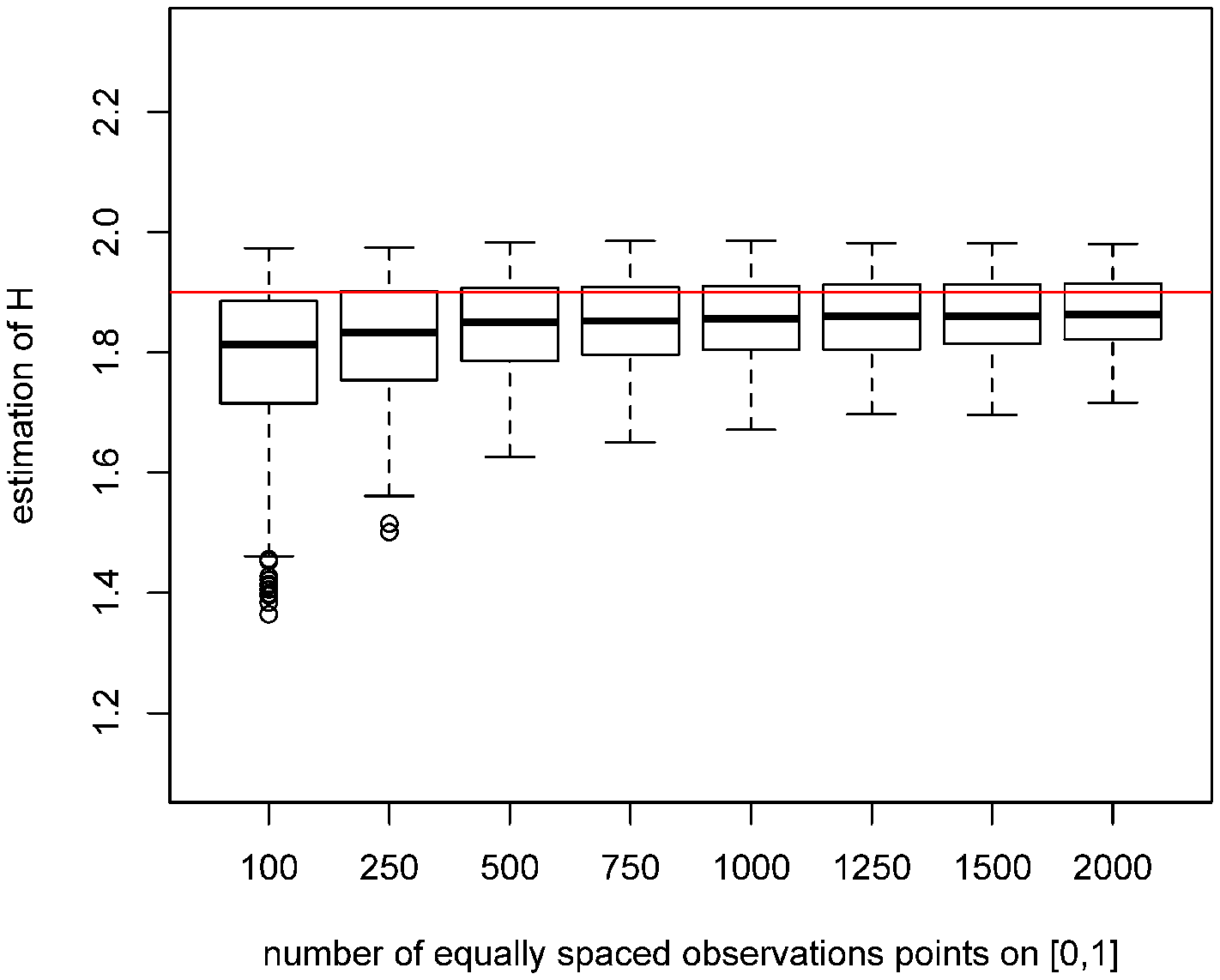}
\end{minipage}
\hspace*{0.3cm}\begin{minipage}{.45\linewidth}
{\includegraphics[height=4.2cm,width=5.5cm]{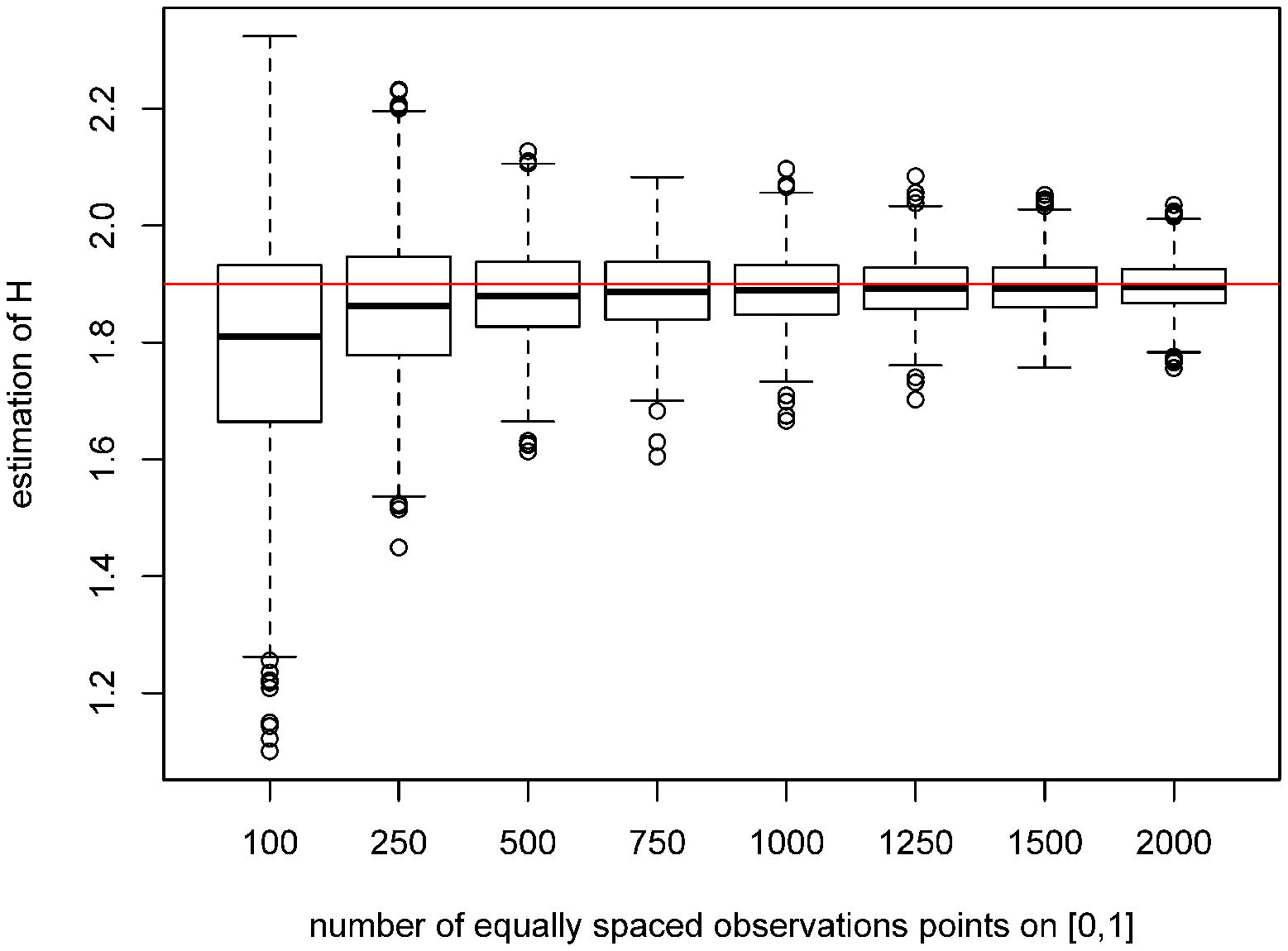}}
\end{minipage}
\vskip 5pt
$\widehat{H}^{(1)}_n$\hskip 4cm$\widehat{H}^{(2)}_n$
\end{center}
\caption{Each boxplot corresponds to 1000 estimations of $H$ by $\widehat{H}^{(1)}_n$ on the left and  $\widehat{H}^{(2)}_n$ on the right of the graph. Each realization consists in $n$ equally spaced observations on $[0,1]$ of a fBm with $\bo=0.5$ (top), $\bo=0.8$ (middle), $\bo=0.95$ (bottom), where $n=100,250,500,750,1000,1250,1500,2000$. The solid line corresponds to the real value of $H$.}\label{figboxplot}
\end{figure}

\begin{table}
\caption{{\small Rates of convergence illustrated by linear regression for $n$ in $\{500,750,1000,1250\}$.}
\label{tabvitesse}}

\medskip

\centering
{\begin{tabular}{|r|cc|cc|}
\cline{2-5}
\multicolumn{1}{c|}{}&\multicolumn{2}{|c|}{$\widehat{H}^{(1)}_n$}&\multicolumn{2}{|c|}{$\widehat{H}^{(2)}_n$}\\\cline{2-5}
\multicolumn{1}{c|}{}&slope&$R^2$&slope&$R^2$\\
\hline
fBm $\bo=0.5$&-0.488&0.998&-0.489&0.995\\
 $\bo=0.6$&-0.475&0.998&-0.488&0.995\\
 $\bo=0.7$&-0.426&0.994&-0.489&0.997\\
$\bo=0.8$&-0.334&0.989&-0.491&0.997\\
$\bo=0.9$&-0.225&0.990&-0.495&0.997\\
$\bo=0.95$&-0.186&0.995&-0.503&0.997\\
\hline
ifBm $\bo=0.9$&-0.302&0.987&-0.561&0.999\\
 $\bo=0.95$&-0.244&0.978&-0.559&0.999\\
\hline
\end{tabular}
}
\end{table}

\subsubsection{Impact of misspecification of regularity}

Next, Table~\ref{tabHro} illustrates the impact of estimating $H$ when  the order $r$ in quadratic variation is misspecified. In fact estimating $\bo$ requires the knowledge of $\ro$ or an upper bound of it.  On the other hand, working with a too high value of $\ro$ may induce artificial variability in estimation, so a precise estimation of $\ro$ is important. Here, our numerical results show that, if the order $r$ of quadratic variation used for estimating $\bo$ is less than $\ro+1$, then the quantity estimated is $2r$ and not $2(r+\bo)$.

\begin{table}
\caption{{\small Mean value and standard deviation (between brackets) of the estimator $\widehat H^{(p)}_n$ based on quadratic variation of order $\ro$ or $\ro-1$ instead of $\widehat{\ro}+1$  or $\widehat{\ro}+2$. \label{tabHro}}}

\medskip

\centering
{\begin{tabular}{|r|cc|cc|}
\cline{2-5}
\multicolumn{1}{c|}{}&\multicolumn{2}{|c|}{Order $\ro-1$}&\multicolumn{2}{|c|}{Order $\ro$}\\\cline{2-5}
\multicolumn{1}{c|}{}&\multicolumn{4}{|c|}{number $n$ of equidistant observations}\\
\multicolumn{1}{c|}{}&100&500&100&500\\
\hline
ifBm $\bo=0.2$&&&\tta{1.903}{0.064}&\tta{1.961}{0.025}\\
 $\bo=0.5$&&&\tta{1.955}{0.035}&\tta{1.992}{0.006}\\
$\bo=0.8$&&&\tta{1.966}{0.024}&\tta{1.994}{0.004}\\
\hline
\tta{\text{CARMA(3,1)}}{\ro=2,\bo=0.5}&\tta{1.970}{0.0140}&\tta{1.994}{0.003}&\tta{3.919}{0.058}&\tta{3.985}{0.0109}\\
\hline
\end{tabular}
}
\end{table}

\subsubsection{Processes with varying trend or non constant function $d_0$}

{\begin{figure}
\vspace*{0.2cm}\begin{center}\includegraphics[width=5cm]{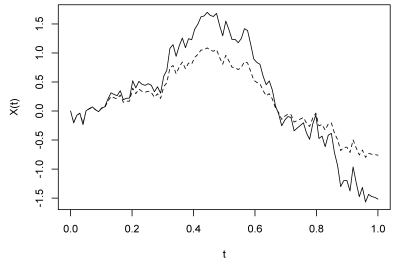}
\end{center}
\caption{Wiener process (solid) and its locally stationary transformation (dashed) used in Table~\ref{tabSelenjev}.\label{fig32}}
\end{figure}}

All previous examples are locally stationary with a constant function $d_0$. Processes meeting our conditions but with no stationary increments may be constructed using Lemma~\ref{l61}. As an example, from $Y$ a standard Wiener process ($\ro=0$, $\bo = 0.5$) or an integrated one ($\ro=1$, $\bo = 0.5$), we simulate $X(t)= (t^{\ro+0.7}+1)\,Y(t)$ having the regularity $(\ro,0.5)$ and $d_0(t)$ equaling to $(t^{\ro+0.7}+1)^2$. Figure~\ref{fig32} illustrates a Wiener sample path and its transformation.  Results are summarized in Table~\ref{tabSelenjev}:~comparing with Table \ref{tabHp} ($\bo=0.5$), it appears that  the estimation is only slightly damaged for  $\ro=1$ but of the same order when $\ro=0$. Other non stationary  processes may also be obtained by adding some smooth trend.  To this aim, we used same sample paths as for Table~\ref{tabHp}  with the additional trend $m(t)=(1+t)^2$, see Figure~\ref{Figtrend}. We may noticed in Table~\ref{tabtrend} that we obtain exactly the same results for the estimator $\widehat H_n^{(2)}$ and that only a slight loss is observed for $\widehat H_n^{(1)}$.

\begin{figure}
\begin{center}
\includegraphics[width=5cm]{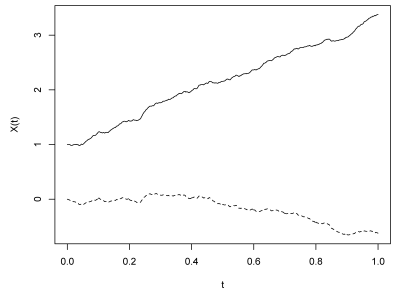}
\end{center}
\caption{Sample path of a fBm with $\bo=0.8$ (dashed line) and the same with a trend $m(t)=(1+t)^2$ (solid line).\label{Figtrend}}
\end{figure}

\begin{table}
\caption{{\small Value of MSE and bias (between brackets)
 for non constant $d_0(\cdot)$. \label{tabSelenjev}}}

\medskip

\centering
\begin{tabular}{|c|c|c|}
\cline{2-3}
\multicolumn{1}{c|}{}&&\\[-5pt]
\multicolumn{1}{c|}{}&Wiener & Integrated Wiener\\\hline
$\widehat H_n^{(1)}$&\tta{0.0021}{-0.0032}&\tta{0.0032}{0.0061}\\\hline
$\widehat H_n^{(2)}$&\tta{0.0043}{-0.0042}&\tta{0.0058}{-0.0091}\\\hline
\hline
\end{tabular}
\end{table}

\subsection{Real data}
Let us turn to examples based on real data sets. In this part, we compare our estimators of $H$ with those proposed by \citet{CH94,KW97}. We compute estimated values by setting $(u,v)=(1,m)$ in \eqref{e51} with $m$ in $\{2,4,6,8,10\}$ while for $\widehat{\alpha}_{OLS}^{(p)}$, $p=1,2$, defined in Remark~\ref{RqKW} regression is carried out over $\vu=(\ln(u),u=1,\ldots, m)\T$.
\subsubsection{Roller data}
We first focus on roller height data introduced by~\citet{La94}, which consists in $n=1150$ heights measure at 1 micron intervals along a drum of a roller. This example was already studied in~\citet{KW97}: they noticed that local self similarity  may hold at sufficiently fine scales, so  the regularity $\ro$ was supposed to be zero. Indeed, our estimator $\widehat \ro$, directly used on the data with $b_n=1/\ln(n)$, gives $\widehat\ro=0$  (with a value of $n^{4-2}\overline{\big( \Delta_2 ^{(1)}X\big)^2}$ equal to $1172345$). Next, we compute the values obtained for the estimation of $H$ in Table~\ref{tabroller}, where values of estimates proposed by \citet{CH94,KW97} are also reported for comparison. 
It should be observed that our simplified estimators present a similar sensitivity to the choice of $m$.

\begin{table}[t]
\caption{{\small Value of MSE and bias (between brackets)  for estimators $\widehat H_n^{(p)}$, for $p=1$ or $2$ in presence of a smooth trend.
 \label{tabtrend}}}

\medskip

\centering
{\begin{tabular}{|c|cc|cc|}
\cline{2-5}
\multicolumn{1}{c|}{}&&&&\\[-5pt]
\multicolumn{1}{c|}{}&\multicolumn{2}{c|}{fBm} &\multicolumn{2}{c|}{ifBm}\\
\multicolumn{1}{c|}{}&$\bo=0.5$&$\bo=0.8$&$\bo=0.5$&$\bo=0.8$\\
\hline
$\widehat H_n^{(1)}$&\tta{0.0022}{0.0151}&\tta{0.0283}{0.1525}&\tta{0.0027}{0.0083}&\tta{0.0141}{0.0863}\\\hline
$\widehat H_n^{(2)}$&\tta{0.0039}{-0.0038}&\tta{0.0040}{-0.0057}&\tta{0.0051}{-0.0060}&\tta{0.0046}{-0.0060}\\\hline
\end{tabular}
}
\end{table}

\begin{table}[b]
\caption{Estimates in the roller height example
 \label{tabroller}}
\centering
\begin{tabular}{|c|cc|cc|}
\hline
$m$ & $\widehat{H}_n^{(1)}$ & $\widehat{\alpha}^{(0)}_{\text{OLS}}$ & $\widehat{H}_n^{(2)}$& $\widehat{\alpha}^{(1)}_{\text{OLS}}$\\
\hline
2 &0.63 &  0.63 & 0.77 & 0.77\\
4 &0.50 &  0.51 &0.63  & 0.65\\
6 & 0.38 & 0.39 &0.49  &  0.51\\
8 & 0.35 &  0.33 & 0.44 & 0.42\\
10 & 0.30 & 0.28 &0.39  &  0.35\\
\hline
\end{tabular}
\end{table}

\subsubsection{Biscuit data}

\begin{figure}[t]
\vspace*{0.2cm}
\begin{center}
\begin{minipage}{.44\linewidth}
\includegraphics[height=3.6cm]{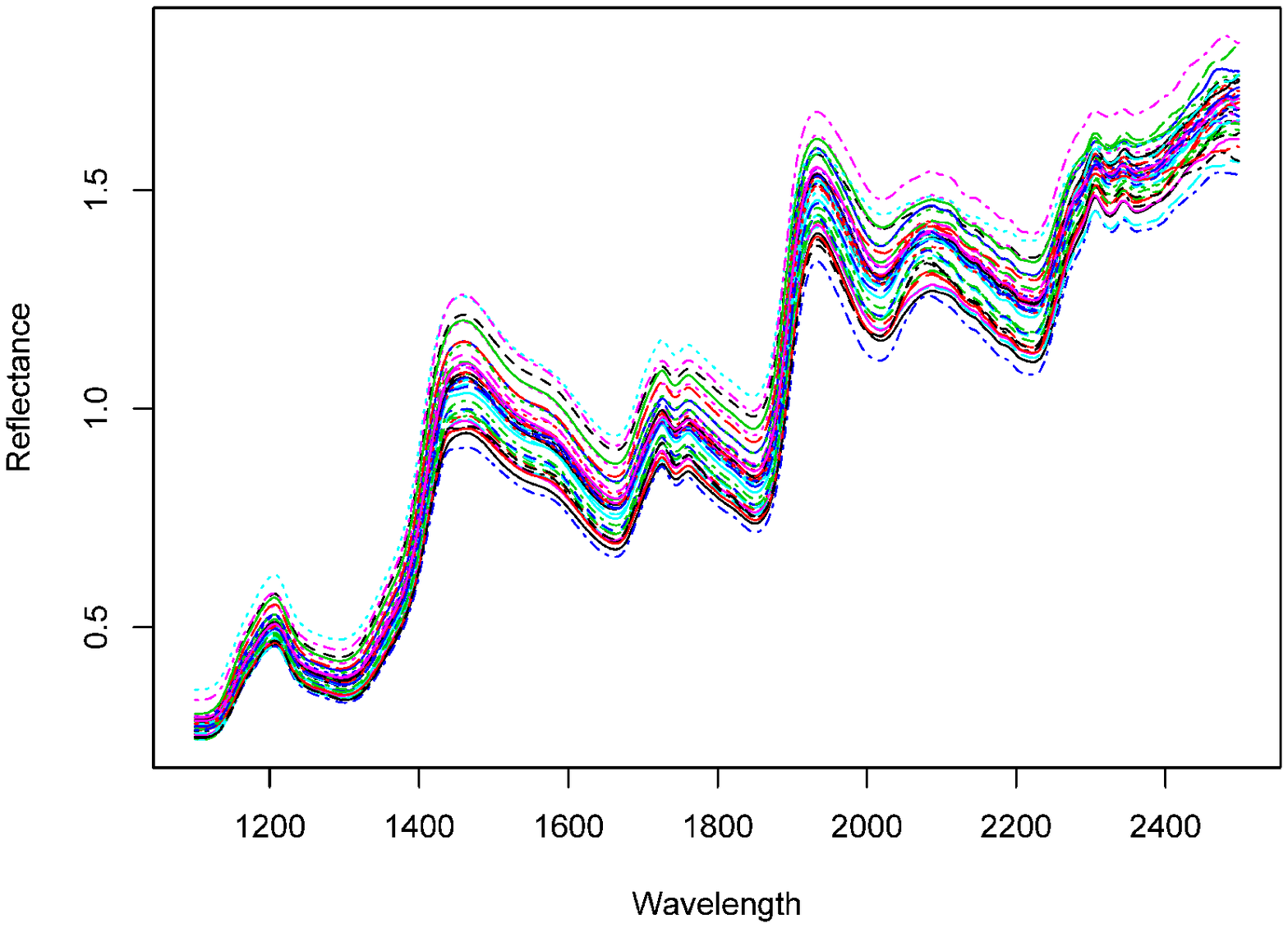}
\end{minipage}
\begin{minipage}{.45\linewidth}
\hspace*{0.2cm}\includegraphics[height=3.8cm]{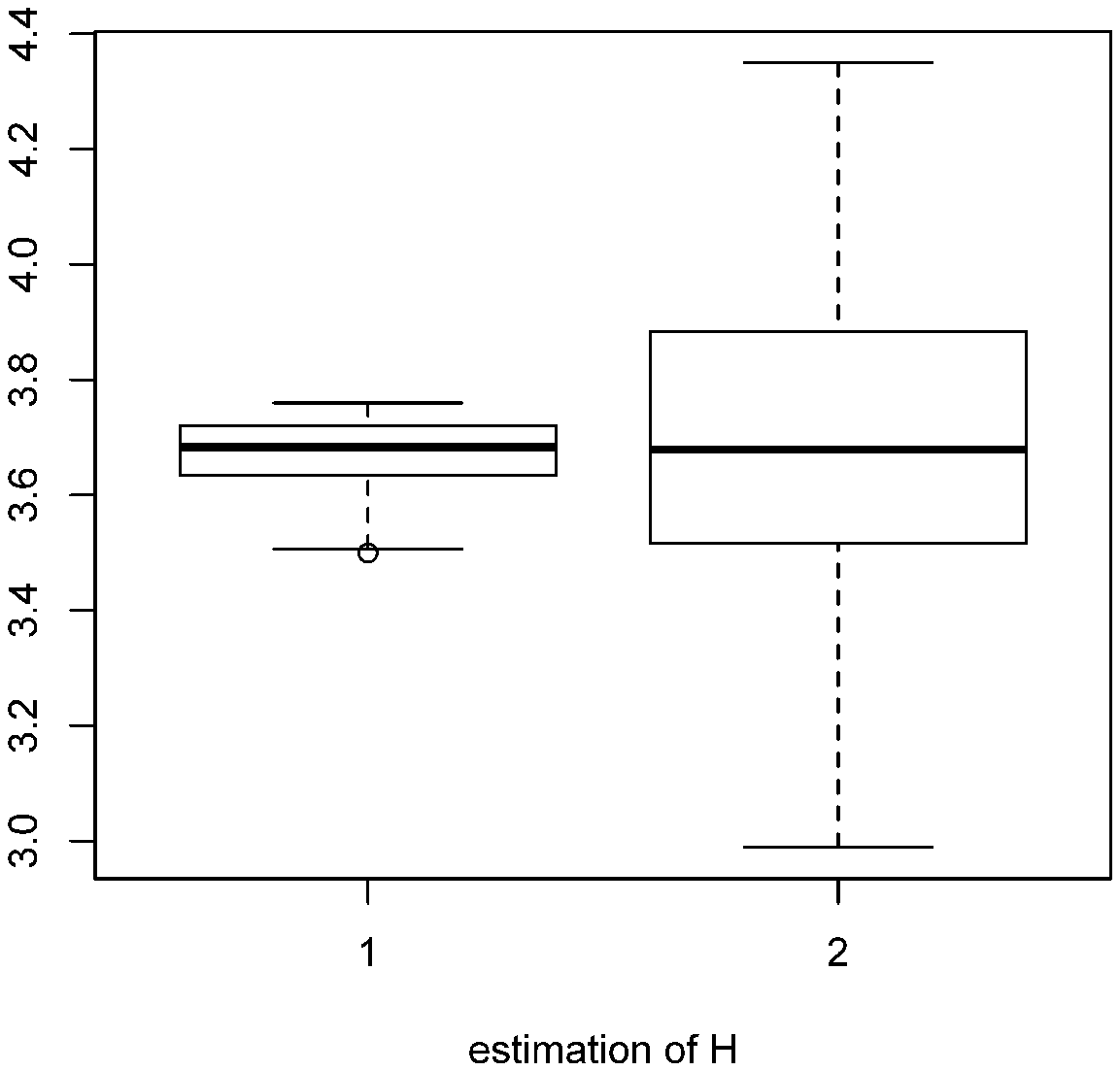}
\end{minipage}
\vskip 5pt
(a)\hskip5cm (b)
\end{center}
\caption{(a) Curve drawing reflectance in function of wavelength, varying between $1100$ and $2498$. (b) Box-plots for both estimators on the left $\widehat H_n^{(1)}$, on the right $\widehat H_n^{(2)}$ for the $39$ curves and $(u,v)=(1,4)$.}\label{figbiscuit}
\end{figure}

\begin{table}[b]
\caption{Means of estimates in the biscuit example
 \label{tab59}}
\centering
\begin{tabular}{|l|ccccc|}
\cline{2-6}
\multicolumn{1}{c|}{}&$m=2$&$m=4$&$m=6$ & $m=8$ & $m=10$\\
\hline
$\widehat{H}_n^{(1)}$& 3.60 (0.12)& 3.67 (0.07) & 3.65 (0.05) &3.62 (0.04) &3.59 (0.04)\\
$\alpha^{(1)}_{\text{OLS}}$ & 3.60 (0.12)& 3.67 (0.07) &  3.66 (0.05)& 3.63 (0.04) & 3.60 (0.03)\\
\hline
$\widehat{H}_n^{(2)}$&  2.84 (0.45) & 3.69 (0.30) &  3.83 (0.24) & 3.84 (0.19) & 3.83 (0.16)\\
$\alpha^{(2)}_{\text{OLS}}$ & 2.84 (0.45) &  3.67 (0.31) & 3.91 (0.23) & 3.98 (0.18) & 3.99 (0.14)\\
\hline
\end{tabular}
\end{table}

Now, in order to compare the (empirical) variances of these estimators, we consider a  second example introduced by \citet{BFV01}.
The experiment involved varying the composition of biscuit dough pieces and data consist in  near infrared reflectance (NIR) spectra for the same
dough. The 40 curves are graphed on the figure~\ref{figbiscuit}.  Each represents the near-infrared spectrum reflectance measure at each $2$ nanometers from $1100$ to $2498$ nm, then $700$ observation points for each biscuit. According to \citet{BFV01}, the observation 23 appears as an outlier. We estimate $\ro$ for each of the left 39  curves, using the threshold $b_n=1$, which gives $\widehat\ro=1$ for each curve. Furthermore, the averaged mean quadratic variation $n^{2r-2}\overline{\big( D_r^{(1)}X\big)^2}$ equals  to $0.33$ when $r=2$ and $122133$ when $r=3$, this explosion confirming the choice $\widehat{\ro}=3-2=1$. We turn to estimation of $H$, having in mind the comparison of our estimators together with $\alpha_\text{OLS}^{(p)}$ (where $p=1$ corresponds to the choice $(1,-2,1)$ for $a_{jr}$ and $p=2$ to the choice $(-1,3,-3,1)$). The results are summarized in Table~\ref{tab59} where it appears that, for order $\widehat{\ro}+2=3$, our estimator $\widehat{H}_n^{(2)}$ seems to be less sensitive toward high values of $m$. Also our simplified estimators present a similar variance to  $\widehat{\alpha}_{OLS}^{(p)}$, $p=1,2$. To conclude this part, it should be noticed that for the 23rd curve, the choice $m=4$ gives $\widehat{H}_n^{(1)} = 3.64$ and $\widehat{H}_n^{(2)} = 3.55$. It appears that, in both cases,  these values belong to the interquartile range obtained from the 39 curves, so at least concerning the regularity, the curve 23 should not be considered as an outlier.

\section{Annexes}

\subsection{Proofs of section \ref{Framework}}

\begin{lma}\label{l61}
Let $Y$ be a zero mean process with given regularity $(\ro,\bo)$ and asymptotic function $d_0(t)\equiv
C_{\ro,\bo}$ that satisfies A\ref{h21}(iii-$p$) ($p=1$ or 2). For a positive function $a\in C^{\ro+p}([0,T])$ and   $m\in C^{\ro+p}([0,T])$, if $X(t)=a(t)Y(t)+m(t)$, then $X$ has regularity $(\ro,\bo)$ with asymptotical function $D_{\ro,\bo}(t)
=a^2(t) C_{\ro,\bo}$ and satisfies A\ref{h21}(iii-$p$).
\end{lma}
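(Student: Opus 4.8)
The plan is to verify the three claimed conclusions about $X(t)=a(t)Y(t)+m(t)$ in turn: first the differentiability order $\ro$, then the local-stationarity index $\bo$ together with the identification of the asymptotic function $D_{\ro,\bo}(t)=a^2(t)C_{\ro,\bo}$, and finally condition A\ref{h21}(iii-$p$) on the covariance. Since $a,m\in C^{\ro+p}([0,T])$ and in particular $C^{\ro}$, the deterministic term $m$ contributes only a $C^{\ro}$ function to the mean, which is harmless, and the product $a(t)Y(t)$ is $\ro$-times differentiable in quadratic mean by the Leibniz rule: I would write $X^{(\ro)}(t)=\sum_{j=0}^{\ro}\binom{\ro}{j}a^{(j)}(t)Y^{(\ro-j)}(t)+m^{(\ro)}(t)$, each term existing because $a^{(j)}$ is continuous (hence bounded) and $Y^{(\ro-j)}$ exists in quadratic mean for $\ro-j\le\ro$. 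This establishes A\ref{h21}(i).

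For the local stationarity, the key observation is that all terms in $X^{(\ro)}$ except the single term $a(t)Y^{(\ro)}(t)$ (the $j=0$ term) are smoother: they involve $Y^{(\ro-j)}$ with $\ro-j\le\ro-1$, which is differentiable in quadratic mean and therefore Lipschitz in quadratic mean, so their increments over $|s-t|\le h$ are $O(|s-t|)=o(|s-t|^{\bo})$ since $\bo<1$. Hence I would write
\begin{equation*}
X^{(\ro)}(s)-X^{(\ro)}(t)=a(t)\big(Y^{(\ro)}(s)-Y^{(\ro)}(t)\big)+R(s,t),
\end{equation*}
where $R$ collects the lower-order stochastic terms, the deterministic $m^{(\ro)}$ difference, and the cross term $\big(a(s)-a(t)\big)Y^{(\ro)}(s)$ coming from replacing $a(s)$ by $a(t)$ in the leading term. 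Using continuity of $a$ and $\esp(X^{(\ro)}(s)-X^{(\ro)}(t))^2$, one checks $\esp R^2(s,t)=o(|s-t|^{2\bo})$ uniformly, so that
\begin{equation*}
\frac{\esp\big(X^{(\ro)}(s)-X^{(\ro)}(t)\big)^2}{|s-t|^{2\bo}}=a^2(t)\,\frac{\esp\big(Y^{(\ro)}(s)-Y^{(\ro)}(t)\big)^2}{|s-t|^{2\bo}}+o(1)\to a^2(t)C_{\ro,\bo},
\end{equation*}
which gives both the index $\bo$ and the function $D_{\ro,\bo}(t)=a^2(t)C_{\ro,\bo}$, matching \eqref{e21}. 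The cross term $(a(s)-a(t))Y^{(\ro)}(s)$ must be bounded carefully: by Assumption A\ref{h22}-type Hölder control on $a$ (or merely continuity) its mean square is $O(|s-t|^{2\alpha})$ or $o(1)$, and the uniformity over $t\in[0,T]$ follows from compactness.

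For the covariance condition A\ref{h21}(iii-$p$), I would compute $\mathds{K}_X^{(\ro+p,\ro+p)}(s,t)$ by differentiating $\cov(X(s),X(t))=a(s)a(t)\cov(Y(s),Y(t))+[\text{deterministic}]$; the deterministic part vanishes under differentiation of order $\ro+p\ge1$ in each argument, leaving a Leibniz expansion whose leading term is $a(s)a(t)\mathds{K}_Y^{(\ro+p,\ro+p)}(s,t)$ and whose remaining terms pair derivatives of $a$ against lower mixed derivatives of $\mathds{K}_Y$. The dominant singular term inherits the bound $|a(s)a(t)\mathds{K}_Y^{(\ro+p,\ro+p)}(s,t)|\le \|a\|_\infty^2 D_p|s-t|^{-(2p-2\bo)}$, and the lower-order terms are less singular, so a suitable $D_p'$ works. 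The main obstacle I expect is this last step: making rigorous that the off-diagonal mixed partials $\mathds{K}_Y^{(\ro+p-i,\ro+p-j)}$ appearing with $i+j\ge1$ are indeed less singular than the leading $-(2p-2\bo)$ order, which requires either an explicit intermediate bound derived from A\ref{h21}(iii-$p$) for $Y$ or an interpolation/integration argument connecting the various orders of differentiation; the bookkeeping of the Leibniz expansion and the uniform control of the cross term in the local-stationarity step are the places where care is needed, while the differentiability claim is essentially immediate.
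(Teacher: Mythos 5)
Your three-step plan (Leibniz rule for the quadratic-mean derivative, isolation of the leading increment $a(t)\big(Y^{(\ro)}(s)-Y^{(\ro)}(t)\big)$, Leibniz expansion of the mixed partials of the covariance) is the natural way to carry out what the paper itself disposes of in one line --- its ``proof'' is a citation of Seleznjev (2000) plus ``straightforward computation'' --- and your treatment of A\ref{h21}(i) and A\ref{h21}(ii) is essentially correct. Two details there need repair, both available from your own hypotheses: the cross term $(a(s)-a(t))Y^{(\ro)}(s)$ is \emph{not} handled by ``merely continuity'' of $a$, since continuity only yields $o(1)$ while you need $o(\abs{s-t}^{2\bo})$; but $a\in C^{\ro+p}\subset C^{1}$ gives the Lipschitz bound $O(\abs{s-t})$, which suffices because $\bo<1$. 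Likewise, after writing $X^{(\ro)}(s)-X^{(\ro)}(t)=a(t)\big(Y^{(\ro)}(s)-Y^{(\ro)}(t)\big)+R(s,t)$, passing from $\esp R^{2}=o(\abs{s-t}^{2\bo})$ to the displayed identity for the normalized increment variance silently uses Cauchy--Schwarz on the mixed term, giving $O(\abs{s-t}^{\bo})\,O(\abs{s-t})=o(\abs{s-t}^{2\bo})$; uniformity in $t$ then follows from $\sup_{t}\esp\big(Y^{(\ro)}(t)\big)^{2}<\infty$ and the uniformity in the local stationarity of $Y$. With these insertions, parts (i) and (ii), including the identification $D_{\ro,\bo}(t)=a^{2}(t)C_{\ro,\bo}$, are proved.

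The genuine gap is the one you flag and then leave open in the A\ref{h21}(iii-$p$) step, and it is not mere bookkeeping. The hypotheses on $Y$ bound only the top mixed partial $\mathds{K}_Y^{(\ro+p,\ro+p)}$ off the diagonal, and quadratic-mean differentiability gives continuity (hence boundedness on the compact square) of $\mathds{K}_Y^{(r_1,r_2)}$ only for $r_1,r_2\le\ro$; such bounded terms trivially satisfy the required bound because $\abs{s-t}^{-(2p-2\bo)}$ is bounded below on $[0,T]^{2}$. But the Leibniz terms with one index strictly above $\ro$ and not both equal to $\ro+p$ --- for $p=1$ the terms $\mathds{K}_Y^{(\ro+1,r)}$ with $r\le\ro$; for $p=2$ also $\mathds{K}_Y^{(\ro+1,\ro+1)}$, since condition (iii-2) does not presuppose (iii-1) --- carry \emph{no} assumed bound at all, neither near nor away from the diagonal, so the claim that they are ``less singular'' is precisely what must be proved. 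Closing the gap requires exactly what you defer: (a) continuity of these intermediate partials off the diagonal (implicit in the classical existence assumed in (iii-$p$)) together with compactness of $\{\abs{s-t}\ge\epsilon_0\}$, to settle the region away from the diagonal; and (b) near the diagonal, iterated integration of the top-order bound, one variable at a time, along the side of the diagonal containing $(s,t)$ and anchored at a point at fixed distance $\epsilon_0$ from it (choosing the variable in which there is room inside $[0,T]$), each integration lowering the order of the singularity by one. Since you name this obstacle but execute neither (a) nor (b), the covariance part of the lemma is not established by your proposal; the differentiability and local-stationarity parts are.
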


\begin{proof} See \citet{Se00} and straightforward computation.
\end{proof}

\begin{lma}\label{l62}
Under Assumptions A\ref{h22}, we get for  $k=0,\dotsc,n-1$, and $i=1,\dotsc,n-k$:
\begin{align}
t_{k+i} - t_k &\ge C_1 i \dn, \;\; C_1 = (T\sup_{t\in[0,T]} \psi(t))^{-1},
\label{e61}\\
\label{e62} t_{k+i} - t_k &\le C_2 i\dn, \;\; C_2 = (T\inf_{t\in[0,T]} \psi(t))^{-1},\\
\intertext{and if $i=1,\dotsc, i_{\max}$ with $i_{\max}$ not depending on $n$:}
t_{k+i} - t_k &= \frac{i\dn}{T\psi(t_k)} (1+ {\cal O} (\dn^{\alpha}))
\label{e63}
\end{align}
where ${\cal O}(\dots)$ is uniform over $i$ and $k$.
\end{lma}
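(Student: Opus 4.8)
The plan is to exploit the defining quantile relation directly. Subtracting the equation $\int_0^{t_k}\psi(s)\dd s = k\dn/T$ from its analogue for $t_{k+i}$ yields the single key identity
$$\int_{t_k}^{t_{k+i}} \psi(s)\dd s = \frac{(k+i)\dn}{T} - \frac{k\dn}{T} = \frac{i\dn}{T},$$
valid for all admissible $k$ and $i$; all three assertions will follow from it together with the two parts of A\ref{h22}. For the two-sided bounds \eqref{e61}--\eqref{e62} I would simply control the integrand. Writing $m_\psi = \inf_{[0,T]}\psi$ and $M_\psi = \sup_{[0,T]}\psi$ (finite, and strictly positive by A\ref{h22}(i)), monotonicity of the integral gives
$$m_\psi\,(t_{k+i}-t_k) \le \frac{i\dn}{T} \le M_\psi\,(t_{k+i}-t_k).$$
The left inequality rearranges to $t_{k+i}-t_k \le i\dn/(T m_\psi) = C_2\, i\dn$, and the right to $t_{k+i}-t_k \ge i\dn/(T M_\psi) = C_1\, i\dn$, which are precisely \eqref{e62} and \eqref{e61}.

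For the refined expansion \eqref{e63} I would centre the integrand at $\psi(t_k)$ and estimate the remainder via Hölder continuity. Writing $\psi(s) = \psi(t_k) + (\psi(s)-\psi(t_k))$ in the key identity gives
$$\frac{i\dn}{T} = \psi(t_k)(t_{k+i}-t_k) + R_{k,i}, \qquad R_{k,i} = \int_{t_k}^{t_{k+i}}\big(\psi(s)-\psi(t_k)\big)\dd s.$$
By A\ref{h22}(ii), $\abs{\psi(s)-\psi(t_k)} \le L(s-t_k)^{\alpha} \le L(t_{k+i}-t_k)^{\alpha}$ on the interval of integration, so $\abs{R_{k,i}} \le L(t_{k+i}-t_k)^{1+\alpha}$. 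Inserting the already-established bound \eqref{e62} and using that $i$ ranges only over the finite set $\{1,\dots,i_{\max}\}$ (whence $t_{k+i}-t_k \le C_2\, i_{\max}\dn$), one obtains $\abs{R_{k,i}} = {\cal O}(\dn^{1+\alpha})$ uniformly in $k$ and $i$. Dividing by $\psi(t_k)\ge m_\psi>0$ then gives
$$t_{k+i}-t_k = \frac{i\dn}{T\psi(t_k)} - \frac{R_{k,i}}{\psi(t_k)} = \frac{i\dn}{T\psi(t_k)} + {\cal O}(\dn^{1+\alpha}),$$
and factoring out the leading term while invoking $i\ge 1$ (so that $i\dn/(T\psi(t_k))\ge C_1\dn$) converts the additive error into the relative factor $\big(1+{\cal O}(\dn^{\alpha})\big)$ claimed in \eqref{e63}, with constants depending only on $m_\psi, M_\psi, L, T$ and $i_{\max}$.

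The one genuinely load-bearing point is this final passage from the absolute remainder to the relative error, and it is where every hypothesis is used. It is precisely the restriction $i\le i_{\max}$ that keeps $t_{k+i}-t_k$ of order $\dn$, so that $(t_{k+i}-t_k)^{1+\alpha}$ is of order $\dn^{1+\alpha}$ and hence negligible against the leading $\dn$-term; for unbounded $i$ the relative error would instead be of order $(i\dn)^{\alpha}$ and the uniform expansion would break down. Meanwhile the lower bound $m_\psi>0$ from A\ref{h22}(i) secures the uniformity in $k$ of both the division by $\psi(t_k)$ and the comparison $i\dn/(T\psi(t_k))\ge C_1\dn$. The rest is routine bookkeeping of the $\cal O$-terms.
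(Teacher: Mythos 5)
Your proof is correct and takes essentially the same route as the paper's: both rest on the quantile identity $\int_{t_k}^{t_{k+i}}\psi(s)\,\mathrm{d}s = i\dn/T$, the two-sided bounds on $\psi$ from A\ref{h22}(i), and the H\"{o}lder condition A\ref{h22}(ii) together with the restriction $i\le i_{\max}$. The only difference is bookkeeping: the paper invokes the mean value theorem to obtain the multiplicative representation $t_{k+i}-t_k = i\dn/\bigl(T\psi(t_k+\theta(t_{k+i}-t_k))\bigr)$ at once, whereas you decompose the integral additively and convert the ${\cal O}(\dn^{1+\alpha})$ remainder into the relative ${\cal O}(\dn^{\alpha})$ factor at the end using $i\ge 1$.
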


\begin{proof}
Relations \eqref{e61}-\eqref{e62} are obtained with the mean-value theorem that induces, for $k=0,\dotsc,n-1$ and $i=1,\dotsc,n-k$:
$$ t_{k+i} -t_k = \frac{i\dn}{T\psi(t_k + \theta (t_{k+i} - t_k))}, \;\; 0<\theta <1.$$
To obtain the equivalence \eqref{e63}, one may write $t_{k+i} - t_k = \frac{i\dn}{T\psi(t_k)} (1+ R_n)$ with $R_n$ defined by
$$
R_n = \frac{\psi(t_k) - \psi(t_{k} + \theta (t_{k+i} - t_k))}{\psi(t_k + \theta (t_{k+i} - t_k))} \le \frac{L \abs{t_{k+i} - t_k}^{\alpha}}{\inf_{t\in [0,T]} \psi(t)} = {\cal O}(\dn^{\alpha})
$$
by Assumption A\ref{h22} and uniformly over $i$, $n$ and $k$ for $i=1,\dotsc,i_{\max}$.
\end{proof}
\begin{lma}\label{l63}
We have under Assumption~A\ref{h22} and for $r\ge 1$, $i=0,\dotsc,r$,
\begin{enumerate}
\item for $p=0,\dotsc,r-1$ and convention $0^0=1$
\begin{equation} \label{e64} \sum_{i=0}^r (t_{k+iu} - t_k)^p b_{ikr}^{(u)} = 0, \end{equation}
\item \begin{equation} \label{e65}  \sum_{i=0}^r (t_{k+iu} - t_k)^r b_{ikr}^{(u)} = 1,  \end{equation}
\item  \begin{equation} \label{e66} \abs{b_{ikr}^{(u)}} \le \frac{C_1^{-r}u^{-r}\dn^{-r},}{\prod_{m=0,m\not=i}^r  \abs{i-m}}, \end{equation}
with $C_1$ given by \eqref{e61},
\item \begin{equation} b_{ikr}^{(u)} = \frac{u^{-r} \psi^{r}(t_k)T^r\dn^{-r} }{\prod_{m=0,m\not=i}^r (i-m)}(1+ {\cal O}(\dn^{\alpha}))\label{e67} \end{equation}
with ${\cal O}(\dots)$ uniform over $i$ and $k$. \end{enumerate}
\end{lma}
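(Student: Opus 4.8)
The plan is to treat the four assertions in order: the first two are purely algebraic identities for divided differences, while the last two quantify the size of the coefficients using the spacing estimates of Lemma~\ref{l62}.

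For \eqref{e64} and \eqref{e65} I would exploit the interpolation meaning of the $b_{ikr}^{(u)}$. By \eqref{e23} and the Lagrange representation recalled in \eqref{e22}, the sum $\sum_{i=0}^r b_{ikr}^{(u)}\,g(t_{k+iu})$ is exactly the coefficient of the leading monomial $t^r$ in the unique polynomial of degree $r$ interpolating $g$ at the $r+1$ nodes $t_k,\dotsc,t_{k+ru}$. Applying this with $g(t)=(t-t_k)^p$: when $p\le r$ the interpolant coincides with $g$ itself, so the sum equals the coefficient of $t^r$ in $(t-t_k)^p$. This coefficient vanishes for $p<r$, which gives \eqref{e64}, and equals $1$ for $p=r$, which gives \eqref{e65}.

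For the bound \eqref{e66} I would estimate each factor in the denominator of \eqref{e23} from below. For $m<i$, writing $t_{k+iu}-t_{k+mu}$ as an increment starting from the node of index $k+mu$ and applying \eqref{e61} with step $(i-m)u$ yields $t_{k+iu}-t_{k+mu}\ge C_1(i-m)u\,\dn$; by symmetry $\abs{t_{k+iu}-t_{k+mu}}\ge C_1\abs{i-m}u\,\dn$ for every $m\not=i$. Taking the product over the $r$ indices $m\not=i$ and inverting then gives \eqref{e66}.

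The asymptotic expansion \eqref{e67} is the delicate point and rests on the sharp equivalence \eqref{e63}. Since $i,m\le r$ and $u$ is fixed, the indices $iu$ and $mu$ stay bounded independently of $n$, so applying the mean value theorem to $\int_{t_{k+mu}}^{t_{k+iu}}\psi(s)\dd s=(i-m)u\dn/T$ gives $t_{k+iu}-t_{k+mu}=(i-m)u\dn/(T\psi(\xi))$ for some $\xi$ lying within ${\cal O}(\dn)$ of $t_k$ by \eqref{e62}; the H\"older bound A\ref{h22}(ii) together with $\inf\psi>0$ then upgrades this to $t_{k+iu}-t_{k+mu}=\frac{(i-m)u\dn}{T\psi(t_k)}(1+{\cal O}(\dn^{\alpha}))$, uniformly in $i,m,k$. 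Substituting into \eqref{e23} and factoring out the leading part of each of the $r$ denominators produces the claimed main term $u^{-r}\psi^r(t_k)T^r\dn^{-r}/\prod_{m\not=i}(i-m)$. The main obstacle is the bookkeeping of the error: one must check that the product of the $r$ factors $(1+{\cal O}(\dn^{\alpha}))$ collapses back to a single $(1+{\cal O}(\dn^{\alpha}))$ — legitimate precisely because $r$ and $u$ are fixed and $\abs{i-m}\ge 1$ keeps each relative error of order $\dn^{\alpha}$ — and that the resulting ${\cal O}$ is uniform in $i$ and $k$.
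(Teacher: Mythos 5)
Your proof is correct and follows essentially the same route as the paper's: the identities \eqref{e64}--\eqref{e65} come from the leading-coefficient interpretation of the divided difference applied to $g(t)=(t-t_k)^p$ together with uniqueness of the degree-$r$ interpolant, and the bounds \eqref{e66}--\eqref{e67} come from the spacing estimates of Lemma~\ref{l62} substituted into the definition \eqref{e23}. The paper dispatches the latter two as ``direct consequences''; your write-up simply supplies the bookkeeping (in particular that the $r$ factors $(1+{\cal O}(\dn^{\alpha}))$ collapse into a single one because $r$, $u$ are fixed and $\abs{i-m}\ge 1$), which is exactly what is intended.
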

\begin{proof}
The term  $g[t_k,\dotsc,t_{k+ru}]= \sum_{i=0}^r b_{ikr}^{(u)} g(t_{k+iu})$ is the leading coefficient in the polynomial approximation of degree $r$ of $g$, given in the decomposition \eqref{e22}. Considering the polynomial  $g(t) = (t-t_{k})^p$,   we may immediately deduce  the properties \eqref{e64}-\eqref{e65}, from  uniqueness of relation \eqref{e22}. Next, \eqref{e66}-\eqref{e67} are direct consequences of Lemma~\ref{l62} and  definition \eqref{e23} of $b_{ikr}^{(u)}$.
\end{proof}

\noindent \begin{pro}\label{p61} Under Assumption~A\ref{h21} and A\ref{h22}, one obtains: \\
\indent (i)  for  $r=\ro+p$ with $p=1,2$:
$$
n^{-2(p-\bo)}\,\esp\Big(\,\overline{\big( D_{\ro+p}^{(u)}X\big)^2}\,\Big) \tv[n\to\infty ]{ }u^{-2(p-\bo)}\ell(p,\ro,\bo)
$$
where $\dsp \ell (p,0,\bo) = -\frac{1}{2} \int_0^T d_0(t) \frac{\psi^{2p+1}(t)}{\psi^{2\bo}(t)} \dt\sum_{i,j=0}^{p} \frac{\abs{i-j}^{2\bo}}{\prod_{\substack{m=0\\m\not=i}}^{p} (i-m) \prod_{\substack{q=0\\q\not=j}}^{p} (j-q)} $
while if $\ro\ge 1$,
\begin{equation}
\label{e68} \ell (p,\ro,\bo) =\frac{(-1)^{\ro+1} \int_0^T d_0(t) \frac{\psi^{2p+1}(t)}{\psi^{2\bo}(t)}\dt}{2(2\bo+2\ro)\dotsm(2\bo+1)} \sum_{i,j=0}^{\ro+p}  \frac{\abs{i-j}^{2(\ro+\bo)}}{\prod_{\substack{m=0\\m\not=i}}^{\ro+p} (i-m) \prod_{\substack{q=0\\q\not=j}}^{\ro+p} (j-q) }
\end{equation}

(ii) for $\ro \ge 1$ and $r=1,\dotsc,\ro$: $$
\esp\Big(\,\overline{\big( D_r^{(u)}X\big)^2}\,\Big) \tv[n\to\infty ]{ } \frac{1}{(r!)^2} \int_0^T \esp \big(X^{(r)}(t)\big)^2 \psi(t)\dt.$$
\end{pro}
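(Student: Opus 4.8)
The plan is, for each fixed base index $k$, to find the exact order and limit of $\esp\big[(D_{r,k}^{(u)}X)^2\big]$ as a function of $t_k$, and then to replace the average $\frac1{n_r+1}\sum_{k=0}^{n_r}$ by an integral against $\psi$: since the design points satisfy $\int_0^{t_k}\psi=k\dn/T$ and $n\dn\to T$, their empirical measure converges weakly to $\psi\,\dd t$, so $\frac1{n_r+1}\sum_k f(t_k)\to\int_0^T f(t)\psi(t)\dt$ for continuous $f$. In case (i) I first discard the mean. Because $\mu\in C^{\ro+1}$ and $r=\ro+p\ge\ro+1$, expanding $\mu$ by Taylor about $t_k$ and using the vanishing-moment identity \eqref{e64} (valid up to exponent $r-1$) kills every polynomial term, leaving $\esp[D_{r,k}^{(u)}X]=\mu[t_k,\dots,t_{k+ru}]={\cal O}\big((u\dn)^{\ro+1-r}\big)$ via the bound \eqref{e66}; squaring, this is negligible against the variance, whose order $(u\dn)^{-2(p-\bo)}$ dominates because $\bo<1$. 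It remains to analyse $\var(D_{r,k}^{(u)}X)$.

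For $\ro\ge1$ I would express the divided difference through $X^{(\ro)}$. The mean-square Taylor formula gives $X(t_{k+iu})=\sum_{\ell=0}^{\ro-1}\frac{X^{(\ell)}(t_k)}{\ell!}(t_{k+iu}-t_k)^\ell+\frac1{(\ro-1)!}\int_{t_k}^{t_{k+iu}}(t_{k+iu}-s)^{\ro-1}X^{(\ro)}(s)\,\dd s$, and \eqref{e64} again annihilates the polynomial part, so $D_{r,k}^{(u)}X=\frac1{(\ro-1)!}\sum_i b_{ikr}^{(u)}\int_{t_k}^{t_{k+iu}}(t_{k+iu}-s)^{\ro-1}X^{(\ro)}(s)\,\dd s$. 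Hence $\var(D_{r,k}^{(u)}X)$ is a double sum--integral against $\mathds{K}^{(\ro,\ro)}(s,s')=\cov(X^{(\ro)}(s),X^{(\ro)}(s'))$. Writing $\mathds{K}^{(\ro,\ro)}(s,s')=\tfrac12\big[\mathds{K}^{(\ro,\ro)}(s,s)+\mathds{K}^{(\ro,\ro)}(s',s')-\esp(X^{(\ro)}(s)-X^{(\ro)}(s'))^2\big]$ and noting that each diagonal piece, after summation over the complementary index, produces a factor $\sum_i b_{ikr}^{(u)}(t_{k+iu}-t_k)^{\ro}=0$ by \eqref{e64} (here $\ro\le r-1$), I am left only with the increment variance of $X^{(\ro)}$. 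For $\ro=0$ this step is vacuous and one works directly with $\esp(X(t_{k+iu})-X(t_{k+ju}))^2$.

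Next I would invoke local stationarity \eqref{e21} to replace the increment variance by $d_0(t_k)\,|s-s'|^{2\bo}$, the relative error being uniformly $o(1)$ since Lemma~\ref{l62} forces all spacings $t_{k+ru}-t_k$ to vanish uniformly and $d_0$ is continuous. Rescaling the integration variables by $h=u\dn/(T\psi(t_k))$ through \eqref{e63} and replacing $b_{ikr}^{(u)}$ by its leading form \eqref{e67}, and collecting the powers of $u,\dn,T,\psi(t_k)$, produces the prefactor $u^{-2(p-\bo)}\dn^{-2(p-\bo)}T^{2(p-\bo)}\psi^{2p-2\bo}(t_k)$ times the dimensionless constant $\sum_{i,j}\frac{1}{\prod_{m\ne i}(i-m)\prod_{q\ne j}(j-q)}\int_0^i\!\!\int_0^j\frac{(i-\tau)^{\ro-1}(j-\tau')^{\ro-1}}{((\ro-1)!)^2}|\tau-\tau'|^{2\bo}\,\dd\tau\,\dd\tau'$. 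The crux is to evaluate this constant: the inner double integral is the $\ro$-fold Riemann--Liouville integral in each variable of $|\tau-\tau'|^{2\bo}$, whose homogeneous primitive is $\frac{(-1)^{\ro}}{(2\bo+1)\cdots(2\bo+2\ro)}|i-j|^{2(\ro+\bo)}$, the remaining Taylor pieces being polynomials of degree $<\ro\le r-1$ separately in $i$ and in $j$; since the weights $1/\prod_{m\ne i}(i-m)$ annihilate such polynomials by \eqref{e64}, only the homogeneous term survives, giving exactly the sign and factor of \eqref{e68}. Multiplying by $-\tfrac12 d_0(t_k)$, averaging over $k$ (which turns $\psi^{2p-2\bo}(t_k)$ into $\int_0^T d_0\,\psi^{2p+1}/\psi^{2\bo}\dt$), and absorbing the normalization $n^{-2(p-\bo)}$ using $n\dn\to T$ yields the announced limit $u^{-2(p-\bo)}\ell(p,\ro,\bo)$.

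For part (ii), with $1\le r\le\ro$, the process is itself $r$-times mean-square differentiable, and as the $r+1$ nodes $t_k,\dots,t_{k+ru}$ coalesce --- their diameter being ${\cal O}(ur\dn)\to0$ by \eqref{e62} --- the mean-square divided difference converges, $D_{r,k}^{(u)}X\to X^{(r)}(t_k)/r!$ in quadratic mean, so $\esp[(D_{r,k}^{(u)}X)^2]\to \mathds{K}^{(r,r)}(t_k,t_k)/(r!)^2$ uniformly in $k$ by the covariance regularity of A\ref{h21}; averaging and using continuity of $t\mapsto\mathds{K}^{(r,r)}(t,t)$ with the limiting density $\psi$ gives $\frac1{(r!)^2}\int_0^T\esp(X^{(r)}(t))^2\psi(t)\dt$. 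I expect the decisive difficulty to lie entirely in part (i): upgrading the pointwise statements \eqref{e21}, \eqref{e63} and \eqref{e67} to estimates uniform over $k$ and over the finitely many indices $i,j$, with remainders that survive the averaging, and carrying out the Riemann--Liouville evaluation cleanly --- in particular verifying that every non-homogeneous primitive piece is indeed a polynomial of degree $<r$ in each variable, hence killed by the divided-difference weights.
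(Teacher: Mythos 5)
Your proposal is correct, and its skeleton is the same as the paper's: mean-square Taylor expansion with integral remainder, annihilation of the polynomial part by the vanishing-moment identities \eqref{e64}, elimination of the diagonal terms (possible precisely because $\ro\le r-1$), substitution of $d_0(t_k)\abs{s-s'}^{2\bo}$ via local stationarity, and Riemann summation using the design asymptotics \eqref{e63} and \eqref{e67}; your part (ii) is also the paper's argument (uniform mean-square continuity of $X^{(r)}$, i.e.\ of $\ll^{(r,r)}$, plus the normalization \eqref{e65}). Where you genuinely depart from --- and in fact complete --- the paper is the evaluation of the limiting constant: the paper stops at the form $\sum_{i,j}\bigl(\prod_{m\not=i}(i-m)\prod_{q\not=j}(j-q)\bigr)^{-1}(ij)^{\ro}\intd_{[0,1]^2}((1-v)(1-w))^{\ro-1}\abs{iv-jw}^{2\bo}\dv\dw/((\ro-1)!)^2$ (your constant, after the rescaling $\tau=iv$, $\tau'=jw$) and dismisses the passage to \eqref{e68} as ``elementary but tedious multiple integrations by parts'', whereas you identify this quantity as the $\ro$-fold Riemann--Liouville primitive in each variable of $\abs{\tau-\tau'}^{2\bo}$, observe that it equals $\frac{(-1)^{\ro}}{(2\bo+1)\dotsm(2\bo+2\ro)}\abs{i-j}^{2(\ro+\bo)}$ plus terms of the separated form $i^a h_a(j)$ and $g_b(i)j^b$ with $a,b<\ro\le r-1$, and kill the latter by the discrete analogue of \eqref{e64}; this yields the sign $(-1)^{\ro+1}$ and the factor $2(2\bo+1)\dotsm(2\bo+2\ro)$ of \eqref{e68} by a transparent argument. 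A secondary difference: you center first, showing $(\esp D_{r,k}^{(u)}X)^2={\cal O}((u\dn)^{2-2p})$ is negligible against the variance of order $(u\dn)^{-2(p-\bo)}$, while the paper works throughout with the non-centered moments $\ll^{(p,p)}$, so the mean never needs separate treatment. Two small slips to repair, neither fatal: your polarization identity mixes the covariance $\mathds{K}^{(\ro,\ro)}$ with the non-centered increment moment $\esp(X^{(\ro)}(s)-X^{(\ro)}(s'))^2$ appearing in \eqref{e21}; these differ by $(\mu^{(\ro)}(s)-\mu^{(\ro)}(s'))^2={\cal O}(\abs{s-s'}^2)=o(\abs{s-s'}^{2\bo})$, a correction you should state (it is harmless since $\mu\in C^{\ro+1}$). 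Likewise, in part (ii) the limit of $\esp\bigl[(D_{r,k}^{(u)}X)^2\bigr]$ is the second moment $\esp\bigl[(X^{(r)}(t_k))^2\bigr]/(r!)^2$ and not $\mathds{K}^{(r,r)}(t_k,t_k)/(r!)^2$ as written mid-argument --- your final display is the correct statement, consistent with the proposition.
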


\begin{proof}
{\bf A.} Let us begin with general expressions of $\dsp\esp\Big(\,D_{r,k}^{(u)}\,X D_{r,\ell}^{(u)}\,X\Big)$ useful for the sequel. First for $\mathds{L}^{(p,p)}(s,t) = \esp \big(X^{(p)}(s)X^{(p)}(t)\big)$ ($p\ge 0$), the relation \eqref{e21} is equivalent to
\begin{equation} \label{e69}
\lim_{h\to 0} \sup_{\substack{s,t\in[0,T]\\ \abs{s-t} \le h, s\not=t}} \abs{\frac{\ll^{(\ro,\ro)}(s,s) +
\ll^{(\ro,\ro)}
(t,t) - 2 \ll^{(\ro,\ro)}(s,t)}{\abs{s-t}^{2\bo}} - d_0(t)} = 0.
\end{equation}
For $(v,w)\in[0,1]^2$, we set  $\dot{v}_{ik} = t_k +(t_{k+iu} - t_k)v$ and $\dot{w}_{j\ell} = t_{\ell} + (t_{\ell+ju} - t_{\ell})w$.
Next, from the definition of $D_{r,k}^{(u)}\,X$ given in \eqref{e24}, we get
$$ \esp(D_{r,k}^{(u)}\,X D_{r,\ell}^{(u)}\,X) = \sum\limits_{i,j=0}^{r} b_{ikr}^{(u)}b_{j\ell r}^{(u)}
\ll^{(0,0)}(t_{k+iu},t_{\ell+ju}).$$
For $\ro=0$ and since $\sum_{i=0}^{r} b_{ikr}^{(u)}=0$, we have:
\begin{multline} \label{e610} \esp(D_{r,k}^{(u)}\,X D_{r,\ell}^{(u)}\,X) = \sum_{i,j=0}^{r}
\!b_{ikr}^{(u)}b_{j\ell r}^{(u)}
\Big\{ \ll^{(0,0)}(t_{k+iu},t_{\ell+ju})\\ -\frac12 \ll^{(0,0)}(t_{k+iu},t_{k+iu}) -\frac12
\ll^{(0,0)}(t_{\ell+ju},t_{\ell+ju}) \Big\}.
\end{multline}

If $\ro\ge 1$, we apply multiple Taylor series expansions with integral remainder. Next,  the properties $\sum_{i=0}^{r}
b_{ikr}^{(u)} (t_{k+iu} - t_k)^p=0$ for $p=0,\dotsc, r-1$   (and convention $0^0=1$) induce :
\begin{multline}
\label{e611} \esp(D_{r,k}^{(u)}\,X D_{r,\ell}^{(u)}\,X) =  \sum_{i,j=0}^{r} \!b_{ikr}^{(u)}b_{j\ell r}^{(u)}
(t_{k+iu} - t_k)^{\ra}(t_{\ell+ju} - t_\ell)^{\ra}\\ \times  \intd_{[0,1]^2} \!\frac{\!(1-v)^{\ra-1}\!(1-w)^{\ra-1}}{((\ra -
1)!)^2} \ll^{(\ra,\ra)}(\dot{v}_{ik},\dot{w}_{j\ell}) \dv\!\dw
\end{multline}
where we have set $\ra=\min(\ro,r)\ge 1$.

\medskip\

\noindent {\bf B.} From expressions \eqref{e610}-\eqref{e611}, we are in position to derive the asymptotic behavior of $ \esp\big(\,\overline{(D_r^{(u)}\, X)^2}\big)$.

\paragraph{\emph{Case $\ro \ge  1$, $r=\ro+p$, $p=1$ or $p=2$.}}

In this case, $\ra=\ro\le r-1$. From \eqref{e611} and the property $\sum_{i=0}^{r} b_{ikr}^{(u)} (t_{k+iu} - t_k)^{\ro}=0$, we may
write
\begin{multline} \label{e612} \esp(D_{r,k}^{(u)}\,X)^2 =  \sum_{i,j=0}^{r} b_{ikr}^{(u)}b_{jkr}^{(u)}
(t_{k+iu} - t_k)^{\ro} (t_{k+ju} - t_k)^{\ro}\!\!\int_0^1\!\!\!\int_0^1 \!\! \frac{((1-v)(1-w))^{\ro-1}}{((\ro - 1)!)^2}
 \\  \times\Big\{
\ll^{(\ro,\ro)}(\dot{v}_{ik},\dot{w}_{jk}) -\frac12 \ll^{(\ro,\ro)}(\dot{v}_{ik},\dot{v}_{ik}) -\frac12
\ll^{(\ro,\ro)}(\dot{w}_{jk},\dot{w}_{jk}) \Big\} \dv\!\dw
\end{multline}
Using the locally stationary condition \eqref{e69}, uniform continuity of $d_0(\cdot)$ on $[0,T]$ and the bound:
  $\abs{\dot{v}_{ik} - \dot{w}_{jk}}\le t_{k+ru} - t_k \le C_1 ru \dn $ for $i=0,\dotsc,r$ and $j=0,\dotsc,r$, we may show that the predominant term for $ \esp\big(\,\overline{(D_r^{(u)}\, X)^2}\big)$ is given by:
\begin{multline} \label{e613} \frac{-1}{2(n_r+1)} \sum_{k=0}^{n_r}\sum_{i,j=0}^{r} b_{ikr}^{(u)}b_{jkr}^{(u)}
(t_{k+iu} - t_k)^{\ro} (t_{k+ju} - t_k)^{\ro}  \\  \times  \intd_{[0,1]^2} \frac{((1-v)(1-w))^{\ro-1}}{((\ro - 1)!)^2}
\abs{\dot{v}_{ik} - \dot{w}_{jk}}^{2\bo} d_0(t_k) \dv\dw.
\end{multline}
From the equivalents \eqref{e63} and \eqref{e67}, we can write the leading term of \eqref{e613} as a Riemann sum on $t_k$ to obtain
\begin{multline*}\dn^{2p - 2\bo} \esp\big(\,\overline{(D_r^{(u)}\, X)^2}\big) \tv[n\to\infty]{} - \frac{1}{2} (\frac{u}{T})^{-2p+2\bo} \int_0^T d_0(t) \psi^{2p+1-2\bo}(t) \dt
\\\times   \sum_{i,j=0}^r \frac{(ij)^{\ro}}{\prod\limits_{\substack{m=0\\m\not=i}}^r (i-m) \prod\limits_{\substack{q=0\\q\not=j}}^r (j-q)} \intd_{[0,1]^2} \!\! \frac{((1-v)(1-w))^{\ro-1}}{((\ro - 1)!)^2} \abs{iv - jw}^{2\bo}\dv\dw.\end{multline*}
Next by performing elementary but tedious multiple integrations by parts, we arrive at the following  simpler form of $\ell (r,\ro,\bo)$
given in \eqref{e68}, for $n\dn\to T$.

\paragraph{\emph{Case $\ro=0$, $r=\ro+1,_,\ro+2$.}}

The proof is the same but starting from \eqref{e610} and $\ell=k$.

\paragraph{\emph{Case $\ro \ge 1$ and $r=1,\dotsc,\ro$.}}

In this case, $\ra = r$ and from the relation  \eqref{e611}, one gets
\begin{multline*}
\esp(D_{r,k}^{(u)}\,X)^2 = \sum\limits_{i,j=0}^{r} \!b_{ikr}^{(u)}b_{jkr}^{(u)}
(t_{k+iu}-t_k)^{r}(t_{k+ju}-t_k)^{r}\\ \times \intd_{[0,1]^2}
\frac{((1-v)(1-w))^{r-1}}{((r - 1)!)^2} \ll^{(r,r)}(\dot{v}_{ik},\dot{w}_{jk}) \dv\!\dw.
\end{multline*}
The result follows after Riemann summation with the help of uniform continuity of $\ll^{(r,r)}(\cdot,\cdot)$, $r=1,\dotsc,\ro$ and properties \eqref{e63}, \eqref{e65}.
\end{proof}

\subsection{Auxiliary results}

The following lemma gives some useful results on the asymptotic behavior of $\mathds{C}_r (k,\ell)$ and  $\mathds{C}_r^2 (k,\ell)$ with $\mathds{C}_r (k,\ell) = \cov\big(D_{r,k}^{(u)}\,X,D_{r,\ell}^{(u)}\,X\big)$  with $n_r = n-ur$ and $u$  a positive integer.
\begin{lma}\label{l64} Suppose that Assumption A\ref{h21} and A\ref{h22} are fulfilled.

(i) Under the condition A\ref{h21}-(iii-1) and for  $r=\ro+p$, $p=1$ or $p=2$,  one obtains
$$
\max_{k=0,\dotsc,n_r}  \sum_{\ell=0}^{n_r}
\abs{\mathds{C}_r (k,\ell)} = \begin{cases} {\mathcal O} ( n^{2p-2\bo}) &\text{ if } 0 < \bo <\frac{1}{2},\\
{\mathcal O} ( n^{2p-1}\ln n) &\text{  if } \bo =\frac{1}{2}, \\
 {\mathcal O} ( n^{2p-1}) &\text{ if } \frac{1}{2}<\bo<1~;\end{cases}$$ and
$$
 \sum_{k=0}^{n_r}  \sum_{\ell=0}^{n_r}
\mathds{C}_r^2 (k,\ell) = \begin{cases} {\mathcal O} ( n^{4p-4\bo+1}) &\text{ if } 0 < \bo <\frac{3}{4},\\
{\mathcal O} ( n^{4p -2}\ln n) &\text{  if } \bo =\frac{3}{4}, \\
 {\mathcal O} ( n^{4p-2}) &\text{ if } \frac{3}{4}<\bo<1.\end{cases}
$$

(ii) Under the condition A\ref{h21}-(iii-2) and for $r=\ro+2$, one obtains $$\max\limits_{k=0,\dotsc,n_r}  \sum_{\ell=0}^{n_r}
\abs{ \mathds{C}_r (k,\ell)} = {\mathcal O}(n^{4-2\bo}) \text{ and }
\sum_{k=0}^{n_r}  \sum_{\ell=0}^{n_r}
 \mathds{C}_r^2 (k,\ell) ={\mathcal O}(n^{9-4\bo}).$$

(iii)  If $r=1,\dotsc,\ro$ (with $\ro \ge 1$), then $\dsp \max_{k=0,\dotsc,n_r}  \sum_{\ell=0}^{n_r}
\abs{\mathds{C}_r(k,\ell)} = {\mathcal O}  (n) $ and $\dsp \sum_{k=0}^{n_r}  \sum_{\ell=0}^{n_r}
\mathds{C}_r^2(k,\ell) = {\mathcal O} ( n^{2}). $
\end{lma}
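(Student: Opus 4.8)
The plan is to estimate the covariances $\mathds{C}_r(k,\ell) = \cov\big(D_{r,k}^{(u)}X, D_{r,\ell}^{(u)}X\big)$ through the integral representations established in the proof of Proposition~\ref{p61}, and then to sum those bounds over $\ell$ (for the $\max_k\sum_\ell$ part) and over $(k,\ell)$ (for the $\sum\sum\mathds{C}_r^2$ part). The two sums behave like a band/off-band dichotomy: the diagonal band $\abs{k-\ell}$ small contributes an $\mathcal{O}(1)$ number of terms each of the order of $\var(D_{r,k}^{(u)}X)$, while the off-diagonal terms decay in $\abs{k-\ell}$, and the growth exponents $\tfrac12,\tfrac34$ in $\bo$ come precisely from whether the resulting series $\sum_{d\ge 1} d^{-\kappa}$ converges.

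First I would obtain a uniform \emph{pointwise bound} on $\abs{\mathds{C}_r(k,\ell)}$. For $r=\ro+p$ ($p=1,2$) start from the expansion \eqref{e611}--\eqref{e612}: writing $\mathds{C}_r(k,\ell)$ as a double sum over $i,j$ of $b_{ikr}^{(u)}b_{j\ell r}^{(u)}(t_{k+iu}-t_k)^{\ro}(t_{\ell+ju}-t_\ell)^{\ro}$ times an integral of $\ll^{(\ro,\ro)}(\dot v_{ik},\dot w_{j\ell})$, I insert the growth bound A\ref{h21}(iii-$p$), namely $\abs{\mathds{K}^{(\ro+p,\ro+p)}(s,t)}\le D_p\abs{s-t}^{-(2p-2\bo)}$. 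Combining this with the coefficient bound \eqref{e66} and the spacing bounds \eqref{e61}--\eqref{e62} from Lemma~\ref{l62}, the factors $\dn^{-r}$ cancel against $(t_{k+iu}-t_k)^{\ro}$ up to powers of $\dn$, and for $\abs{k-\ell}$ large (so the arguments stay away from the diagonal) one gets, after the finite $i,j$ summation, a bound of the form
$$
\abs{\mathds{C}_r(k,\ell)} \le C\,\dn^{2p}\,\big(\abs{k-\ell}\dn\big)^{-(2p-2\bo)} = C'\,\abs{k-\ell}^{-(2p-2\bo)},
$$
using $n\dn\to T$; for $\abs{k-\ell}$ bounded the diagonal bound $\abs{\mathds{C}_r(k,\ell)}\le C''$ follows from the variance estimate already contained in Proposition~\ref{p61}(i). (For $\ro=0$ one runs the same argument from \eqref{e610} instead.) The decay exponent is $2p-2\bo$, which is why the three regimes in $\bo$ appear: $\sum_{d\ge1} d^{-(2p-2\bo)}$ converges iff $2p-2\bo>1$.

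With the pointwise estimate in hand, the two target sums are routine. For $\max_k\sum_\ell\abs{\mathds{C}_r(k,\ell)}$ I would split into $\abs{k-\ell}\le 1$ (an $\mathcal{O}(1)$ number of terms, each $\mathcal{O}(1)$ after renormalising by the variance order $n^{2p-2\bo}$) and $\abs{k-\ell}>1$, bounding the tail by $\sum_{d=1}^{n}d^{-(2p-2\bo)}$; this geometric-type sum gives $\mathcal{O}(n^{2p-2\bo})$ when $2p-2\bo<1$, $\mathcal{O}(n^{2p-1}\ln n)$ at the boundary $\bo=\tfrac12$, and $\mathcal{O}(n^{2p-1})$ when $2p-2\bo>1$, matching the statement. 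For $\sum_{k,\ell}\mathds{C}_r^2(k,\ell)$ I square the pointwise bound, so the decay exponent doubles to $2(2p-2\bo)$, sum over $\ell$ to get the analogous per-$k$ bound, and pick up an extra factor $n+1$ from summing over $k$; the threshold now falls at $2(2p-2\bo)=1$, i.e. $\bo=\tfrac34$, producing the three cases with the extra $+1$ in the exponent. Parts (ii) and (iii) are then the same computation with the corresponding growth/continuity inputs: for (ii) one uses A\ref{h21}(iii-2) with $p=2$ but \emph{without} exploiting the extra vanishing moment (hence the cruder exponents $n^{4-2\bo}$, $n^{9-4\bo}$), and for (iii), with $r\le\ro$, the representation \eqref{e611} with $\ra=r$ and the uniform continuity (hence boundedness) of $\ll^{(r,r)}$ gives $\abs{\mathds{C}_r(k,\ell)}=\mathcal{O}(1)$ with summable-free off-diagonal terms, yielding $\mathcal{O}(n)$ and $\mathcal{O}(n^2)$ directly.

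The main obstacle I anticipate is the uniform pointwise covariance bound near the diagonal but with $k\neq\ell$: there the singularity of $\mathds{K}^{(\ro+p,\ro+p)}$ at $s=t$ is integrated against the smooth kernels $(1-v)^{\ro-1}(1-w)^{\ro-1}$ over $[0,1]^2$, and one must check that the finite alternating $i,j$ sum of the divided-difference coefficients does not destroy the expected cancellation — in other words, that the worst case is genuinely controlled by $\abs{k-\ell}^{-(2p-2\bo)}$ uniformly in $k,\ell$ and in $i,j\in\{0,\dots,r\}$. Handling the transition region where $\abs{k-\ell}$ is comparable to $r$ (so the intervals $[t_{k},t_{k+ru}]$ and $[t_\ell,t_{\ell+ru}]$ nearly touch or overlap) requires care, since the integrand can then approach the diagonal singularity; I would absorb those finitely-many $\abs{k-\ell}$ into the diagonal $\mathcal{O}(1)$ block rather than trying to apply the decay bound there.
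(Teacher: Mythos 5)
Your overall architecture (a diagonal band plus an off-diagonal tail whose series dictates the thresholds, and part (iii) via boundedness of $\ll^{(r,r)}$) is indeed the paper's, but your central quantitative step --- the pointwise off-diagonal bound --- is wrong, and in a way your scheme cannot repair. You propose to ``insert'' the bound of A\ref{h21}-(iii-$p$) directly into the representation \eqref{e611}, whose integrand is $\ll^{(\ro,\ro)}(\dot{v}_{ik},\dot{w}_{j\ell})$; but (iii-$p$) bounds $\mathds{K}^{(\ro+p,\ro+p)}$, not $\ll^{(\ro,\ro)}$, so there is nothing to insert. The missing idea is the paper's step \eqref{e614}: for $\abs{k-\ell}\ge ur+1$, the vanishing moment $\sum_{i=0}^r b_{ikr}^{(u)}(t_{k+iu}-t_k)^{\ro}=0$ (available since $r\ge \ro+1$) lets one replace $\ll^{(\ro,\ro)}(\dot{v}_{ik},\dot{w}_{j\ell})$ by its anchored rectangular increment $\int_{t_k}^{\dot{v}_{ik}}\int_{t_\ell}^{\dot{w}_{j\ell}}\mathds{K}^{(\ro+1,\ro+1)}(s,t)\,\mathrm{d}s\,\mathrm{d}t$, and it is only to this expression that (iii-1) applies. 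Carried out correctly, this gives $\abs{\mathds{C}_r(k,\ell)}=\mathcal{O}\big(n^{2p-2\bo}\abs{k-\ell}^{-2(1-\bo)}\big)$: the decay exponent is $2(1-\bo)$ for \emph{both} $p=1$ and $p=2$ --- part (i) assumes only (iii-1), even when $r=\ro+2$ --- and all the $p$-dependence sits in the prefactor $n^{2p-2\bo}$, which your bound loses entirely (note $\dn^{2p}(\abs{k-\ell}\dn)^{-(2p-2\bo)}=\dn^{2\bo}\abs{k-\ell}^{-(2p-2\bo)}$, not a constant, so your displayed identity is not even internally consistent). The consequences are fatal for $p=2$: with your exponent $4-2\bo>1$ the series always converges, so your argument produces a single regime and cannot yield the $\bo=\frac{1}{2}$ and $\bo=\frac{3}{4}$ transitions that the lemma asserts for $p=2$ as well; moreover, obtaining your exponent for $p=2$ would require (iii-2), which part (i) does not assume. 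Even for $p=1$ your regime labels are inverted: $n^{2p-2\bo}$ is the bound in the \emph{convergent} case $2(1-\bo)>1$, i.e.\ $\bo<\frac{1}{2}$, while $n^{2p-1}$ (with the $\ln n$ factor at $\bo=\frac{1}{2}$) belongs to the divergent case $\bo\ge\frac{1}{2}$.

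Two further points. Your reading of part (ii) is backwards: it is not ``(iii-2) without the extra vanishing moment''. Precisely because $r=\ro+2$ kills moments up to order $\ro+1$, a second anchoring converts \eqref{e614} into a four-fold integral of $\mathds{K}^{(\ro+2,\ro+2)}$ (the paper's \eqref{e615}); condition (iii-2) then yields decay $\abs{k-\ell}^{-(4-2\bo)}$, summable for every $\bo\in\,]0,1[$, whence the single-regime bounds $n^{4-2\bo}$ and $n^{9-4\bo}$, which are \emph{sharper}, not cruder, than part (i) with $p=2$ as soon as $\bo\ge\frac{1}{2}$ (resp.\ $\bo\ge\frac{3}{4}$). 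Finally, the diagonal band $\abs{k-\ell}\le ur$ cannot simply be quoted from Proposition~\ref{p61}: that proposition controls quantities built on the non-centered moments $\ll$, while $\mathds{C}_r$ is a covariance, so the paper redoes the Proposition~\ref{p61} computation and invokes Cauchy--Schwarz to handle the $\mu^{(\ro)}$ terms; the band's contribution per row is $\mathcal{O}(n^{2(p-\bo)})$, not $\mathcal{O}(1)$. Part (iii) of your plan does match the paper.
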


\begin{proof}

(i) Setting $\mu(t)=\esp\big(X(t)\big)$, $\mu$ is $\ro$-times differentiable and similarly to \eqref{e610}-\eqref{e611}, we get the expansion
\begin{multline*} \mathds{C}_r(k,\ell) =  \sum_{i,j=0}^{r} b_{ikr}^{(u)}b_{j\ell r}^{(u)}
(t_{k+iu} - t_k)^{\ro} (t_{\ell+ju} - t_{\ell})^{\ro}\\ \times \intd_{[0,1]^2}\frac{((1-v)(1-w))^{\ro-1}}{((\ro -
1)!)^2}\mathds{K}^{(\ro,\ro)}(\dot{v}_{ik},\dot{w}_{j\ell}) \dv\dw
\end{multline*}
for $\ro\ge 1$ while if  $\ro=0$, $\mathds{C}_r(k,\ell) = \sum_{i=0}^{r}\sum_{j=0}^{r} b_{ikr}^{(u)}b_{j\ell r}^{(u)}
\mathds{K}(t_{k+iu},t_{\ell+ju})$.

\paragraph{\emph{Case $r=\ro+1$ or $\ro+2$}.}

For $\ro \ge 1$, we have the bound:
$$\max_{k=0,\dotsc,n_r} \sum_{\ell=0}^{n_r} \abs{\mathds{C}_r(k,\ell)} \le U_{1n}+  U_{2n} +
 U_{3n}$$ with $U_{1n}= \max\limits_{k=ur+1,\dotsc,n_r} \sum\limits_{\ell=0}^{k-ur-1}\!\!\abs{\mathds{C}_r(k,\ell)}$,
$U_{2n}= \max\limits_{k=0,\dotsc,n-2ur-1} \sum\limits_{\ell=k+ur+1}^{n_r}\!\!\abs{\mathds{C}_r(k,\ell)}$
and $U_{3n}= \max\limits_{k=0,\dotsc,n_r}\sum\limits_{\ell=\max(0,k-ur)}^{\min(n_r,k+ur)}\abs{\mathds{C}_r(k,\ell)}$. First, consider the sum $U_{1n}+U_{2n}$ where
$\abs{k-\ell}\ge ur+1$. Since  $\sum_{i=0}^{r} b_{ikr}^{(u)} (t_{k+iu} - t_k)^{\ro}=0$ for $r=\ro+1$ or $r=\ro+2$, and $[t_k,\dot{v}_{ik}]$ is distinct from $[t_{\ell},\dot{w}_{j\ell}]$,  we get
\begin{multline} \label{e614}
\mathds{C}_r(k,\ell) =\sum_{i,j=0}^{r} b_{ikr}^{(u)}b_{jkr}^{(u)} (t_{k+iu} - t_k)^{\ro}(t_{\ell+ju} - t_{\ell})^{\ro} \\ \times
\intd_{[0,1]^2} \frac{((1-v)(1-w))^{\ro-1}}{((\ro - 1)!)^2}
\int_{t_k}^{\dot{v}_{ik}}\int_{t_{\ell}}^{\dot{w}_{j\ell}}
 \mathds{K}^{(\ro+1,\ro+1)}(s,t) \,\mathrm{d}s\!\dt\!\dv\!\dw.
\end{multline}
Condition  A\ref{h21}-(iii-1), together with the bounds \eqref{e62} and \eqref{e66},  gives a bound of ${\mathcal O} (n^{2p-2\bo} \sum_{i=1}^n i^{-2(1-\bo)})$ for
 $\abs{U_{1n} +U_{2n}}$, which is of order
$n^{2(p-\bo)}$ if  $0 < \bo < \frac{1}{2}$, $n^{2(p-\bo)} \ln n$ if  $\bo = \frac{1}{2}$ and $n^{2p -1}$ if
$\bo> \frac{1}{2}$. Next, for $ U_{3n}$ where $\abs{k-\ell}\le ur$, we obtain that $U_{3n} = {\cal O} (n^{2(p-\bo)})$ in a similar way as in the proof of Proposition~\ref{p61}, and with the help of Cauchy-Schwarz inequality to control the terms depending on $\mu^{(\ro)}(t)$.

We proceed similarly for the case $\ro=0$, starting from the definition of $\mathds{C}_r(k,\ell)$ as well as for the study of  $\sum_{k=0}^{n_r}\sum_{\ell=0}^{n_r} \mathds{C}_r^2(k,\ell)$ for which dominant terms are of order ${\cal O}(n^{1+4p - 4\bo}\sum_{i=1}^n i^{-4(1 - \bo)})$.

(ii) The condition  A\ref{h21}-(iii-2) and $r=\ro+2$ allows to transform \eqref{e614} into
\begin{multline} \label{e615}
\mathds{C}_r(k,\ell) =\sum_{i,j=0}^{r} b_{ikr}^{(u)}b_{jkr}^{(u)} (t_{k+iu} - t_k)^{\ro}(t_{\ell+ju} - t_{\ell})^{\ro}  \intd_{[0,1]^2} \frac{((1-v)(1-w))^{\ro-1}}{((\ro - 1)!)^2}\\
\times\int_{t_k}^{\dot{v}_{ik}}\int_{t_{\ell}}^{\dot{w}_{j\ell}} \int_{t_k}^t \int_{t_{\ell}}^s K^{(\ro+2,\ro+2)}(y,z) \dy\dz \mathrm{d}s\dt\dv\dw
\end{multline}
which gives that  $$ \max\limits_{k=0,\dotsc,n_r}\sum_{\ell=0}^{n_r} \abs{\mathds{C}_r(k,\ell)} = {\cal O}(n^{2(2-\bo)} \sum_{i=1}^n i^{-4+2\bo}) = {\cal O}(n^{2(2-\bo)})$$ for all $\bo\in]0,1[$ and $\ro\ge 1$. From \eqref{e615}, we also get that $\sum_{k=0}^{n_r}\sum_{\ell=0}^{n_r} \mathds{C}_r^2(k,\ell) = {\cal O}(n^{9-4\bo} \sum_{i=1}^n i^{-8+4\bo}) = {\cal O}(n^{9-4\bo})$ for all $\bo\in ]0,1[$.

(iii) Results of these part, where $\ro\ge 1$, are consequences of
\begin{multline*}
\mathds{C}_r(k,\ell) = \sum_{i,j=0}^r b_{ikr}^{(u)} b_{j\ell r}^{(u)} (t_{k+iu} - t_k)^r (t_{\ell+ju} - t_{\ell})^r \\ \times\intd_{[0,1]^2} \frac{( (1-v)(1-w))^{r-1}}{(r-1)!^2} K^{(r,r)} (\dot{v}_{ik}, \dot{w}_{jl}) \dv\dw = {\cal O} (1)
\end{multline*}
with uniform continuity of $K^{(r,r)}(\cdot,\cdot)$  for $r=1,\dotsc,\ro$ together with bounds \eqref{e62} and \eqref{e66}. \end{proof}
Next proposition gives a general exponential bound, involved in  all our results.
\begin{pro}\label{p62} Suppose that Assumption~A\ref{h21} and A\ref{h22} are fulfilled. Let $\eta_{n}(r)$ be  some given
positive sequence and $u\in\bN\as$,  then $$
\mathds{P}\Big( \abs{\overline{ (D_r^{(u)} X)^2} - \esp\Big(\,\overline{ (D_r^{(u)} X)^2 }\,\Big)} \ge
\eta_n(r)\Big)$$ is of order:
\begin{multline*}{\mathcal O}\bigg( \exp\Bigl(-C(r) n\eta_{n}(r)\times  \min\Bigl( \big(\max\limits_{0\leq k \leq
n_r}\sum\limits_{\ell=0}^{n_r}\abs{\, \mathds{C}_r(k,\ell)}\big)^{-1}, \frac{n\eta_n(r)}{\sum\limits_{k,\ell=0}^{n_r}  \mathds{C}_r^2(k,\ell)}\Bigr)\Bigr)\bigg)\\+  {\mathcal O}\bigg( \frac{v_n^{1/2}(r)}{n\eta_n(r)} \exp\Big( -
C(r)\frac{n^2\eta_n^2(r)}{v_n(r)}\Big)\bigg)
\end{multline*}
for  some positive constant   $C(r)$, not depending on $\eta_n(r)$ and
\begin{equation} \label{e616} v_n(r) := n \max\limits_{k=0,\dotsc,n_r} \big( \esp(D_{r,k}^{(u)}\,X)\big)^2 \max\limits_{k=0,\dotsc,n_r} \sum\limits_{\ell=0}^{n_r}\abs{\mathds{C}_r(k,\ell)}.
\end{equation}
\end{pro}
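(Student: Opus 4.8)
The plan is to separate the randomness in $\overline{(D_r^{(u)}X)^2}$ into a centered quadratic form and a centered linear form in Gaussian variables, and to bound each piece respectively by a Bernstein-type inequality for Gaussian chaos and by a sharp Gaussian tail estimate. Writing $Y_k=D_{r,k}^{(u)}X$, $m_k=\esp Y_k=\sum_{i=0}^r b_{ikr}^{(u)}\mu(t_{k+iu})$ and $Z_k=Y_k-m_k$, the vector $(Z_k)_{0\le k\le n_r}$ is centered jointly Gaussian with covariance matrix $\Sigma=(\mathds{C}_r(k,\ell))_{0\le k,\ell\le n_r}$. Since $\sum_k Y_k^2=\sum_k m_k^2+2\sum_k m_k Z_k+\sum_k Z_k^2$ and the deterministic term cancels against its expectation, I would write
$$\overline{(D_r^{(u)}X)^2}-\esp\,\overline{(D_r^{(u)}X)^2}=\frac{1}{n_r+1}\Big(\sum_{k=0}^{n_r}Z_k^2-\esp\sum_{k=0}^{n_r}Z_k^2\Big)+\frac{2}{n_r+1}\sum_{k=0}^{n_r}m_k Z_k=:Q_n+L_n,$$
and reduce the statement, via a union bound, to bounding $\p(|Q_n|\ge\eta_n(r)/2)$ and $\p(|L_n|\ge\eta_n(r)/2)$ separately, the factor $1/2$ being absorbed into $C(r)$.

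For the quadratic part, I would diagonalise $\Sigma$, writing $\sum_k Z_k^2=\sum_i\lambda_i\xi_i^2$ with $\xi$ standard Gaussian and $(\lambda_i)$ the eigenvalues of $\Sigma$, so that $(n_r+1)Q_n=\sum_i\lambda_i(\xi_i^2-1)$. A Laplace-transform (Chernoff) argument, bounding the log-moment-generating function of $\sum_i\lambda_i(\xi_i^2-1)$ by $s^2\sum_i\lambda_i^2/(1-2s\max_i|\lambda_i|)$ for $|s|<(2\max_i|\lambda_i|)^{-1}$, yields the two-regime bound
$$\p\big(|Q_n|\ge\tfrac12\eta_n(r)\big)\le 2\exp\Big(-C(r)\min\Big(\frac{(n_r+1)^2\eta_n^2(r)}{\sum_i\lambda_i^2},\frac{(n_r+1)\eta_n(r)}{\max_i|\lambda_i|}\Big)\Big).$$
It then suffices to identify $\sum_i\lambda_i^2=\mathrm{tr}(\Sigma^2)=\sum_{k,\ell}\mathds{C}_r^2(k,\ell)$ and to bound the operator norm $\max_i|\lambda_i|=\|\Sigma\|_{\mathrm{op}}\le\max_k\sum_\ell|\mathds{C}_r(k,\ell)|$ by the maximal absolute row sum (Schur/Gershgorin); since $n_r+1\sim n$, factoring $n\eta_n(r)$ out of the minimum reproduces exactly the first term of the stated bound.

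For the linear part, $L_n$ is a centered Gaussian variable whose variance I would estimate by
$$\var(L_n)=\frac{4}{(n_r+1)^2}\sum_{k,\ell}m_k m_\ell\,\mathds{C}_r(k,\ell)\le\frac{4}{n_r+1}\max_k m_k^2\,\max_k\sum_\ell|\mathds{C}_r(k,\ell)|=\mathcal{O}\big(v_n(r)/n^2\big),$$
using $|m_k m_\ell|\le\max_k m_k^2$ and the definition \eqref{e616} of $v_n(r)$. The sharp Gaussian tail bound $\p(|\mathcal{N}(0,\sigma^2)|\ge t)\le\frac{2\sigma}{t}\exp(-t^2/(2\sigma^2))$ (Mills ratio), applied with $t=\eta_n(r)/2$ and $\sigma^2=\var(L_n)$, produces both the prefactor $v_n^{1/2}(r)/(n\eta_n(r))$ and the exponential $\exp(-C(r)n^2\eta_n^2(r)/v_n(r))$, i.e. the second term. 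Summing the two contributions gives the claim.

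The step I expect to be the main obstacle is the quadratic part: obtaining the correct two-regime minimum in the exponent and, in particular, controlling $\|\Sigma\|_{\mathrm{op}}$ by the maximal row sum $\max_k\sum_\ell|\mathds{C}_r(k,\ell)|$ rather than by a cruder quantity, so that the exponent aligns with \eqref{e616} and dovetails with the row- and square-sum estimates of Lemma~\ref{l64}. The linear part is comparatively routine once its variance is correctly tied to $v_n(r)$.
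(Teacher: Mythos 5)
Your proposal is correct and follows essentially the same route as the paper: the same splitting into a centered Gaussian quadratic form plus a centered linear form, the same spectral treatment of the quadratic part (operator norm bounded by the maximal row sum $\max_k\sum_\ell\abs{\mathds{C}_r(k,\ell)}$, Frobenius norm identified with $\sum_{k,\ell}\mathds{C}_r^2(k,\ell)$), and the same Mills-ratio tail bound for the linear part with variance controlled by $v_n(r)$. The only cosmetic differences are that you derive the two-regime chi-square concentration directly by a Chernoff argument where the paper invokes the exponential bound of Hanson and Wright, and that you diagonalise the covariance matrix $\Sigma$ directly instead of passing through the paper's orthonormal-basis construction and the equality of the nonzero spectra of $D\T D$ and $DD\T$.
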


\begin{proof}
For all $r\ge 1$, we may bound $\mathds{P}\Big( \abs{\overline{ (D_r^{(u)} X)^2} - \esp\Big(\,\overline{ (D_r^{(u)} X)^2 }\,\Big)} \ge
\eta_n(r)\Big)$ by $S_1+S_2$ with $$S_1=  \p \Big( \abs{ \sum_{k=0}^{n_r}(D_{r,k}^{(u)}\,X - \esp(
D_{r,k}^{(u)}\,X))^2 -  \var(D_{r,k}^{(u)}\,X)} > \frac{(n_r+1)\eta_n(r)}{2}\Big)$$ and $\dsp
S_2=
 \p\Big( \abs{ \sum_{k=0}^{n_r}(\esp (D_{r,k}^{(u)}\,X))\big( D_{r,k}^{(u)}\,X - \esp (D_{r,k}^{(u)}\, X)\big) } >\frac{(n_r+1)\eta_n(r)}{4}\Big)$. First, let $\{Y_i\}_{i=1,\dotsc,d_n}$ be an orthonormal basis for the linear span of  $\{ D_{r,k}^{(u)}\,X\}_{k=0,\dotsc,n_r}$ (so that $Y_i$ are i.i.d. with density ${\mathcal N}(0,1)$). We can write
$\dsp D_{r,k}^{(u)}\,X-\esp\big(D_{r,k}^{(u)}\,X\big)=\sum_{i=1}^{d_n}d_{k,i}Y_i$ with $d_{k,i} = \cov\bigl(D_{r,k}^{(u)}\,X,Y_i\bigr)$.
Next, if $Y=(Y_1,\ldots,Y_{d_n})\T$, we obtain
$$ \sum_{k=0}^{n_r} (D_{r,k}^{(u)}\,X - \esp D_{r,k}^{(u)}\,X)^2 = \sum_{i,j=1}^{d_n} c_{i,j} Y_iY_{j} =
Y\T CY \text{ and }
\sum_{k=0}^{n_r} \var(D_{r,k}^{(u)}\,X) = \sum_{i=1}^{d_n} c_{i,i}
$$
with $\dsp c_{i,j}=\sum_{k=0}^{n_r}d_{ki}d_{kj}$. Next, for $ C=\bigl( c_{i,j}\bigr)_{\substack{i=1,\dotsc,d_n\\j=1,\dotsc,d_n}}$
and $D=\bigl(d_{k,j}\bigr)_{\substack{k=0,\ldots,n_r\\j=1,\dotsc,d_n}}$, one gets $C=D\T\,D$ where  $C$ is
a real, symmetric and positive semidefinite matrix. There exists an orthogonal matrix $P$ such that ${\rm
diag}(\lambda_1,\ldots,\lambda_{d_n})=P\T CP$,  for $\lambda_i$ eigenvalues of $C$. Then we can transform the
quadratic form as:
$$
\sum_{k=0}^{n_r} (D_{r,k}^{(u)}\,X - \esp(D_{r,k}^{(u)}\,X))^2  =\sum_{i=1}^{d_n}\lambda_i(P\T Y)_i^2
$$
where $(P\T Y)_i$ denotes the $i$-th component of the  ($d_n\times 1$) vector $P\T Y$. Since
$\sum_{i=1}^{d_n} c_{i,i} = \sum_{i=1}^{d_n} \lambda_i$, we arrive
at
$$ S_1= \mathds{P}\Bigl( \,\Bigl\vert \,\sum_{i=1}^{d_n} \lambda_i \bigl( (P\T Y)_i^2 -1\bigr)
\Bigr\vert\ge \frac{(n_r+1)\eta_{n}(r)}{2}
\Bigr).$$
Now, with the exponential bound of \citet{HW71}, we obtain for some generic constant $c$:
$$
S_1 \leq
2\exp\biggl(-c(n_r+1)\eta_{n}(r)\times \min\Bigl(\frac{1}{\max (\lambda_i)},
\frac{(n_r+1)\eta_n(r)}{\sum\lambda_i^2}\Bigr)\biggr).
$$
Next, since $D\T D$ and $DD\T$ have the same non zero eigenvalues, $$
\max_{i=1,\dotsc,d_n}\lambda_i
 \le \max_{0\leq k \leq n_r}\sum_{\ell=0}^{n_r}\abs{\, \mathds{C}_r(k,\ell)}
$$ and
$
\sum\limits_{i=1}^{d_n} \lambda_i^2 = \sum\limits_{i=1}^{d_n} \sum\limits_{j=1}^{d_n} c_{ij}c_{ji} = \sum\limits_{k=0}^{n_r}\sum\limits_{\ell=0}^{n_r} (\sum\limits_{i=1}^{d_n} d_{ki}d_{li})^2
= \sum\limits_{k=0}^{n_r}\sum\limits_{\ell=0}^{n_r}  \mathds{C}_r^2(k,\ell)$. Finally $S_1$ is bounded by
$$
2\exp\biggl(-c (n_r+1)\eta_{n}(r)\times  \min\Bigl( \big(\max\limits_{0\leq k \leq
n_r}\sum\limits_{\ell=0}^{n_r}\abs{\, \mathds{C}_r(k,\ell)}\big)^{-1}, \frac{(n_r +1)\eta_n(r)}{\sum\limits_{k=0}^{n_r}\sum\limits_{\ell=0}^{n_r}  \mathds{C}_r^2(k,\ell)}\Bigr)\biggr).
$$
For $S_2$, we use the  classical exponential bound on a Gaussian variable: $Y \sim {\cal N}(0,\sigma^2)$ implies that
$\p(\abs{Y}\ge \ve) \le \min(1, \sqrt{\frac{2\sigma^2}{\pi\ve^2}}) \exp ( - \frac{\ve^2}{2\sigma^2}), \; \ve >0.$ Here $Y = \sum\limits_{k=0}^{n_r} (\esp D_{r,k}^{(u)} X) ( D_{r,k}^{(u)} X - \esp D_{r,k}^{(u)} X)$ and we get easily that $\var(Y) \le v_n(r)$.
\end{proof}

\subsection{Proofs of section~\ref{Asymptres}}

\begin{proof}\textbf{Proof of Theorem~\ref{t31}}\\
Recall that $\widehat{\ro}$ is given by: $
\widehat{\ro} =\min\Big\{r\in\{2,\dotsc,m_n\}\;\;\;:\;\;\; B_n(r) \text{ holds} \Big\}-2
$ where the event $B_n(r)$ is defined by
$\dsp B_n(r) = \big\{ \overline{
\big( D^{(1)}_r X\big)^2} \ge n^2 b_n\big\}$, and $\widehat{\ro} = \ell_0$ if $\cap_{r=2}^{m_n} B_n(r)$. The condition $m_n \to\infty$ guarantees  that for $n$ large enough, $\ro+2\in\{2,\dotsc,m_n\}$. From this definition, we write
$$
\esp\big(\widehat{\ro} - \ro)^2 = \sum_{r=0}^{m_n -2} (r -\ro)^2\p\big(\widehat{\ro} = r\big) + (l_0 - \ro)^2 \p\big( \widehat{\ro}=l_0\big)
$$
where $\p\big(\widehat{\ro} = 0\big)  = \p\big(B_n(2) \big)$, $
\p\big(\widehat{\ro} = r\big) = \p\big(B_n^c(2)\cap \dotsb \cap B_n^c(r+1) \cap B_n(r+2) \big)$ if $r=1,\dotsc,m_n-2$,  and
$\p\big(\widehat{\ro} = l_0\big)  \le  \p\big( B_n^c(\ro+2)\big)$. Then, for all $\ro\in\n_0$:
$
\esp\big(\widehat{\ro} -\ro \big)^2 = {\mathcal O} \big( T_{1n}(\ro)\big) + {\mathcal O} \big(m_n^3 T_{2n}(\ro)\big)
$
where we have set $T_{1n}(0) = 0$,
$T_{1n}(\ro) = \sum\limits_{r=2}^{\ro+1} \p\big(B_n(r)\big) \text{ (for } \ro\ge 1)$ and $ T_{2n}(\ro) = \p\big( B_n^c(\ro+2) \big)$. Now, the study of $T_{1n}$ and $T_{2n}$ is derived from results of Lemma~\ref{l62}, Lemma~\ref{l63}, Proposition~\ref{p61} and Lemma~\ref{l64}. In particular, since $\mu\in C^{\ro+1}([0,T])$ we get:
$$
\esp (D_{r,k}^{(u)} X) = \sum_{i=0}^r b_{ikr}{(u)} (t_{k+iu}-t_k)^{\ra} \int_0^1 \frac{(1-v)^{\ra-1}}{(\ra - 1)!} \mu^{(\ra)} (t_k + (t_{k+iu} - t_k)v) \dv
$$
which is  ${\cal O} (n^{r -\ra})$ for $\ra=\min(r,\ro+1)$ implying that $\esp (D_{r,k}^{(u)} X)= {\cal O}(1)$ for $r=1,\dotsc,\ro+1$, $\esp (D_{r,k}^{(u)} X)= {\cal O}(n)$ for $r=\ro+2$. Then one may bound $v_n(r)$ given in equation \eqref{e616} by  ${\cal O}(n^2)$  if $r=1,\dotsc,\ro$,
${\cal O} \big(n^{3-2\bo}\indi_{]0,\frac{1}{2}[}(\bo) + n^2\ln n\indi_{\{\frac{1}{2}\}}(\bo) + n^2\indi_{]\frac{1}{2},1[}(\bo)\big)$   if  $r=\ro+1$  with   A\ref{h21}-(iii-1), ${\cal O} \big(n^{7-2\bo}\indi_{]0,\frac{1}{2}[}(\bo) + n^6\ln n\indi_{\{\frac{1}{2}\}}(\bo) + n^6\indi_{]\frac{1}{2},1[}(\bo)\big)$  if  $r=\ro+2$ with  A\ref{h21}-(iii-1), and ${\cal O} (n^{7-2\bo})$ if  $r=\ro+2$ and  A\ref{h21}-(iii-2) holds. Next after some calculations based on properties $n^{2\bo}b_n\to \infty$ and $n^{-2(1-\bo)}b_n\to 0$, one may derive from Proposition \ref{p62}  that:
$$
T_{1n}(\ro) =  {\mathcal O} \Big(\exp\Big(- D(\ro) b_n \big(n^{2\bo+1}\indi_{]0,\frac{1}{2}[}(\bo) + \big(\frac{n^2}{\ln n}\big) \indi_{\{\frac{1}{2}\}}(\bo) + n^2\indi_{]\frac{1}{2},1[}(\bo)\big)\Big) \Big).$$
Next, if  A\ref{h21}-(iii-1) holds
$$T_{2n}(\ro) = O \bigg( \exp\Big( - D(\ro) \big(n\indi_{]0,\frac{1}{2}[}(\bo) + \big(\frac{n}{\ln n}\big)\indi_{\{\frac{1}{2}\}}(\bo) + n^{2(1-\bo)}\indi_{]\frac{1}{2},1[}(\bo) \big)\Big) \bigg)$$
while, under A\ref{h21}-(iii-2) and for all $\bo\in]0,1[$, $T_{2n}(\ro) =O \big( \exp( - D(\ro) n) \big)$. For $p=1,2$, we get that $T_{1n}(\ro) = o\big( T_{2n} (\ro)\big)$ and the mean square error follows. Finally, to obtain a bound for $\p(\widehat{\ro} \not=\ro)$, it suffices to notice that $\{\widehat{\ro} = 0\} = B_n(2)$ for $\ro=0$ and $\{\widehat{\ro} = \ro\} = B_n^c(2) \cap \dotsb \cap B_n^c(\ro+1) \cap B_n(\ro+2)$ for $\ro \ge 1$, by this way
$\p(\widehat{\ro} \not= \ro) = T_{1n}(\ro) + T_{2n}(\ro)= T_{2n}(\ro)(1+o(1))$.
\end{proof}

\begin{proof}\textbf{Proof of Theorem~\ref{t32}}\\
We start the proof, with either $p=1$ or $p=2$, and thus denote by $\widehat{r_{\!p}}$  (resp. $r_{\!p}$) the
quantity $\widehat{\ro}+p$ (resp. $\ro+p$). We set
\begin{multline}\label{e617}
l_n(p,\ro,\bo)= -\frac{1}{2n} \sum_{k=0}^{n}d_0(t_k)\psi^{2(p-\bo)}(t_k) \sum_{i,j=0}^{r_{\!p}} \frac{(ij)^{\ro}}{\prod_{\substack{m=0\\m\not=i}}^{r_{\!p}} (i-m)\prod_{\substack{q=0\\q\not=j}}^{r_{\!p}} (j-q)}
 \\  \times \intd_{[0,1]^2} \frac{((1-v)(1-w))^{\ro -1}}{(\ro - 1)!^2}\abs{iv-jw}^{2\bo}  \dv\!\dw,
\end{multline}
for all $\ro \ge 1$ while if $\ro=0$,
\begin{equation} \label{e618}
l_n(p,0,\bo) = -\frac{1}{2n} \sum_{k=0}^{n} d_0(t_k)\psi^{2(p-\bo)}(t_k)\sum_{i,j=0}^{r_{\!p}}
\frac{ \abs{i-j}^{2\bo}}{\prod_{\substack{m=0\\m\not=i}}^{p} (i-m)\prod_{\substack{q=0\\q\not=j}}^{p} (j-q)}.
\end{equation}
We study the convergence of $\widehat{\alpha}_p = 2(\widehat{\beta}_n^{(p)} -p)$ toward $\alpha_p = 2(\bo -p)$, so that
$$ \widehat{\alpha}_p = \frac{\ln \big(  \overline{(D_{\widehat{r_{\!p}}}^{(u)} X)^2}\big) -  \ln\big(\overline{(D_{\widehat{r_{\!p}}}^{(v)} X)^2}\big)}{\ln(u/v)}.$$
We consider the following decomposition of $\ln(u/v)\widehat{\alpha}_p$:
\begin{multline*}
\ln\big(\frac{n^{\alpha_p}}{n-u\widehat{r}_p+1} \sum_{k=0}^{n-u\widehat{r}_p}
\big( D_{\widehat{r_{\!p}},k}^{(u)}\,X\big)^2-u^{\alpha_p}l_n(p,\ro,\bo)+u^{\alpha_p}l_n(p,\ro,\bo)\big)\\
-\ln\big( \frac{n^{\alpha_p}}{n-v\widehat{r_{\!p}}+1} \sum_{k=0}^{n-v\widehat{r_{\!p}}} \big(
D_{\widehat{r_{\!p}},k}^{(v)}\,X\big)^2-v^{\alpha_p}l_n(p,\ro,\bo)+v^{\alpha_p}l_n(p,\ro,\bo)\big)
\end{multline*}

\noindent Hence $\ln(u/v)(\widehat{\alpha}_p-\alpha_p)=F_n(u)-F_n(v)+o(F_n(u)+F_n(v))$
where $o(\cdot)\tv[n\to\infty]{a.s.} 0$ as soon as $F_n(\cdot)\tv[n\to\infty]{a.s.} 0$ with
$$F_n(u)=\frac{n^{\alpha_p} \overline{\big(
D_{\widehat{r_{\!p}}}^{(u)}X\big)^2}-u^{\alpha_p}l_n(p,\ro,\bo)}{u^{\alpha_p}l_n(p,\ro,\bo)}=
\frac{F_{1,n,p}(u)+F_{2,n,p}(u)+F_{3,n,p}(u)}{u^{\alpha_p}l_n(p,\ro,\bo)}
$$
for $\dsp  F_{1,n,p}(u) = n^{\alpha_p} \Big(\overline{\big( D_{\widehat{r_{\!p}}}^{(u)}X\big)^2}-\overline{ \big( D_{r_{\!p}}^{(u )}X\big)^2}\Big)$, $\dsp F_{2,n,p}(u) = n^{\alpha_p}\big( \overline{ \big( D_{r_{\!p}}^{(u)} X\big)^2}- \esp\overline{\big( D_{r_{\!p}}^{(u)}X\big)^2} \big)$ and
$\dsp  F_{3,n,p}(u) = n^{\alpha_p} \esp \Big(\,\overline{\big( D_{r_{\!p}}^{(u)}X\big)^2}\,\Big) - u^{\alpha_p} \, l_n(p,\ro,\bo)$.

\paragraph{{\bf (i) \emph{Study of}} $\dsp F_{1,n,p}(u)$.}

From Theorem~\ref{t31}, we get that $\sum\limits_n\p({\widehat\ro}\not= \ro ) < \infty$, so, a.s. for $n$ large enough, $\widehat{\ro} = \ro$  and $F_{1,n,p}(u)\equiv 0$, $p=1$ or $p=2$.

\paragraph{{\bf (ii) \emph{Study of $F_{2,n,p}(u)$.}}}

We  study $$\p \big(  \abs{ \overline{ \big( D_{r_{\!p}}^{(u)} X\big)^2}- \esp\overline{\big( D_{r_{\!p}}^{(u)}X\big)^2}   } > c_p n^{2(p-\bo)} \psi_{np}^{-1}(\bo)\big)$$ for $c_p$ a positive constant, $\psi_{n2}(\bo) \equiv \big(\frac{n}{\ln n}\big)^{\frac{1}{2}}$ and
$$
\psi_{n1}(\bo) = \big(\frac{
n}{\ln n}\big)^{\frac{1}{2}} \indi_{]0,\frac{3}{4}[}(\bo) +\big(\frac{n^{1/2}}{\ln n}\big)\indi_{\{\frac{3}{4}\}}(\bo)+
 \big(\frac{n^{2(1-\bo)}}{\ln n}\big)\indi_{]\frac{3}{4},1[}(\bo).
$$
We apply Lemma~\ref{l64} and Proposition~\ref{p62} with $p=1$ or $p=2$. After some calculations and the application of Borel Cantelli's lemma with $c_p$ chosen large enough, we  obtain that for $p=1$, almost surely,
$
\varlimsup\limits_{n\to\infty} \psi_{np}(\bo) \abs{F_{2,n,p}(u)} < +\infty
$ under the condition~ A\ref{h21}-(iii-$p$), where $p=1$ or $2$.

\paragraph{\bf (iii)  \emph{Study of} $F_{3,n,p}(u)$.}

From  \eqref{e611} and proceeding similarly as in \eqref{e612}, we get for $\ro \ge 1$, that $n^{\beta_1} \big(n^{\alpha_p}\esp\overline{\big(D^{(u)}_{r_{\!p}} X \big)^2} - u^{\alpha_p} \, l_n(p,\ro,\bo) \big)$ could be decomposed into $B_{n1}+ B_{n2} + B_{n3}$ with
\begin{multline*}
B_{n1}=  -\frac{n^{\alpha_p + \bun}}{2(n-ur_p+1)} \sum_{k=0}^{n-ur_p} \sum_{i,j=0}^{r_{\!p}} b_{ikr}^{(u)} b_{jkr}^{(u)} (t_{k+iu} - t_k)^{\ro} (t_{k+ju} - t_k)^{\ro} \\ \times \intd_{[0,1]^2} \frac{((1-v)(1-w))^{\ro -1}}{(\ro - 1)!^2}   \abs{\dot{v}_{ik} - \dot{w}_{jk}}^{2\bo +\bun} \\ \times
\bigg\{ \frac{\frac{\ll^{(\ro,\ro)}(\dot{v}_{ik},\dot{w}_{jk}) -\frac12 \ll^{(\ro,\ro)}(\dot{v}_{ik},\dot{v}_{ik}) -\frac12
\ll^{(\ro,\ro)}(\dot{w}_{jk},\dot{w}_{jk})}{\abs{\dot{v}_{ik} - \dot{w}_{jk}}^{2\bo}}-d_0(\dot{w}_{jk})}{\abs{\dot{v}_{ik} - \dot{w}_{jk}}^{\bun}} - d_1(\dot{w}_{jk}) \bigg\} \dv\dw
\end{multline*}
\begin{multline*}
B_{n2} =  -\frac{n^{\alpha_p + \bun}}{2(n-ur_p+1)} \sum_{k=0}^{n-ur_p} \sum_{i,j=0}^{r_{\!p}} b_{ikr}^{(u)} b_{jkr}^{(u)} (t_{k+iu} - t_k)^{\ro} (t_{k+ju} - t_k)^{\ro} \\ \times \intd_{[0,1]^2} \frac{((1-v)(1-w))^{\ro -1}}{(\ro - 1)!^2}   \abs{\dot{v}_{ik} - \dot{w}_{jk}}^{2\bo+\bun} d_1(\dot{w}_{jk}),
\end{multline*}
\begin{multline*}
B_{n3}=  n^{\bun}\Big( \frac{-n^{\alpha_p}}{2(n-ur_p+1)} \sum_{k=0}^{n-ur_p} \sum_{i,j=0}^{r_{\!p}} b_{ikr}^{(u)} b_{jkr}^{(u)} (t_{k+iu} - t_k)^{\ro} (t_{k+ju} - t_k)^{\ro} \\ \times \intd_{[0,1]^2} \frac{((1-v)(1-w))^{\ro -1}}{(\ro - 1)!^2}   \abs{\dot{v}_{ik} - \dot{w}_{jk}}^{2\bo }  d_0(\dot{w}_{jk}) \dv\dw - u^{\alpha_p} l_n(p,\ro,\bo)\Big)
\end{multline*}
with $l_n(p,\ro,\bo)$ given by \eqref{e617}. Next, using Lemma~\ref{l62} and \ref{l63} and  the condition \eqref{e31} with uniform continuity of $d_1(\cdot)$, we get that $B_{n1} = o(1)$ and $B_{n2}$ has the limit:
\begin{multline*}
 -\frac{u^{\alpha_p+\bun}}{2}  \sum_{i,j=0}^{r_{\!p}} \frac{(ij)^{\ro} \int_0^T d_1(t) \psi^{1-\alpha_p -\bun}(t) \dt}{\prod_{\substack{m=0\\m\not=i}}^{r_{\!p}}(i-m)\prod_{\substack{q=0\\q\not=j}}^{r_{\!p}}(j-q)}
\\\times \intd_{[0,1]^2} \frac{\big((1-v)(1-w)\big)^{\ro-1}}{((\ro - 1)!)^2}
\abs{iv -jw}^{2\bo+\bun} \dv\!\dw.
\end{multline*}
For the last term $B_{n3}$, one may show that it is of order ${\cal O} (n^{\bun -1})$. Finally, the case $\ro=0$ is treated similarly from \eqref{e610}.

\paragraph{\bf Conclusion.} One may note that the determinist term, $l_n(p,\ro,\bo)$, defined in \eqref{e617}-\eqref{e618}, converges to the nonzero term:
$$
 -\frac{1}{2} \sum_{i,j=0}^{r_{\!p}}\frac{ (ij)^{\ro}\int_0^T d_0(t) \psi^{-\alpha_p+1}(t)\dt}{\prod\limits_{\substack{m=0\\m\not=i}}^{r_{\!p}}(i-m)\prod\limits_{\substack{q=0\\q\not=j}}^{r_{\!p}}(j-q)}
  \intd_{[0,1]^2}\!\!\!\!\!\!\frac{((1-v)(1-w))^{\ro -1}}{(\ro - 1)!^2}\abs{iv-jw}^{2\bo}\!\!\dv\!\dw$$
for $\ro \ge 1$ while if $\ro=0$, the limit is
$ -\frac{1}{2} \sum_{i,j=0}^{p}
\frac{\abs{i-j}^{2\bo}\int_0^T d_0(t) \psi^{-\alpha_p+1}(t)\dt}{\prod_{\substack{m=0\\m\not=i}}^{p}(i-m)\prod_{\substack{q=0\\q\not=j}}^{p}(j-q)}$.
\end{proof}

\subsection{Proofs of section~\ref{AppInt}}
\begin{proof}\textbf{Proof of Theorem~\ref{t41}}\\
We set $\widetilde{\ro} = \max(\widehat{\ro},1)$ and, for $\widehat{\ro}$ and $\wt{X}_r(\cdot)$  respectively defined in \eqref{e25} and \eqref{e41}, we  use the convention:~ $ \wt{X}_{\widetilde{\ro}}(\cdot)= \wt{X}_{m_n-1}(\cdot)$ and $\wt{X}_{\widehat{\ro}+1}(\cdot)= \wt{X}_{m_n}(\cdot)$ when $\widehat{\ro} = l_0$.

(a) If $\rr = \max(r,1)$ and $\overline{\ro}=\max(\ro,1)$, we get, for $n$  large enough such that $\ro \le m_n -2$,
\begin{align*}
\big(X(t) -  \wt{X}_{\widetilde{\ro}}(t) \big)^2
&=\sum_{r=0}^{m_n -2} \big(X(t) -  \wt{X}_{\rr}(t) \big)^2\indi_{\{\widehat{\ro}=r\}}+ \big(X(t) -  \wt{X}_{m_n-1}(t) \big)^2\indi_{\{\widehat{\ro}=l_0\}}
\\
&\le  \big(X(t) -  \wt{X}_{\overline{\ro}}(t) \big)^2+ \indi_{\{\widehat{\ro}\not=\ro\}}\sum_{r=0,r\not=\ro}^{m_n -1} \big(X(t) -  \wt{X}_{\rr}(t) \big)^2.
\end{align*}
By this way, $e_{\rho}^2 ({\mathrm{app}}\big(\widehat{\ro})\big)$ should be bounded by
$$
\int_0^T \!\!\!\esp \big(X(t) -  \wt{X}_{\overline{\ro}}(t) \big)^2 \rho(t)\dt+ \big(\p(\widehat{\ro} \not=\ro)\big)^{\frac{1}{2}} \!\!\!\sum_{r=0,r\not=\ro}^{m_n -1}\!\!\int_0^T \!\! \Big(\esp\big(X(t) -  \wt{X}_{\rr}(t) \big)^4\Big)^{\frac{1}{2}} \rho(t)\dt
$$
We make use of the exponential bound established for $\p(\widehat{\ro} \not=\ro)$ in Theorem~\ref{t31} as well as the property $\esp(Y^4) \le 3 \big(\esp(Y^2)\big)^2$ for a Gaussian r.v. $Y$. Moreover, $\dsp
\sup_{t\in[0,T]}\Big(\esp\big(X(t) -  \wt{X}_{r}(t)\big)^2\Big) = \max\limits_{k=0,\dotsc,\lfloor \frac{n}{r}\rfloor - 1} \sup\limits_{t\in {\cal I}_k} \Big(\esp\big(X(t) -  \wt{X}_{r}(t)\big)^2\Big)$. If $\ro \ge 1$, we use the decomposition established in \citet[][lemma 4.1]{BV08} to obtain, for $t\in{\cal I}_k$ and $r^{\ast} = \min(r,\ro)$:
\begin{multline*}
\esp\big(X(t) -  \wt{X}_{r}(t)\big)^2 = \sum_{i,j=0}^{r} L_{i,k,r}(t) L_{j,k,r}(t) \frac{(t_{kr+i}-t_{kr})^{\ra}(t_{kr+j}-t_{kr})^{\ra}}{((\ra-1)!)^2} \\\times \intd_{[0,1]^2} \big((1-v)(1-w)\big)^{\ra -1}  \Big\{ \ll^{(\ra,\ra)}(t_{kr}+ (t-t_{kr})v,t_{kr} + (t-t_{kr})w) \\- \ll^{(\ra,\ra)}(t_{kr}+ (t-t_{kr})v,t_{kr} + (t_{kr+j}-t_{kr}) w) \\- \ll^{(\ra,\ra)}(t_{kr}+ (t_{kr+i} - t_{kr}) v, t_{kr} + (t-t_{kr})w)\\ + \ll^{(\ra,\ra)}(t_{kr}+ (t_{kr+i}  -t_{kr}) v,t_{kr} + (t_{kr+j} - t_{kr})w) \Big\} \dv\!\dw.
\end{multline*}
If $r=1,\dotsc,\ro-1$, $(\ro \ge 2)$, we obtain the uniform bound ${\cal O} \big( \dn^{2r+2} \big)$ by uniform  continuity of $\ll^{(r+1,r+1)} (\cdot,\cdot)$ and results of Lemma~\ref{l62}.  For $r=\ro,\dotsc,m_n$, we have $r^{\ast} = \ro$ so we apply the H\"{o}lderian regularity condition \eqref{e69}. Since $L_{i,k,r}(t) \le r^r$, we arrive at $\dsp \sup_{t\in[0,T]}\esp\big(X(t) -  \wt{X}_{\overline{\ro}}(t)\big)^2  = {\cal O}\big(\dn^{2(\ro+\bo)}\big)$ for $r=\ro$ while if  $r=\ro+1,\dotsc,m_n$, $\dsp \sup_{t\in[0,T]}\esp\big(X(t) -  \wt{X}_{\rr}(t)\big)^2  = {\cal O}\big(m_n^{2(m_n+\ro+\bo)} \dn^{2(\ro+\bo)}\big)$. The logarithmic order of $m_n$ yields the final result. In the case where $\ro = 0$, above results hold true starting from \begin{multline*}
 \esp\big(X(t) -  \wt{X}_{\rr}(t)\big)^2 = \sum_{i,j=0}^{\rr} L_{i,k,\rr}(t) L_{j,k,\rr}(t)\Big\{ \ll(t,t) - \ll(t,t_{k\rr+j})  \\- \ll(t_{k\rr+i},t) + \ll(t_{k\rr+i},t_{k\rr+j})   \Big\}.\end{multline*}

(b) For $e_{\rho}^2 ({\rm{int}}\big(\widehat{\ro})\big)$,  $\int_0^T \big( X(t) -\widetilde{X}_{r+1}\big)\rho(t)\dt$ is again a Gaussian variable, so in a similar way as for approximation, we get the following bound for this term:
\begin{multline*}
\sqrt{3}\big(\p(\widehat{\ro} \not=\ro)\big)^{\frac{1}{2}} \sum_{r=0}^{m_n}
\Big(\sup_{t\in[0,T]}\Big(\esp\big(X(t) -  \wt{X}_{r+1}(t)\big)^2\Big)^{\frac{1}{2}}\Big)^2 \big(\int_0^T  \rho(t)\dt\big)^2
\\+
 \sum_{k=0}^{\lfloor \frac{n}{\ro+1}\rfloor-1}\sum_{\ell=0}^{\lfloor \frac{n}{\ro+1}\rfloor-1} \int_{{\cal I}_k}\int_{{\cal I}_{\ell}}\esp \big(X(t) -  \wt{X}_{\ro+1}(t) \big) \big(X(s) -  \wt{X}_{\ro+1}(s) \big)\rho(t)\rho(s)\,\mathrm{d}s\!\dt.
\end{multline*}

\paragraph{Study of the term $\dsp\esp \big(X(t) -  \wt{X}_{\ro+1}(t) \big) \big(X(s) -  \wt{X}_{\ro+1}(s) \big) $, $(s,t)\in {\cal I}_{\ell}\times {\cal I}_k$.}

Denoting $\overline{r}=\ro+1$ we get again from lemma 4.1 of \citet{BV08} that $\esp \big(X(t) -  \wt{X}_{\overline{r}}(t) \big) \big(X(s) -  \wt{X}_{\overline{r}}(s) \big)$ is equal to:
{\small \begin{multline*}
\sum_{i,j=0}^{\overline{r}} L_{i,k,\overline{r}}(t)  L_{j,\ell,\overline{r}}(s)   \frac{((t_{k \overline{r} +i}-t_{k \overline{r} })(t_{\ell \overline{r} +j}-t_{\ell \overline{r} }))^{\ro}}{((\ro-1)!)^2} \intd_{[0,1]^2}\!\!\! \!\!\!\dv\!\dw\, ((1-v)(1-w))^{\ro-1}\\ \times
\Big\{
\ll^{(\ro,\ro)} (t_{k \overline{r} }+(t-t_{k \overline{r} })v, t_{\ell \overline{r} }+(t-t_{\ell \overline{r} })w)
-\ll^{(\ro,\ro)} (t_{k \overline{r} }+(t-t_{k \overline{r} })v, t_{\ell \overline{r} }+ (t_{\ell \overline{r} +j} - t_{\ell \overline{r} } )w)
\\- \ll^{(\ro,\ro)} (t_{k \overline{r} }+ (t_{k \overline{r} +i} - t_{k \overline{r} } )v, t_{\ell \overline{r} }+(t-t_{\ell \overline{r} })w)
+ \ll^{(\ro,\ro)} (t_{k \overline{r} }+ (t_{k \overline{r} +i} - t_{k \overline{r} } )v, t_{\ell \overline{r} }+(t-t_{\ell \overline{r} })w)\Big\}.
\end{multline*}}
For non-overlapping intervals ${\cal I}_k$ and ${\cal I}_{\ell}$, that is $\abs{k-l}\ge 2$, we make use of Condition A\ref{h22}(2) four times, by adding and subtracting the necessary terms,  noting that
\begin{multline*}
\sum_{i,j=0}^{ \overline{r} } L_{i,k, \overline{r} }(t)L_{j,\ell, \overline{r} }(s) (t_{k \overline{r} +i} - t_{k \overline{r} })^{r_1}(t_{\ell \overline{r} +j} - t_{\ell  \overline{r} })^{r_2}= (t-t_{k \overline{r} })^{ r_1}(s- t_{\ell  \overline{r} })^{ r_2}.
\end{multline*}
with either $r_i= \overline{r} -1$ or $r_i = \overline{r} $ for $i=1,2$. By this way, we get
\begin{multline*}
\sum_{\stackrel{k,\ell=0}{\abs{k-\ell}\ge 2}}^{\lfloor \frac{n}{\overline{r}}\rfloor-1} \int_{{\cal I}_k}\int_{{\cal I}_{\ell}}\esp \big(X(t) -  \wt{X}_{\overline{r}}(t) \big) \big(X(s) -  \wt{X}_{\overline{r}}(s) \big)\rho(t)\rho(s)\,\mathrm{d}s\!\dt\\ = {\cal O}\Big(\dn^{2(\ro+\bo+1)}\sum_{\stackrel{k,\ell=0}{\abs{k-\ell}\ge 2}}^{\lfloor \frac{n}{\overline{r}}\rfloor-1} \big\lvert\abs{k-\ell}-1\big\rvert^{-2(2-\bo)}  \Big)
\end{multline*}
which is a ${\cal O}\Big(\dn^{2(\ro+\bo)+1}\Big)$. For overlapping intervals ${\cal I}_k$ and ${\cal I}_{\ell}$, that is in the case where $\abs{k-l} \le 1$, we make use of Cauchy-Schwarz inequality to obtain the same bound as above. Since the second part of  $e_{\rho}^2 (\rm{int}\big(\widehat{\ro})\big) $ is  negligible, we obtain the  result.
\end{proof}

\bibliographystyle{chicago}
\bibliography{bv2014}

\begin{thebibliography}{}

\bibitem[\protect\citeauthoryear{Adler}{Adler}{1981}]{Ad81}
Adler, R.~J. (1981).
\newblock {\em The geometry of random fields}.
\newblock Wiley, New-York.

\bibitem[\protect\citeauthoryear{Adler}{Adler}{1990}]{Ad90}
Adler, R.~J. (1990).
\newblock {\em An introduction to continuity, extrema, and related topics for
  general {G}aussian processes}.
\newblock Institute of Mathematical Statistics Lecture Notes---Monograph
  Series, 12. Hayward, CA: Institute of Mathematical Statistics.

\bibitem[\protect\citeauthoryear{Benhenni}{Benhenni}{1998}]{Be98}
Benhenni, K. (1998).
\newblock Approximating integrals of stochastic processes: extensions.
\newblock {\em J. Appl. Probab.\/}~{\em 35\/}(4), 843--855.

\bibitem[\protect\citeauthoryear{Benhenni and Cambanis}{Benhenni and
  Cambanis}{1992}]{BC92}
Benhenni, K. and S.~Cambanis (1992).
\newblock Sampling designs for estimating integrals of stochastic processes.
\newblock {\em Ann. Statist.\/}~{\em 20\/}(1), 161--194.

\bibitem[\protect\citeauthoryear{Berman}{Berman}{1974}]{Ber74}
Berman, S. (1974).
\newblock Sojourns and extremes of {G}aussian processes.
\newblock {\em Ann. Probab.\/}~{\em 2}, 999--1026 (Corrections (1980), 8, 999
  and (1984) 12, 281).

\bibitem[\protect\citeauthoryear{Blanke and Vial}{Blanke and Vial}{2008}]{BV08}
Blanke, D. and C.~Vial (2008).
\newblock Assessing the number of mean-square derivatives of a {G}aussian
  process.
\newblock {\em Stochastic Process. Appl.\/}~{\em 118\/}(10), 1852--1869.

\bibitem[\protect\citeauthoryear{Blanke and Vial}{Blanke and Vial}{2011}]{BV11}
Blanke, D. and C.~Vial (2011).
\newblock Estimating the order of mean-square derivatives with quadratic
  variations.
\newblock {\em Stat. Inference Stoch. Process.\/}~{\em 14\/}(1), 85--99.

\bibitem[\protect\citeauthoryear{Blanke and Vial}{Blanke and Vial}{2012}]{BV12}
Blanke, D. and C.~Vial (2012).
\newblock On estimation of regularity for {G}aussian processes.
\newblock {\em Preprint ar{X}iv\/}~{\em 1211.2763\/}(November), 34 pages.
\newblock http://arxiv.org/pdf/1211.2763v1.

\bibitem[\protect\citeauthoryear{Brown, Fearn, and Vannucci}{Brown
  et~al.}{2001}]{BFV01}
Brown, P.~J., T.~Fearn, and M.~Vannucci (2001).
\newblock Bayesian wavelet regression on curves with application to a
  spectroscopic calibration problem.
\newblock {\em J. Amer. Statist. Assoc.\/}~{\em 96\/}(454), 398--408.

\bibitem[\protect\citeauthoryear{Bucklew}{Bucklew}{1985}]{Bu85}
Bucklew, J.~A. (1985).
\newblock A note on the prediction error for small time lags into the future.
\newblock {\em IEEE Trans. Inform. Theory\/}~{\em 31\/}(5), 677--679.

\bibitem[\protect\citeauthoryear{Cambanis}{Cambanis}{1985}]{Ca85}
Cambanis, S. (1985).
\newblock Sampling designs for time series.
\newblock In {\em Time series in the time domain}, Volume~5 of {\em Handbook of
  Statist.}, pp.\  337--362. Amsterdam: North-Holland.

\bibitem[\protect\citeauthoryear{Chan, Hall, and Poskitt}{Chan
  et~al.}{1995}]{CHP95}
Chan, G., P.~Hall, and D.~Poskitt (1995).
\newblock Periodogram{-}based estimators of fractal properties.
\newblock {\em Ann. Statist.\/}~{\em 23\/}(5), 1684--1711.

\bibitem[\protect\citeauthoryear{Constantine and Hall}{Constantine and
  Hall}{1994}]{CH94}
Constantine, A.~G. and P.~Hall (1994).
\newblock Characterizing surface smoothness via estimation of effective fractal
  dimension.
\newblock {\em J. Roy. Statist. Soc., Ser. B\/}~{\em 56\/}(1), 97--113.

\bibitem[\protect\citeauthoryear{Cressie}{Cressie}{1993}]{Cr93}
Cressie, N.~A.~C. (1993).
\newblock {\em Statistics for spatial data}.
\newblock New-York: Wiley.

\bibitem[\protect\citeauthoryear{Cuzick}{Cuzick}{1977}]{Cu77}
Cuzick, J. (1977).
\newblock A lower bound for the prediction error of stationary {G}aussian
  processes.
\newblock {\em Indiana Univ. Math. J.\/}~{\em 26\/}(3), 577--584.

\bibitem[\protect\citeauthoryear{Feuerverger, Hall, and Wood}{Feuerverger
  et~al.}{1994}]{FHW94}
Feuerverger, A., P.~Hall, and A.~Wood (1994).
\newblock Estimation of fractal index and fractal dimension of a {G}aussian
  process by counting the number of level crossing.
\newblock {\em J. Time Ser. Anal.\/}~{\em 15\/}(6), 587--606.

\bibitem[\protect\citeauthoryear{Gilbert}{Gilbert}{1987}]{Gi87}
Gilbert, R.~O. (1987).
\newblock {\em Statistical methods for environmental pollution monitoring}.
\newblock New York: Van Nostrand-Reinhold.

\bibitem[\protect\citeauthoryear{Gneiting, {\v{S}}ev{\v{c}}{\'{\i}}kov{\'a},
  and Percival}{Gneiting et~al.}{2012}]{Gneit2012}
Gneiting, T., H.~{\v{S}}ev{\v{c}}{\'{\i}}kov{\'a}, and D.~B. Percival (2012).
\newblock Estimators of fractal dimension: assessing the roughness of time
  series and spatial data.
\newblock {\em Statist. Sci.\/}~{\em 27\/}(2), 247--277.

\bibitem[\protect\citeauthoryear{Hall and Roy}{Hall and Roy}{1994}]{HR94}
Hall, P. and R.~Roy (1994).
\newblock On the relationship between fractal dimension and fractal index for
  stationary stochastic processes.
\newblock {\em Ann. Appl. Probab.\/}~{\em 4\/}(1), 241--253.

\bibitem[\protect\citeauthoryear{Hanson and Wright}{Hanson and
  Wright}{1971}]{HW71}
Hanson, D.~L. and F.~T. Wright (1971).
\newblock A bound on tail probabilities for quadratic forms in independent
  random variables.
\newblock {\em Ann. Math. Statist.\/}~{\em 42\/}(3), 1079--1083.

\bibitem[\protect\citeauthoryear{Istas}{Istas}{1992}]{Is92}
Istas, J. (1992).
\newblock Wavelet coefficients of a {G}aussian process and applications.
\newblock {\em Ann. Inst. H. Poincar\'e Probab. Statist.\/}~{\em 28\/}(4),
  537--556.

\bibitem[\protect\citeauthoryear{Istas and Lang}{Istas and Lang}{1997}]{IL97}
Istas, J. and G.~Lang (1997).
\newblock Quadratic variations and estimation of the local h\"{o}lder index of
  a {G}aussian process.
\newblock {\em Ann. Inst. H. Poincar{\'{e}}, Probab. Statist.\/}~{\em 33\/}(4),
  407--436.

\bibitem[\protect\citeauthoryear{Istas and Laredo}{Istas and
  Laredo}{1997}]{IL97b}
Istas, J. and C.~Laredo (1997).
\newblock Estimating functionals of a stochastic process.
\newblock {\em Adv. in Appl. Probab.\/}~{\em 29\/}(1), 249--270.

\bibitem[\protect\citeauthoryear{Kent and Wood}{Kent and Wood}{1997}]{KW97}
Kent, J.~T. and A.~T. Wood (1997).
\newblock Estimating the fractal dimension of a locally self-similar {G}aussian
  process by using increments.
\newblock {\em J. Roy. Statist. Soc., Ser. B\/}~{\em 59\/}(3), 679--699.

\bibitem[\protect\citeauthoryear{Laslett}{Laslett}{1994}]{La94}
Laslett, G.~M. (1994).
\newblock Kriging and splines: an empirical comparison of their predictive
  performance in some applications.
\newblock {\em J. Amer. Statist. Assoc.\/}~{\em 89\/}(426), 391--409.
\newblock With comments and a rejoinder by the author.

\bibitem[\protect\citeauthoryear{Lindgren}{Lindgren}{1979}]{Li79}
Lindgren, G. (1979).
\newblock Prediction of level crossings for normal processes containing
  deterministic components.
\newblock {\em Adv. in Appl. Probab.\/}~{\em 11\/}(1), 93--117.

\bibitem[\protect\citeauthoryear{M{\"u}ller-Gronbach}{M{\"u}ller-Gronbach}{1996}]{MG96}
M{\"u}ller-Gronbach, T. (1996).
\newblock Optimal designs for approximating the path of a stochastic process.
\newblock {\em J. Statist. Plann. Inference\/}~{\em 49\/}(3), 371--385.

\bibitem[\protect\citeauthoryear{M{\"u}ller-Gronbach and
  Ritter}{M{\"u}ller-Gronbach and Ritter}{1997}]{MGR97}
M{\"u}ller-Gronbach, T. and K.~Ritter (1997).
\newblock Uniform reconstruction of {G}aussian processes.
\newblock {\em Stochastic Process. Appl.\/}~{\em 69\/}(1), 55--70.

\bibitem[\protect\citeauthoryear{M{\"u}ller-Gronbach and
  Ritter}{M{\"u}ller-Gronbach and Ritter}{1998}]{MGR98}
M{\"u}ller-Gronbach, T. and K.~Ritter (1998).
\newblock Spatial adaption for predicting random functions.
\newblock {\em Ann. Statist.\/}~{\em 26\/}(6), 2264--2288.

\bibitem[\protect\citeauthoryear{Plaskota, Ritter, and Wasilkowski}{Plaskota
  et~al.}{2004}]{PRW04}
Plaskota, L., K.~Ritter, and G.~Wasilkowski (2004).
\newblock Optimal designs for weighted approximation and integration of
  stochastic processes on $[0,\infty($.
\newblock {\em J. Complexity\/}~{\em 20\/}(1), 108--131.

\bibitem[\protect\citeauthoryear{{R Core Team}}{{R Core Team}}{2012}]{R12}
{R Core Team} (2012).
\newblock {\em R: A Language and Environment for Statistical Computing}.
\newblock Vienna, Austria: R Foundation for Statistical Computing.

\bibitem[\protect\citeauthoryear{Ritter}{Ritter}{1996}]{Ri96}
Ritter, K. (1996).
\newblock Asymptotic optimality of regular sequence designs.
\newblock {\em Ann. Statist.\/}~{\em 24\/}(5), 2081--2096.

\bibitem[\protect\citeauthoryear{Ritter}{Ritter}{2000}]{Rit00}
Ritter, K. (2000).
\newblock {\em Average-case analysis of numerical problems}.
\newblock {Lecture Notes in Mathematics, {1733}}. Springer.

\bibitem[\protect\citeauthoryear{Sacks and Ylvisaker}{Sacks and
  Ylvisaker}{1968}]{SY68}
Sacks, J. and D.~Ylvisaker (1968).
\newblock Designs for regression problems with correlated errors; many
  parameters.
\newblock {\em Ann. Math. Statist.\/}~{\em 39}, 49--69.

\bibitem[\protect\citeauthoryear{Sacks and Ylvisaker}{Sacks and
  Ylvisaker}{1970}]{SY70}
Sacks, J. and D.~Ylvisaker (1970).
\newblock Designs for regression problems with correlated errors. {III}.
\newblock {\em Ann. Math. Statist.\/}~{\em 41}, 2057--2074.

\bibitem[\protect\citeauthoryear{Seleznjev}{Seleznjev}{1996}]{Se96}
Seleznjev, O. (1996).
\newblock Large deviations in the piecewise linear approximation of {G}aussian
  processes with stationary increments.
\newblock {\em Adv. in Appl. Probab.\/}~{\em 28\/}(2), 481--499.

\bibitem[\protect\citeauthoryear{Seleznjev}{Seleznjev}{2000}]{Se00}
Seleznjev, O. (2000).
\newblock Spline approximation of random processes and design problems.
\newblock {\em J. Statist. Plann. Inference\/}~{\em 84\/}(1-2), 249--262.

\bibitem[\protect\citeauthoryear{Seleznjev and Buslaev}{Seleznjev and
  Buslaev}{1998}]{SB98}
Seleznjev, O. and A.~Buslaev (1998).
\newblock Best approximation for classes of random processes.
\newblock Technical Report 13, 14 p., Univ. Lund Research Report.
\newblock http://mech.math.msu.su/$\sim$seleznev/bestapp.ps.

\bibitem[\protect\citeauthoryear{Stein}{Stein}{1995}]{St95}
Stein, M.~L. (1995).
\newblock Predicting integrals of stochastic processes.
\newblock {\em Ann. Appl. Probab.\/}~{\em 5\/}(1), 158--170.

\bibitem[\protect\citeauthoryear{Stein}{Stein}{1999}]{St00}
Stein, M.~L. (1999).
\newblock {\em Interpolation of spatial data}.
\newblock Springer Series in Statistics. New York: Springer-Verlag.
\newblock Some theory for Kriging.

\bibitem[\protect\citeauthoryear{Taylor and Taylor}{Taylor and
  Taylor}{1991}]{TT91}
Taylor, C.~C. and S.~J. Taylor (1991).
\newblock Estimating the dimension of a fractal.
\newblock {\em J. Roy. Statist. Soc. Ser. B\/}~{\em 53\/}(2), 353--364.

\bibitem[\protect\citeauthoryear{Tsai and Chan}{Tsai and Chan}{2000}]{TC00}
Tsai, H. and K.~S. Chan (2000).
\newblock A note on the covariance structure of a continuous-time process.
\newblock {\em Statist. Sinica\/}~{\em 10}, 989--998.

\bibitem[\protect\citeauthoryear{Wood and Chan}{Wood and Chan}{1994}]{WC94}
Wood, A.~T. and G.~Chan (1994).
\newblock Simulation of stationary {G}aussian processes in {$[0,1]^d$}.
\newblock {\em J. Comput. Graph. Statist.\/}~{\em 3\/}(4), 409--432.

\end{thebibliography}
\end{document}